\sloppy \pagestyle{plain}
\newcounter{cequation}[section]
\newtheorem{theorem}[cequation]{Theorem}
\newtheorem*{theorem*}{Theorem}
\newtheorem{lemma}[cequation]{Lemma}
\newtheorem{corollary}[cequation]{Corollary}
\newtheorem{conjecture}[cequation]{Conjecture}
\newtheorem{question}[cequation]{Question}
\newtheorem{proposition}[cequation]{Proposition}
\newtheorem{problem}[cequation]{Problem}
\newtheorem{lemma-definition}[cequation]{Lemma-Definition}
\theoremstyle{definition}
\newtheorem{example}[cequation]{Example}
\newtheorem{definition}[cequation]{Definition}
\newtheorem*{definition*}{Definition}
\theoremstyle{remark}
\newtheorem{remark}[cequation]{Remark}
\makeatletter\@addtoreset{equation}{section}
\def \WW {\mathcal{W}}
\def \CC {\mathcal{C}}
\def \MM {\mathcal{M}}
\def \QQ {\mathcal{Q}}
\def \P {\mathbb{P}}
\def \C {\mathbb{C}}
\def \TT {\mathbb{T}}
\def \CC {\mathbb{C}}
\def \Z {\mathbb{Z}}
\def \ZZ {\mathbb{Z}}
\def \Aff {\mathbb{A}}
\def \Ar {\mathrm{Ar}}
\def \Ver {\mathrm{Ver}}
\def \In {\mathrm{In}}
\def \wt {\mathrm{wt}}
\def \Res {\mathrm{Res}\,}
\def \G {\mathrm{Gr}}
\newcommand{\pic}{\mathrm{Pic}\,}
\newcommand{\Spec}{\mathrm{Spec}\,}
\newcommand{\arrow}[2]{\langle #1\to #2 \rangle}
\def \ge {\geqslant}
\def \le {\leqslant}
\title{Laurent phenomenon for Landau--Ginzburg models of complete intersections in Grassmannians of planes
%. With an appendix by Thomas Prince
}
\author{Victor Przyjalkowski, Constantin Shramov}
\thanks{
%This work was performed in Steklov Mathematical Institute and supported by the Russian Science Foundation under grant 14-50-00005.
This work is supported by the Russian Science Foundation under grant 14-50-00005.
%Both authors were partially supported by
%grants
%RFFI grant 14-01-00160, NSh-2998.2014.1, AG Laboratory NRU HSE,
%RF government grant, ag.  11.G34.31.0023, and Dynasty foundation.
%V.P. was partially supported by grant MK-696.2014.1.
%C.S. was partially supported by grant MK-2858.2014.1.
}
\address{
Steklov Mathematical Institute of Russian Academy of Sciences, 8 Gubkina st., Moscow 119991, Russia%,\newline
%\phantom{La}Laboratory of Algebraic Geometry\footnote{???}, GU-HSE, 7 Vavilova st., Moscow 117312,
%Russia
}\email{victorprz@mi.ras.ru, costya.shramov@gmail.com}
\begin{document}

\begin{abstract}
In a spirit of Givental's constructions
Batyrev, Ciocan-Fontanine, Kim, and van Straten
suggested Landau--Ginzburg models for smooth Fano complete
intersections in Grassmannians and partial flag varieties
as certain complete intersections in complex tori
equipped with special functions called superpotentials.
We provide a particular algorithm
for constructing birational isomorphisms of these models
for complete intersections in Grassmannians of planes
with complex tori. In this case the superpotentials are
given by Laurent polynomials. %\footnote{which provides a series of tools to study them.}
We study Givental's integrals for
Landau--Ginzburg models suggested by
Batyrev, Ciocan-Fontanine, Kim, and van Straten
and show that they are periods for
pencils of fibers of maps provided by Laurent polynomials we obtain.
The algorithm we provide after minor modifications
can be applied in a more general context.
%In the appendix written by Thomas Prince one more algorithm is presented.
%It heavily uses the specifics of Grassmannians of planes
%but much simpler then the main algorithm.

%Mirror symmetry partners for Fano varieties are Landau--Ginzburg models.
%There are several points of view on what Landau--Ginzburg models are,
%varying from physical to homological. However, it often happens, say for toric varieties, that examples of
%Landau--Ginzburg models are given in Laurent polynomial form. We use the notion of
%toric Landau--Ginzburg model for a Fano variety $Y$ --- a specific Laurent polynomial
%related to symplectic and deformation properties of $Y$. The natural question is:
%do all Fano varieties have Landau--Ginzburg models of this type? The positive answer to this question is known
%in a lot of cases such as  smooth Fano threefolds. We show that this is true for Fano
%complete intersections in Grassmannians of planes. We use for this a description of
%their Landau--Ginzburg models as complete intersections in tori given by
%Batyrev, Ciocan-Fontanine, Kim, and van~Straten. This description generalizes Givental's
%construction for complete intersections in smooth toric varieties.
%We expect that our method of proof
%can be generalized to complete intersections in any Grassmannian or, more generally, partial flag varieties.
%We also discuss Givental's period integrals for complete intersections in projective spaces or Grassmannians of planes.
\end{abstract}

\maketitle

\tableofcontents

\section{Introduction}
Mirror Symmetry declares duality between algebraic and symplectic geometries of different varieties.
Starting from duality between Calabi--Yau varieties it was extended to Fano varieties (see e.\,g.~\cite{Ko94}).
In this case the dual object to a Fano variety is called a Landau--Ginzburg model. Its definition varies
depending on a version of Mirror Symmetry conjectures.
In what follows mirror partners for us are smooth Fano varieties and their toric Landau--Ginzburg models.

The challenge for Mirror Symmetry is to find mirror partners for given varieties and, via studying them, explore
(or check or at least guess) geometry of the initial varieties. In the paper we mostly focus on the problem of finding Landau--Ginzburg models.
Historically constructions of Landau--Ginzburg models, initiated by Givental (\cite{Gi97b}),
Eguchi, Hori and Xiong (\cite{EHX97}),
Batyrev (\cite{Ba97}), Batyrev, Ciocan-Fontanine, Kim and van~Straten (\cite{BCFKS98}), Hori and Vafa~(\cite{HV00}),
and other people, were based on a toric approach. The idea is, given a smooth toric Fano variety or a variety having a (nice) toric degeneration,
to construct a Laurent polynomial whose support is a fan polytope of either a smooth toric variety or a central fiber of a toric degeneration. This idea is initiated by
Batyrev--Borisov approach to treating mirror duality for a toric variety as a classical duality of toric varieties corresponding to dual polytopes.
The constructed Laurent polynomial gives a map from a complex torus to an affine line and, thus, defines a Landau--Ginzburg model.
We do not discuss methods of finding appropriate Laurent polynomial in a general case; see~\cite{Ba97},~\cite{Prz13},~\cite{CCGGK12} for details.

In~\cite{Gi97b} (see discussion after Corollary~0.4 therein) Givental suggested an approach to writing down Landau--Ginzburg models for complete intersections in toric varieties
or varieties having nice (say, terminal Gorenstein) toric degenerations (see also~\mbox{\cite[\S7.2]{HV00}}). This approach assumes an existence of a \emph{nef-partition}
of the set of rays $\mathscr{E}$ of the toric variety's fan.
That is, for each hypersurface that defines the complete intersection a subset of $\mathscr{E}$ should be fixed,
such that the sum of divisors corresponding to rays in the subset is linearly equivalent to this hypersurface,
and all such subsets are disjoint (see Definition~\ref{def:GLG}).

In some cases this construction is described in details
(see e.\,g.~\cite{BCFKS98} for complete intersections in (partial) flag varieties,~\mbox{\cite[\S7.2]{HV00}} for complete intersections,~\cite{DHKLP} for Fano threefolds). A priori the output of the construction is a quasiaffine variety with a complex-valued function (called \emph{superpotential}) on it. However in many cases such quasiaffine varieties are birational to tori and thus
the functions on them (under some additional assumptions) are toric Landau--Ginzburg models (which is very useful for studying them), see, for instance,~\cite{Prz10}.
We can't prove this phenomenon in the general case: there is no approach to construct (or even prove an existence of) a nef-partition.
In some cases, however, the nef-partition is described.
Landau--Ginzburg models for Grassmannians themselves were suggested in~\mbox{\cite[B25]{EHX97}} as Laurent polynomials, i.\,e. functions on tori already.
In ~\cite{BCFKS97} and~\cite{BCFKS98} the description for Landau--Ginzburg models for complete intersections in Grassmannians and partial flag
varieties
as complete intersections in tori are given.
However it is not clear if they themselves are birational to tori (cf.~\cite[Problem 16]{Prz13}).
We prove the phenomenon in the case of complete intersections of Grassmannians of planes.

A basic and the most important property of Landau--Ginzburg models from the point of view discussed above is a \emph{period condition}.
It relates (some of) the periods of a Landau--Ginzburg model for a Fano variety with its \emph{$I$-series}, a generating series for one-pointed
Gromov--Witten invariants. These series for Grassmannians are found in~\cite{BCK03}.
Moreover, Quantum Lefschetz Theorem (see for instance~\cite[Theorem 0.1]{Gi97b},~\cite{Ki00},~\cite{Lee01}) relates
$I$-series of a Fano variety and a complete intersection therein with nef anticanonical class. Thus this series is known for complete intersections in Grassmannians as well.
Periods for these complete intersections are proven to be related with their $I$-series in~\cite{BCFKS98} modulo the assumption that
this holds for Grassmannians themselves. This assumption is proven for Grassmannians
of planes in~\mbox{\cite[Proposition~3.5]{BCK03}} and for all
Grassmannians in~\cite{MR13}. Thus the period condition holds for Landau--Ginzburg models for smooth Fano complete intersections in Grassmannians
suggested in~\cite{BCFKS97}.

\emph{A weak Landau--Ginzburg model} is a Laurent polynomial for which the period condition holds.
The stronger notion, a notion of \emph{toric Landau--Ginzburg model}, requires two more conditions.
%Another condition for getting toric Landau--Ginzburg model is
\emph{A Calabi--Yau condition} states an existence of a relative compactification
of the family that is a (non-compact) Calabi--Yau variety.
\emph{A toric condition} states that a Newton polytope of the Laurent polynomial is a fan polytope of a toric degeneration of the Fano variety.

\begin{theorem}[{Corollary~\ref{corollary:main}}]
\label{theorem: premain}
Any smooth Fano complete intersection in a Grassmannian of planes has a weak Landau--Ginzburg model.
\end{theorem}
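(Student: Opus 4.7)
The plan is to begin with the Landau--Ginzburg model supplied by the Batyrev--Ciocan-Fontanine--Kim--van~Straten construction. For a smooth Fano complete intersection $X\subset\G(2,n)$ specified by a nef-partition, the output of~\cite{BCFKS97,BCFKS98} is a quasiaffine variety $Y$, realised as a complete intersection inside a complex torus $T$ by a specific system of binomial equations arising from the Pl\"ucker-type toric degeneration of $\G(2,n)$, together with a superpotential $w\colon Y\to\A^1$. By~\cite{BCFKS98} combined with~\cite[Proposition~3.5]{BCK03}, the Givental-type period integral of $w$ coincides, as a series in the deformation parameter, with the $I$-series of $X$. So the task reduces to turning $(Y,w)$ into a genuine Laurent polynomial on a torus without destroying this period identity.

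First I would construct an explicit birational isomorphism $\varphi\colon T'\dashrightarrow Y$ from a torus $T'$ of dimension $\dim Y$. The idea is to exploit the rigid combinatorial structure of the nef-partitions coming from $\G(2,n)$: in each defining equation of $Y$ one selects a distinguished monomial, solves for one torus coordinate as a Laurent expression in the remaining ones, and substitutes into the other equations. Iterating this procedure, which is the ``algorithm'' promised in the abstract, eliminates the defining equations one at a time and terminates with a Laurent parametrisation of an open subset of $Y$ by $T'$. The key verification is that at each step the chosen monomial does not vanish on the partial solution locus, so that the eliminated variable really is a Laurent polynomial in the surviving coordinates.

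Next I would transport $w$ through $\varphi$: since the substitutions are of Laurent monomial type, $f:=w\circ\varphi$ is automatically a Laurent polynomial on $T'$. Because $\varphi$ is a birational isomorphism onto its image and the canonical holomorphic volume form on $Y$ pulls back to a monomial multiple of the canonical form on $T'$, the Givental-type period of $w$ on $Y$ equals the standard constant-term period of $f$ on $T'$. Combined with the identity recalled in the first paragraph, this gives the period condition for $f$, so $f$ is a very weak Landau--Ginzburg model for $X$.

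The principal obstacle is the elimination step: one has to check that the algorithm can be carried out uniformly across every nef-partition attached to every numerical type of smooth Fano complete intersection in every $\G(2,n)$. This amounts to a combinatorial analysis of which coordinate to eliminate at each stage and a proof that the resulting substitution never forces the Laurent parametrisation to degenerate onto a proper closed subvariety of $Y$. Once this is in place, writing down $f$ explicitly and invoking the existing Givental-type period theorem for complete intersections in Grassmannians of planes are essentially formal, and the statement follows.
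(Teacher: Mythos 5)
Your overall strategy is the same as the paper's: start from the Batyrev--Ciocan-Fontanine--Kim--van~Straten model, build an explicit birational map from a torus by eliminating one variable per defining equation, and then check that the Givental integral becomes the constant-term period of the resulting Laurent polynomial. However, there is a genuine gap at the central step. You assert that the substitutions are ``of Laurent monomial type,'' so that $f=w\circ\varphi$ is \emph{automatically} a Laurent polynomial. This is false, and it is precisely where all the work lies. Each equation $F_{\QQ,V,R,B}=1$ is linear in the eliminated (``main'') variable, and solving it produces a \emph{rational}, not monomial, expression such as $a=\frac{M'\,\delta'}{M' a_{s-1,1}'-P'}$; substituting this naively into the superpotential gives a rational function with a nontrivial denominator. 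The paper gets around this by a two-stage change of variables for each block: first a monomial rescaling by powers of a carefully chosen \emph{weight variable}, then a non-monomial change of that weight variable (e.g.\ $a_{s-1,1}''=(M'a_{s-1,1}'-P')/M'$), after which the main variable becomes Laurent in the new coordinates. Proving that the \emph{entire} superpotential, not just the eliminated variable, stays Laurent after each such step requires the inductive bookkeeping of Lemmas~\ref{lemma:horizontal-block-G2-start}--\ref{lemma:vertical-block-G2} (the conditions on $R(k,3)$, the total-degree and $\Lambda$-degree constraints, and the specific choice of blocks in the proof of Theorem~\ref{theorem: main}). Without an argument of this kind your ``key verification'' paragraph is a restatement of the problem, not a proof; indeed the paper observes that different block types (basic horizontal versus longer horizontal, mixed, vertical) need genuinely different treatments, so there is no uniform one-line elimination.

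A second, smaller gap: the period comparison is also not formal, because the changes of variables are not monomial, so the logarithmic form does not simply pull back to a monomial multiple of itself. The paper handles this in Proposition~\ref{proposition: periods} by rewriting the integrand after each change, taking residues along the hypersurfaces $1-F_{\QQ,V,R,B}=U$, and checking that the cycle recovered at the end is homologous to the standard torus cycle $\{|a_{i,j}|=\varepsilon_{i,j}\}$. This is routine once the explicit changes of variables of Theorem~\ref{theorem: main} are in hand (and the paper says as much in Section~\ref{section:discussion}), but it does depend on having those explicit formulas, so it cannot be decoupled from the missing elimination argument.
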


The particular form of this weak Landau--Ginzburg model can be derived from Theorem~\ref{theorem: main}.
In the papers~\cite{Pr16} and~\cite{Psh15b} other algorithms
to transform Landau--Ginzburg models suggested
in~\cite{BCFKS97} to weak Landau--Ginzburg models are presented.
They use completely different approaches inspired by~\cite{CKP14} and gives a shorter way to obtain
Theorem~\ref{theorem: main}.

\begin{conjecture}
\label{conjecture: Grassmannians}
The assertion of Theorem~\ref{theorem: premain}
holds for complete intersections in any Grassmannian or, more generally, partial flag variety.
\end{conjecture}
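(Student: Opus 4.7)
The plan is to mimic the strategy used in the Grassmannian of planes case but to adapt it to the combinatorial complexity of a general partial flag variety. The starting point is the Batyrev--Ciocan-Fontanine--Kim--van~Straten Landau--Ginzburg model, which for a smooth Fano complete intersection in a partial flag variety $X$ is presented as a complete intersection $Y$ in a torus $T$ together with a superpotential $w$. The period condition for these models already holds in full generality by~\cite{MR13} combined with the argument of~\cite{BCFKS98}, so one only needs to exhibit a birational isomorphism $Y\dashrightarrow T'$ onto a torus $T'$ such that $w$ pulls back to a Laurent polynomial on $T'$.

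First, I would analyze the nef-partition attached to the toric (Gelfand--Tsetlin or string) degeneration of the partial flag variety and unravel the combinatorial data --- the quiver with potential, or equivalently the ``cluster seed'' --- underlying the BCFKS construction. In the $\G(2,n)$ case the equations cutting out $Y$ in $T$ have a triangular shape that allows one to solve for certain variables in terms of the remaining ones, and the key point is that this substitution leaves the superpotential Laurent. The plan is to produce an analogous elimination order for $\G(k,n)$ or, more generally, for the flag variety $\mathrm{Fl}(n_1,\ldots,n_r;n)$: pick a distinguished vertex in each box of the nef-partition and solve the corresponding equation for the variable attached to that vertex. One would proceed step by step, from the ``outer'' rows of the Young diagram inward, and prove by induction on the partition shape that each elimination introduces only monomials already present in the torus coordinates of the remaining subsystem.

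The second step would be to verify that after all eliminations the superpotential is a genuine Laurent polynomial, not just a rational function. In the plane case this is essentially automatic because the only denominators that appear are monomials in coordinates one has not yet eliminated. For higher $k$ the same principle should apply provided the elimination order is chosen compatibly with a ``Laurent phenomenon'' of cluster-algebra type for the subrings in question; indeed, the title of the paper suggests that the authors view the phenomenon precisely through this lens. An appeal to the known Laurent phenomenon for double Bruhat cells (Berenstein--Fomin--Zelevinsky) and its compatibility with the Gelfand--Tsetlin toric degeneration should give this for free in many, perhaps all, cases.

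The hard part, and where I would expect to spend most of the effort, is the combinatorial step of finding the correct elimination order and proving that it always terminates in a Laurent polynomial. A naive induction on $k$ or on the number of blocks of the flag is likely to fail because the structure of the BCFKS equations becomes genuinely two-dimensional once $k\geq 3$, and one may need to pass through intermediate mutations of the cluster seed rather than a direct substitution. An alternative route, suggested by the appendix of Thomas Prince, would be to construct the Laurent polynomial directly from a Minkowski decomposition of the fan polytope of a small toric degeneration of $X$, matching it with the BCFKS superpotential via the period condition. Either way, the final check that the output is a \emph{very weak} Landau--Ginzburg model amounts to showing the period matches the $I$-series, and this reduces, as in the body of the paper, to the corresponding statement for the ambient flag variety proved in~\cite{MR13}.
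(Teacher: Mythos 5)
The statement you are asked to prove is, in the paper, only a \emph{conjecture}: the authors give no proof of it, and explicitly remark that it was established later, by methods different from those of the paper, in the works cited as~\cite{DH15} and~\cite{PSh15b}. So there is no ``paper proof'' to match your argument against; the relevant comparison is whether your text itself constitutes a proof, and it does not. What you have written is a research programme. Its core step --- producing an elimination order for the BCFKS equations of a general $\G(k,n)$ or partial flag variety and proving that each elimination keeps the superpotential Laurent --- is exactly the open content of the conjecture, and you yourself concede that the naive induction ``is likely to fail'' once $k\ge 3$ and that one ``may need'' cluster mutations or a Minkowski-decomposition construction instead. Appealing to the Laurent phenomenon for double Bruhat cells ``should give this for free in many, perhaps all, cases'' is not an argument: no identification of the BCFKS complete intersection with a cluster chart is set up, no compatibility of the nef-partition equations with cluster mutation is proved, and the paper's own experience (Sections~6--8) shows that even for $n=2$ the Laurentness requires delicate, case-by-case degree bookkeeping ($\Lambda$-degrees, block histories) rather than a formal cluster argument.

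A second gap, independent of Laurentness, is the period verification. Even if you produce a Laurent superpotential, the conclusion ``very weak Landau--Ginzburg model'' requires showing that the birational change of variables transforms Givental's integral into the main period of the Laurent polynomial; in the paper this is the content of Proposition~9.4, which tracks cycles and residues through every single substitution (cf.\ the $U$-deformed equations in Remarks~6.2, 6.5, 6.7, 7.2, 7.4, 8.2). Your proposal compresses this to ``this reduces, as in the body of the paper, to the corresponding statement for the ambient flag variety proved in~\cite{MR13}'', but~\cite{MR13} only supplies the equality of Givental's integral with the regularized $I$-series; the compatibility of your (unspecified) substitutions with the integral must still be proved and is not automatic. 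In short: the ingredients you list are the right ones, but both decisive steps are missing, so the proposal does not prove the conjecture.
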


This conjecture is proved by methods different from ones used in this paper for complete intersections in Grassmannians and
for a big class of complete intersections in partial flag varieties in~\cite{DH15} and for complete intersections in Grassmannians in~\cite{PSh15b}.

Theorem~\ref{theorem: premain} is one more evidence for the following conjecture, that may be regarded as
a strong version of Mirror Symmetry of variations of Hodge structures conjecture, cf.~\cite[Conjecture 38]{Prz13}.

\begin{conjecture}
Any smooth Fano variety has a toric Landau--Ginzburg model. %provided it has a toric degeneration.
\end{conjecture}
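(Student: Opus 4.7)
This is a very broad open conjecture whose unconditional resolution is far beyond what current techniques reach; the plan is therefore to outline a program that reduces it to well-identified structural questions and then to apply, case by case, the kind of analysis carried out in this paper. The first step is, for a given smooth Fano variety $X$, to produce a terminal Gorenstein toric degeneration $X \rightsquigarrow X_0$. Its fan polytope $\Delta$ is reflexive and serves as the candidate Newton polytope for a Laurent polynomial $f$ on an algebraic torus. When $X$ is presented as a complete intersection inside a toric variety or inside a Grassmannian or partial flag variety, one then looks for a nef-partition of the rays of the relevant fan compatible with the defining data and, following the Givental-type constructions of~\cite{Gi97b},~\cite{BCFKS98} and~\mbox{\cite[\S7.2]{HV00}}, writes down a Landau--Ginzburg model as a complete intersection in a torus equipped with a superpotential.

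The second step is to eliminate the auxiliary equations so as to exhibit this quasiaffine Landau--Ginzburg model as birational to a torus, turning the superpotential into an honest Laurent polynomial $f$. This is precisely the Laurent phenomenon proved in Theorem~\ref{theorem: premain} for complete intersections in Grassmannians of planes, predicted by Conjecture~\ref{conjecture: Grassmannians} for complete intersections in arbitrary Grassmannians and partial flag varieties (and established in~\cite{DH15},~\cite{PSh15b} by different methods), and expected more broadly. Once $f$ is at hand, the three defining conditions of a toric Landau--Ginzburg model must be verified: the period condition, matching the constant-term series of $f$ with the $I$-series of $X$ via Quantum Lefschetz and $I$-series computations of the sort given in~\cite{BCK03} and~\cite{MR13}; the Calabi--Yau condition, requiring a relative compactification of the pencil $f\colon (\C^*)^{\dim X} \to \A^1$ whose total space is a (non-compact) Calabi--Yau variety; and the toric condition, that the Newton polytope of $f$ coincides with the fan polytope of the chosen toric degeneration $X_0$.

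The main obstacle is already present in the very first step: the existence of a terminal Gorenstein toric degeneration of an arbitrary smooth Fano variety is itself a deep open problem, with no general construction available and only a scattered collection of positive results for homogeneous spaces, Fano varieties of low dimension, and various special classes. Because the combinatorial verification of each of the three conditions above tends to be ad hoc, a realistic strategy is not to seek a uniform argument but to settle the conjecture one class at a time --- complete intersections in Grassmannians and partial flag varieties, Fano threefolds and fourfolds, varieties with nice GIT or cluster structures, and so on --- accumulating evidence of the sort that Theorem~\ref{theorem: premain} and Prince's alternative construction in the appendix provide.
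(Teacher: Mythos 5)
The statement you are addressing is not proved in the paper at all: it is stated as a conjecture (a strong form of Mirror Symmetry of variations of Hodge structures, going back to~\cite[Conjecture 38]{Prz13}), and the paper only supplies evidence for it, namely Theorem~\ref{theorem: premain} for complete intersections in Grassmannians of planes. Your text is honest about this, but it is a research program rather than a proof, and as such it contains no argument that could close the statement. Every load-bearing step is itself an open problem: the existence of a terminal Gorenstein (or any suitable) toric degeneration of an arbitrary smooth Fano variety is precisely the content of one of the weaker conjectures listed in the same section, so invoking it as ``the first step'' is circular; the existence of a nef-partition and the Laurent phenomenon beyond the cases treated in this paper and in~\cite{DH15},~\cite{PSh15b} is Conjecture~\ref{conjecture: Grassmannians} and its generalizations, not an available tool; and even granting a very weak Landau--Ginzburg model, the Calabi--Yau and toric conditions are exactly what Problems~\ref{problem: CY} and~\ref{problem:smoothing} ask to be established, which the paper explicitly leaves open even for Grassmannians of planes. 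A ``case by case'' accumulation of examples, however valuable, cannot yield the universally quantified statement.

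So the proposal should not be presented as a proof of the conjecture. If your goal is to contribute something verifiable within the framework of this paper, the realistic targets are the ones the paper isolates: extend Theorem~\ref{theorem: main} and Proposition~\ref{proposition: periods} to complete intersections in arbitrary Grassmannians or partial flag varieties (Problem~\ref{problem: all Grassmannians}), or upgrade the very weak Landau--Ginzburg models obtained here to weak or toric ones by solving Problems~\ref{problem: CY} and~\ref{problem:smoothing} in concrete cases. Each of these requires a genuine construction or computation (explicit birational changes of variables preserving periods, explicit Calabi--Yau compactifications, explicit toric degenerations), none of which appears in your outline.
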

%
%In particular, we have the following weaker conjectures.
%
%\begin{conjecture}
%Any smooth Fano variety has a toric degeneration.
%\end{conjecture}
%
%\begin{conjecture}
%Any smooth Fano variety has a rational Landau--Ginzburg model of the same dimension.
%\end{conjecture}

Constructions of weak and toric Landau--Ginzburg models give information about toric degenerations
of Fano varieties and enable one to make effective computations.
In addition they can be considered as a first step of our approach to studying Mirror Symmetry.
From the Homological Mirror Symmetry point of view, a Landau--Ginzburg model  is a family of compact varieties over~$\Aff^1$.
A crucial role in its construction is played by singularities of fibers, so compactness of fibers is needed to guarantee
that all singularities we need are ``visible'' on the Landau--Ginzburg model.
A natural way to get a family of compact varieties is to construct
a Calabi--Yau compactification. Compactification Principle (see~\cite[Principle 32]{Prz13}) states that this
compactification gives Landau--Ginzburg models for Homological Mirror Symmetry.
Such Calabi--Yau compactifications are constructed for Fano threefolds (see~\cite{Prz13} and~\cite{Prz16}) and complete
intersections (see~\cite{Prz13},~\cite{PSh13}, and~\cite{Prz18}). Other constructions of relatively compact Landau--Ginzburg models for Grassmannians, not using weak Landau--Ginzburg models, can be found  in~\cite{MR13}.
A challenging problem is to compactify weak Landau--Ginzburg models provided by Theorem~\ref{theorem: premain}.

\medskip

The paper is organized as follows. In Section~\ref{section:toric-LG} we give definitions of toric Landau--Ginzburg models and $I$-series
for Fano varieties.
In Section~\ref{section:toric} we give definitions of Givental's period integrals and Givental's Landau--Ginzburg models for complete intersections
in smooth toric varieties.
They are defined via nef-partitions and relations between rays of a fan defining the toric variety. Also, in Section~\ref{section:toric}
we show how changing variables one can simplify Givental's integrals and Landau--Ginzburg models and get rid of the part depending
on the relations.

Givental's approach can be applied in a more general case. That is one can define Landau--Ginzburg models and period integrals
of smooth Fano complete intersections in Fano varieties having ``good'', say terminal Gorenstein, toric degenerations.
To do this one applies the Givental's method to a certain complete intersection in a crepant resolution of the toric degeneration
of the latter Fano variety and then taking a specializations of some parameters.
Following~\cite{BCFKS98} we show how it works for Grassmannians of planes in Section~\ref{section: ci in Grassmainans}.
In Section~\ref{section: main theorem} we reformulate notions given in Section~\ref{section: ci in Grassmainans}
in a more abstract way suitable for the proof of our main assertions, and formulate Theorem~\ref{theorem: main}
that is a more technical counterpart of Theorem~\ref{theorem: premain}. Sections~\mbox{\ref{section:horizontal}--\ref{section:vertical}}
contain our main technical lemmas needed for the proof of Theorem~\ref{theorem: main}.
The proof itself is given in
Section~\ref{section:proof}.

Section~\ref{section: periods for Grassmannians}
is devoted to further simplifications of Givental's integrals. Theorem~\ref{theorem: main}
states that there is a series of changes
of variables allowing one to birationally
present Landau--Ginzburg models for complete intersections in Grassmannians of planes
as Laurent polynomials. In Section~\ref{section: periods for Grassmannians} we check that these changes of variables
(and also ones for complete intersections in projective spaces) agree with changing Givental's integrals.
This shows that Givental's integrals are indeed periods; they can be easily computed as constant term series.
As a corollary one gets the fact that complete intersections in Grassmannians of planes have weak Landau--Ginzburg models.

In Section~\ref{section:hyperplane-sections} we write down explicit formulas
that are obtained in the proof of Theorem~\ref{theorem: main} for Fano intersections
of Grassmannians of planes with several hyperplanes.
Section~\ref{section:examples} provides a series of examples worked out
as an application of Theorem~\ref{theorem: main} in some other interesting cases.

Due to the lack of space we omit some boring computations in Sections~\ref{section:horizontal},
\ref{section:mixed}, \ref{section:vertical} and~\ref{section:examples}.
In particular we provide detailed proof of technical Lemma~\ref{lemma:horizontal-block-G2-start}
but we skip proofs of Lemmas~\ref{lemma:horizontal-block-G2},~\ref{lemma:horizontal-block-G2-narrow}
,~\ref{lemma:mixed-block-G2},~\ref{lemma:mixed-block-G2-start},~\ref{lemma:vertical-block-G2}
 because they are very similar to the
proof of Lemma~\ref{lemma:horizontal-block-G2-start}. All omitted details can be found in~\cite{PSh-arxiv}.

Those who have a certain amount of combinatorial courage can apply
the approaches described in our paper in more general cases, say, for
complete intersections in arbitrary Grassmannians
or even in partial flag varieties.
We discuss this in Section~\ref{section:discussion}. We also discuss
how one can study weak toric Landau--Ginzburg models given in the proof of
Theorem~\ref{theorem: main} in a deeper way.

%Finally in the Appendix written by Thomas Prince another algorithm
%to transform Landau--Ginzburg models suggested
%in~\cite{BCFKS97} to very weak Landau--Ginzburg models is presented.
%%This algorithm heavily uses specifics of the case of Grassmannians
%%of planes and we do not see how to extend it to more general case;
%%however it is much more short then the one obtained in Theorem~\ref{theorem: main}.
%It uses a completely different approach inspired by~\cite{CKP14} and gives a shorter way to obtain
%Theorem~\ref{theorem: main}.

\medskip

{\bf Notation and conventions.} Everything is over $\C$.
We use (co-)homology groups with integral coefficients and denote $H^*(X,\ZZ)$ by $H^*(X)$ and  $H_*(X,\ZZ)$ by $H_*(X)$.
Given two integers $n_1$ and $n_2$, we denote the set $\{i\in \ZZ \mid n_1\le i\le n_2\}$ by $[n_1,n_2]$.
%; given an integer $n$, we denote the set $[1,n]$ by $[n]$.
Calabi--Yau varieties in this paper are projective varieties with trivial canonical class.
We often use the same notation for a (Cartier) divisor on a variety $X$ and its class in $\pic(X)$.
When we speak about hyperplane or hypersurface sections of a Grassmannian we mean hyperplane or hypersurface sections in its Pl\"ucker embedding.

{\bf Acknowledgments.} The authors are very grateful to B.\,Kim and I.\,Ciocan-Fontanine for explanations of Givental's integrals and for many useful remarks, and also to A.\,Harder, A.\,Fonarev, S.\,Gorchinskiy, and A.\,Kuznetsov for inspiring discussions.

\section{Toric Landau--Ginzburg models}
\label{section:toric-LG}

In this section we define the main objects of our
considerations --- toric Landau--Ginzburg models. For more details and examples see~\cite{Prz13} and
references therein.

Let $X$ be a smooth Fano variety of dimension $N$ and Picard number $\rho$.
Choose a basis
$$%\mathcal H=
\{H_1,\ldots,H_{\rho}\}$$
in $H^2(X)$
so that for any $i\in [1,\rho]$ and any class $\beta$ in the cone of effective curves
$K$ of $X$ one has~\mbox{$H_i\cdot\beta\ge 0$}.
Introduce formal variables $q^{\sigma_i}$, $i\in [1,\rho]$, and denote $q_i=q^{\sigma_i}$.
For any~\mbox{$\beta\in H_2(X)$} denote
$$q^\beta=q^{\sum \sigma_i (H_i\cdot \beta)}.$$
Consider the Novikov ring $\C_q$, i.\,e. a group ring for $H_2(X)$. We treat it as a ring of polynomials over $\C$ in formal variables
$q^\beta$ with relations
$$q^{\beta_1}q^{\beta_2}=q^{\beta_1+\beta_2}.$$
Note that for any $\beta\in K$ the monomial $q^\beta$ has non-negative degrees in $q_i$.

Let the number
$$
\langle \tau_a \gamma \rangle_\beta, \quad a\in \ZZ_{\geqslant 0},\ \gamma \in H^*(X),\ \beta \in K,
$$
be a one-pointed Gromov--Witten invariant with descendants for $X$, see~\cite[VI-2.1]{Ma99}.
Let $\mathbf 1$ be the fundamental class of $X$.
The series
$$
I^X_{0}(q_1,\ldots,q_{\rho})
=1+\sum_{\beta \in K} \langle\tau_{-K_X\cdot\beta-2} \mathbf 1\rangle_{\beta}
\cdot q^\beta
$$
is called \emph{a constant term of $I$-series} (or \emph{a constant term of Givental's $J$-series}) for $X$
and the series
$$
\widetilde{I}^X_{0}(q_1,\ldots,q_{\rho})=
1+\sum_{\beta \in K} (-K_X\cdot \beta)!\langle\tau_{-K_X\cdot\beta-2} \mathbf 1\rangle_{\beta}
\cdot q^\beta
$$
is called \emph{a constant term of regularized $I$-series} for $X$.
Given a divisor class $H=\sum \alpha_i H_i$
one can restrict these series to a direction corresponding to this divisor setting
$\sigma_i=\alpha_i \sigma$ and~\mbox{$t=q^\sigma$}.
Given a class of symplectic form $[\omega]$ consider a divisor class $D$ associated with it.
We are interested in restriction of the $I$-series to \emph{orbit of the anticanonical direction associated with $\omega$},
so we replace $q^\beta$ by $e^{-D\cdot \beta}t^{-K_X\cdot \beta}$.
%Fixing a form $\omega =\sum \omega_i H_i$
In particular one can define a \emph{restriction of a constant term of regularized $I$-series to anticanonical direction} (so $\omega=0$);
it has the form
$$
\widetilde{I}^X_0(t)=1+a_1t+a_2t^2+\ldots,\ \ \ \ a_i\in \CC.
$$

\begin{definition}[{see~\cite[\S6]{Prz13}}]
\label{definition: toric LG}
\emph{A toric Landau--Ginzburg model} of $X$ is a Laurent polynomial $\mbox{$f\in \CC[x_1^{\pm 1}, \ldots, x_N^{\pm 1}]$}$ which satisfies:
\begin{description}
  \item[Period condition] The constant term of $f^i$ equals $a_i$ for any $i$.
  \item[Calabi--Yau condition] There exists a relative compactification of a family
$$f\colon (\CC^*)^N\to \CC$$
whose total space is a (non-compact) smooth Calabi--Yau
variety $LG(X)$. Such compactification is called \emph{a Calabi--Yau compactification}.
  \item[Toric condition] There is a degeneration
   $X\rightsquigarrow T$ to a toric variety~$T$ whose fan polytope
  (i.\,e. the convex hull of generators of its rays) coincides with Newton polytope
  (i.\,e. the convex hull of the support) of $f$.
%The Laurent polynomial without the toric condition is called \emph{a weak Landau--Ginzburg model}.
\end{description}
\end{definition}

\begin{remark}
The period condition is a numerical expression of coincidence of constant term of regularized $I$-series and a period of the family provided by $f$, see Remark~\ref{remark: Picard--Fuchs} and Theorem~\ref{theorem: Picard--Fuchs}.
\end{remark}

Let us remind that the Laurent polynomials for which the period condition is satisfied are called {weak Landau--Ginzburg models};
ones for which in addition a Calabi--Yau condition holds are called \emph{weak Landau--Ginzburg models}.

Toric Landau--Ginzburg models are known for Fano threefolds (see~\cite{Prz13}, \cite{ILP13}, and~\cite{DHKLP}) and
complete intersections in projective spaces (\cite{ILP13}); some other partial results are also known.

There are two usual ways to find toric Landau--Ginzburg models. The first way is to find birational transformations
of known suggestions for Landau--Ginzburg models to make their total spaces tori. In this case their superpotentials
in toric coordinates are Laurent polynomials.
After this one can try to prove the three conditions for being a toric Landau--Ginzburg model.
An important case of this approach is the following. Given a Fano variety one can sometimes describe it
(or its ``good'' degeneration) as a complete intersection in a toric variety.
Then one can try to find a relative birational isomorphism of Givental's type Landau--Ginzburg models (see Definition~\ref{def:GLG}) with a torus.
An example of this approach can be found in~\cite{Prz10} and~\cite{CCGGK12}.

The second way is to find a toric degenerations of $X$. Given this degeneration one has a Newton polytope of its possible toric
Landau--Ginzburg model and so can try to find its particular coefficients. One can look at~\cite{DHKLP} for an example of this approach.

In this paper we apply the first approach to find candidates for toric Landau--Ginzburg models for complete intersections in Grassmannians
using Givental suggestions of Landau--Ginzburg models. We conjecture that Laurent polynomials we get are toric Landau--Ginzburg models.

%\subsection{Givental construction}
%\label{subsection:Givental}
%
\section{Complete intersections in smooth toric varieties}
\label{section:toric}

In this section we describe Givental's construction of Landau--Ginzburg models and period integrals for complete intersections in toric varieties given in~\cite{Gi97b} (see discussion after Corollary~0.4 therein).
That is, we describe their weak Landau--Ginzburg models and discuss their periods.
Further in Section~\ref{section: ci in Grassmainans} we literally repeat these considerations for complete intersections in singular toric varieties (which are terminal Gorenstein degenerations of Grassmannians).

Let $X$ be a factorial $N$-dimensional toric Fano variety of Picard rank $\rho$ corresponding to
a fan $\Sigma_X$ in a lattice $\mathcal{N}\cong\ZZ^N$. Let $D_1,\ldots, D_{N+\rho}$ be its %corresponding boundary
prime invariant divisors.
Let~\mbox{$Y_1,\ldots,Y_l$} be ample divisors in $X$ cutting out a smooth Fano complete intersection
$$Y=Y_1\cap\ldots\cap Y_l.$$
Put $Y_0=-K_X-Y_1-\ldots-Y_l$.
Choose a
basis
$$
\{H_1,\ldots,H_\rho\}\subset H^2(X)$$
so that for any $i\in [1,\rho]$ and any curve $\beta\in K$ of $X$ one has $H_i\cdot\beta\ge 0$.
Introduce variables $q_1,\ldots,q_{\rho}$ as in Section~\ref{section:toric-LG}. Define $\kappa_i$ by $-K_Y=\sum \kappa_i H_i$.

The following theorem is a particular case of Quantum Lefschetz hyperplane theorem, see~\cite[Theorem 0.1]{Gi97b}.
\begin{theorem}
Suppose that $\dim (Y)\geqslant 3$. Then the constant term of regularized $I$-series for $Y$ is given by
\begin{equation}\label{particular-Quantum-Lefschetz}
\widetilde{I}^Y_{0}(q_1,\ldots,q_{\rho})=
\exp\big(\mu(q)\big)\cdot
\sum_{\beta\in K}q^\beta\frac{\prod_{i=0}^l|\beta\cdot Y_i|!%^{\frac{(\beta\cdot Y_i)}{|\beta\cdot Y_i|}}
}{\prod_{j=1}^{N+\rho}|\beta\cdot D_j|!^{\frac{\beta\cdot D_j}{|\beta\cdot D_j|}}}
\end{equation}
where $\mu(q)$
is a correction term linear in $q_i$ (in particular it is trivial in the higher index case).
For $\dim(Y)=2$ the same formula holds after replacing $H^2 (Y)$ in the definition of~\mbox{$\widetilde{I}^Y_{0}$} given in Section~\ref{section:toric-LG}
by the restriction of $H^2(X)$ to $Y$.
\end{theorem}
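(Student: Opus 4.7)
The plan is to deduce this identity from Givental's Quantum Lefschetz hyperplane theorem~\cite[Theorem 0.1]{Gi97b} applied to the ambient toric Fano $X$. First I would recall that for a smooth toric Fano there is a closed-form expression for Givental's $J$-function (equivalently, for its $I$-function, which coincides with $J^X$ up to a mirror map),
$$
J^X(q,z)=\sum_{\beta\in K\cap H_2(X)} q^\beta \prod_{j=1}^{N+\rho}\frac{\prod_{m\le 0}(D_j+mz)}{\prod_{m\le \beta\cdot D_j}(D_j+mz)}.
$$
Pairing with $\mathbf 1$ (so that only the terms with $D_j=0$ survive) and setting $z=1$ produces exactly the denominator $\prod_j|\beta\cdot D_j|!^{(\beta\cdot D_j)/|\beta\cdot D_j|}$ of the claimed formula: the sign in the exponent records whether the ``extra factors'' left over after truncating the infinite product end up in the denominator ($\beta\cdot D_j>0$) or in the numerator ($\beta\cdot D_j<0$).

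Next I would apply Quantum Lefschetz one divisor at a time. Since each $Y_i$ is ample, $\beta\cdot Y_i\ge 0$ for $\beta\in K$, and Quantum Lefschetz multiplies the coefficient of $q^\beta$ in the $I$-series by $\prod_{d=1}^{\beta\cdot Y_i}(Y_i+dz)$. Projecting to the constant term produces the factorial $(\beta\cdot Y_i)!$, yielding the factors $|\beta\cdot Y_i|!$ for $i=1,\ldots,l$ in the numerator. Finally, passing from the $I$-series to the \emph{regularized} $I$-series multiplies each coefficient by $(-K_Y\cdot\beta)!$; the adjunction identity
$$
-K_Y=(-K_X-Y_1-\cdots-Y_l)|_Y=Y_0|_Y
$$
identifies $(-K_Y\cdot\beta)!$ with $(Y_0\cdot\beta)!=|\beta\cdot Y_0|!$, completing the numerator.

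The prefactor $\exp(\mu(q))$ is the usual mirror-map correction: Quantum Lefschetz produces coefficients that are polynomial rather than monomial in $z$, and the $J$-function is recovered from the $I$-function by absorbing the piece linear in $z$ into an exponential change of variables linear in $q$. Such a correction is forced only when some $\beta$ with $Y_i\cdot\beta=1$ contributes and, in particular, does not occur in the higher index case, so $\mu\equiv 0$ there. The dimension hypothesis $\dim Y\ge 3$ ensures, via the Lefschetz hyperplane theorem, that the restriction $H^2(X)\to H^2(Y)$ is an isomorphism under which the two Novikov rings are naturally identified; for $\dim Y=2$ this isomorphism can fail and one must instead work with the image of $H^2(X)$ in $H^2(Y)$, which is the modification already built into the statement. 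The main technical nuisance is careful bookkeeping of the negative intersection numbers $\beta\cdot D_j<0$ and the explicit determination of $\mu(q)$, but both are routine once one reads off the relevant terms from Givental's general formula.
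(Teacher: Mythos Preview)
The paper does not supply its own proof of this statement; it merely records that the theorem is a particular case of Givental's Quantum Lefschetz hyperplane theorem, citing \cite[Theorem~0.1]{Gi97b}. Your proposal is precisely an unpacking of that citation: you start from the explicit toric $I$-function, apply Quantum Lefschetz for each $Y_i$, interpret the regularization factor via adjunction as $(Y_0\cdot\beta)!$, and identify the exponential prefactor with the mirror-map correction. This is exactly the intended derivation, and your sketch is correct.
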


\begin{remark}
Note that the summands of
the series~\eqref{particular-Quantum-Lefschetz} have non-negative degrees in $q_i$.
\end{remark}

Now we describe Givental's construction of a dual Landau--Ginzburg model of $Y$ and compute its periods.
Introduce $N$ formal variables $u_1,\ldots,u_{N+\rho}$
corresponding to divisors~\mbox{$D_1,\ldots,D_{N+\rho}$.}

Let $\mathcal{M}=\mathcal{N}^{\vee}$, and let $\mathcal{D}\cong\ZZ^{N+\rho}$ be a lattice with
a basis $\{D_1,\ldots, D_{N+\rho}\}$ (so that one has a natural identification $\mathcal{D}\cong\mathcal{D}^{\vee}$).
By~\cite[Theorem~4.2.1]{CLS11} one has an exact sequence
$$
0\to \mathcal{M} \to \mathcal{D}
\to A_{N-1}(X)=\pic(X)\cong\ZZ^{\rho}\to 0.
$$
We use factoriality of $X$ here to identify the class group $A_{N-1}(X)$ and the Picard
group~\mbox{$\pic(X)$}.
Dualizing this exact sequence, we obtain
an exact sequence
\begin{equation}
\label{sequence}
0\to \pic(X)^{\vee}\to \mathcal{D}\to \mathcal{N}\to 0.
\end{equation}
Thus $\pic(X)^{\vee}$ can be identified with the lattice of relations on primitive vectors on the
rays of $\Sigma_X$ considered as Laurent monomials in variables $u_i$.
On the other hand, as the basis in $\pic(X)$ is chosen we can identify $\pic(X)^{\vee}$ and $\pic(X)=H^2(X)$.
Hence we can choose a basis in the lattice of relations on primitive vectors on the
rays of $\Sigma_X$ corresponding to $\{H_i\}$ and, thus, to $\{q_i\}$.
We denote these relations by $R_i$, and interpret them as monomials in
the variables~\mbox{$u_1,\ldots,u_{N+\rho}$}.
We also denote by $D_i$ the images of $D_i\in \mathcal D$ in $\pic X$.

Choose a nef-partition, i.\,e.
a partition of the set $[1,N+\rho]$ into sets $E_0,\ldots,E_l$ such that for any $i\in [1,l]$
the divisor $\sum_{j\in E_i} D_j$ is linearly equivalent to $Y_i$ (which also
implies that the divisor~\mbox{$\sum_{j\in E_0} D_j$} is linearly equivalent to $Y_0$).

The following definition is well-known (see discussion after Corollary~0.4 in~\cite{Gi97b},
and also~\cite[\S7.2]{HV00}).

\begin{definition}
\label{def:GLG}
%Let $\omega$ be a symplectic form on $Y$. Then
\emph{Givental's Landau--Ginzburg model} for $Y$
is a variety~\mbox{$LG_0(Y)$} in a torus
$$T=\Spec \C_q[u_1^{\pm 1}, \ldots,u_{N+\rho}^{\pm 1}]$$
given by equations
\begin{equation}\label{eq:Rq}
R_i=q_i, \ i\in [1,\rho],
\end{equation}
and
\begin{equation}\label{eq:Eu}
\left(\sum_{s\in E_j} u_s\right)=1, \ j\in [1,l],
\end{equation}
with a function $w=\sum_{s\in E_0} u_s$ called \emph{superpotential}. %,
%where $q_1,\ldots,q_\rho$ are
%the following complex numbers.
Given %\footnote{VEZDE!}
a symplectic form $\omega$ with~\mbox{$[\omega]\sim \sum \omega_iH_i,$}
where $[\omega]$ is the class in $\pic(Y)$ corresponding to $\omega$,
define \emph{the Givental's Landau--Ginzburg model~\mbox{$LG(Y,\omega)$} associated to $\omega$}
specializing $q_i=\exp(\omega_i)$.
If $\omega$ is an anticanonical form~$\omega_Y$, i.\,e. one has
$[\omega]=-K_Y$, we say for simplicity that~\mbox{$LG(Y)=LG(Y,\omega)$} is an anticanonical Givental's Landau--Ginzburg model for $Y$
instead of saying that~\mbox{$LG(Y,\omega_Y)$} is a Givental's Landau--Ginzburg model for~\mbox{$(Y,\omega_Y)$}.
%To avoid noisy notation in the following in the cases when we do not specify the form $\omega$
%we sometimes consider the numbers $q_i$ as formal parameters.
\end{definition}

One can define a Landau--Ginzburg model associated to a symplectic form in slightly another way, multiplying coefficients
of a divisor corresponding to the form by some number, say $2\pi i$.

\begin{remark}
\label{remark:shift}
The superpotential of Givental's Landau--Ginzburg models can be defined as~\mbox{$w'=u_1+\ldots+u_{N+\rho}$}. However we don't make a distinction between
two superpotentials $w$ and $w'$ as $w'=w+l$, so both these functions define the same family over $\C_q$.
\end{remark}

Given variables $x_1,\ldots, x_r$, define a \emph{standard logarithmic form in these variables} as the form
\begin{equation}
\label{eq: standard form}
\Omega(x_1,\ldots,x_r)=\frac{1}{(2\pi i)^r}\frac{dx_1}{x_1}\wedge\ldots\wedge\frac{dx_r}{x_r}.
\end{equation}

The following definition is well-known (see discussion after Corollary~0.4 in~\cite{Gi97b},
and also~\cite{Gi97a}).

\begin{definition}
\label{def:integral}
Fix $N+\rho$ real positive numbers $\varepsilon_1,\ldots,\varepsilon_{N+\rho}$ and define an $(N+\rho)$-cycle
$$
\delta = \{|u_i=\varepsilon_i|\}\subset \C[u_1^{\pm 1},\ldots, u_{N+\rho}^{\pm 1}]
.$$
%and $\rho$ complex numbers~\mbox{$q_1,\ldots,q_\rho$}.
\emph{Givental's integral} for $Y$ or $LG_0(Y)$ is an integral
\begin{equation}\label{eq:Givental-integral}
I_Y^0=
\int\limits_{\delta}\frac{
\Omega (u_1,\ldots,u_{N+\rho})}
{\prod_{i=1}^\rho (1-\frac{q_i}{R_i})\cdot \prod_{j=0}^l\left(1-\left(\sum_{s\in E_j} u_s\right)\right)}\in \C[[q_1,\ldots,q_\rho]].
\end{equation}
Given a class of symplectic form $\omega$ and a divisor class $D=\sum \omega_i H_i$ associated with it %$\omega=\sum \alpha_i H_i$
one can \emph{specialize Givental's integral to the anticanonical direction and the form $\omega$}
putting~\mbox{$q_i=e^{\omega_i}t^{\kappa_i}$} in the integral~\eqref{eq:Givental-integral}. We denote the result of specialization by $I_{(Y,\omega)}$, and
we put $I_{(Y,\omega)}=I_{Y}$ if $[\omega]=0$, which means that we put $D=0$, so that $w_i=0$ for all~$i$.
\end{definition}

\begin{remark}
The integral~\eqref{eq:Givental-integral} does not depend on numbers $\varepsilon_i$ provided they are small enough.
\end{remark}

\begin{remark}
The integral~\eqref{eq:Givental-integral} is defined up to a sign as we do not specify an order of variables.
\end{remark}

The following assertion is well-known to experts (see~\cite[Theorem~0.1]{Gi97b},
and also discussion after Corollary~0.4 in~\cite{Gi97b}).

\begin{theorem}
\label{theorem:series}
One has
$$
\widetilde{I}^Y_0=I_Y^0.
$$
%where the term on the left hand size is the evaluation of the series
%$\widetilde{I}^Y_0$ at $q_1,\ldots,q_{\rho}$.
\end{theorem}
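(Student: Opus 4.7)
The plan is to expand both sides as formal power series in $q_1,\ldots,q_\rho$ and match the coefficient of $q^\beta$ for each class $\beta\in K\cap H_2(X)$. On the left, the closed form \eqref{particular-Quantum-Lefschetz} provided by Quantum Lefschetz already identifies this coefficient in terms of the intersection numbers $Y_i\cdot\beta$ and $D_j\cdot\beta$; all the work is thus to compute the corresponding coefficient of $I_Y^0$ and recognise the same expression.

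For the integral, first expand each factor of the denominator as a geometric series
$$
\frac{1}{1-q_i/R_i}=\sum_{n_i\ge 0}(q_i/R_i)^{n_i},\qquad\frac{1}{1-\sum_{s\in E_k}u_s}=\sum_{m_k\ge 0}\Bigl(\sum_{s\in E_k}u_s\Bigr)^{m_k},
$$
and treat each $\bigl(\sum_{s\in E_k}u_s\bigr)^{m_k}$ by the multinomial theorem. Extracting the coefficient of $q^\beta$ from the first product forces $n_i=H_i\cdot\beta$. Using the sequence \eqref{sequence} and the way the chosen bases of $\pic(X)$ and its dual have been identified, the exponent of $u_j$ in $R_i$ equals the coefficient of $H_i$ in $[D_j]$, so the remaining contribution is the Laurent monomial $\prod_j u_j^{-D_j\cdot\beta}$. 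The integration over the cycle $\delta$ now extracts the residue at the origin, i.e.\ the constant term in the $u_j$; the power-matching condition forces $a_s^{(k)}=D_s\cdot\beta$ for $s\in E_k$, and the surviving multinomial weights combine into
$$
\prod_{k=0}^l\frac{\bigl(\sum_{s\in E_k}D_s\cdot\beta\bigr)!}{\prod_{s\in E_k}(D_s\cdot\beta)!}=\frac{\prod_{k=0}^l(Y_k\cdot\beta)!}{\prod_{j=1}^{N+\rho}(D_j\cdot\beta)!},
$$
the final equality being the nef-partition identity $\sum_{s\in E_k}D_s\sim Y_k$. This is precisely the main summand of \eqref{particular-Quantum-Lefschetz}.

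The main obstacle is twofold. First, the power-matching step only produces a nontrivial contribution when every $D_j\cdot\beta$ is non-negative; for the remaining classes one must argue that both sides vanish, which becomes transparent once the factorials in \eqref{particular-Quantum-Lefschetz} are interpreted through the Gamma function, so that $1/\Gamma(n+1)=0$ for $n<0$ kills the corresponding summands. Second, the exponential correction $\exp(\mu(q))$ in \eqref{particular-Quantum-Lefschetz} is present in low-index situations and must be matched with a leading-order contribution of the integral coming from the first-order interaction of the simple poles $q_i/R_i=1$ and $\sum_{s\in E_k}u_s=1$; since $\mu$ is linear in the $q_i$ this reduces to a term-by-term comparison in degree one, and in the higher-index setting relevant to the rest of the paper the step is vacuous. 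A minor technical check — that $\delta$ lies in the common domain of convergence of all the geometric expansions above — is guaranteed by taking the $\varepsilon_i$ small enough, exactly as recorded in the remark immediately after Definition~\ref{def:integral}.
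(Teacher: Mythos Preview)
The paper does not supply its own proof of Theorem~\ref{theorem:series}; it records the result as ``well-known to experts'' and defers entirely to~\cite[Theorem~0.1]{Gi97b} and the discussion after Corollary~0.4 there. So there is no in-paper argument to compare your proposal against.

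Judged on its own, your residue/multinomial computation is the standard mechanism and does recover the main hypergeometric summand when every $D_j\cdot\beta\ge 0$; that is the easy part of the identity. But the two ``obstacles'' you flag are not actually handled. First, your assertion that for classes with some $D_j\cdot\beta<0$ ``both sides vanish'' is incorrect as written: formula~\eqref{particular-Quantum-Lefschetz} carries absolute values and the sign exponent $\frac{\beta\cdot D_j}{|\beta\cdot D_j|}$ precisely so that such summands do \emph{not} vanish --- when $D_j\cdot\beta<0$ the factor $|D_j\cdot\beta|!$ migrates to the numerator, and there is no Gamma-function pole to kill it. Matching those terms on the integral side is not achieved by the multinomial expansion you give. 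Second, the expansion you describe produces the bare hypergeometric sum with no prefactor, so in the index-one situation reconciling it with the $\exp(\mu(q))$ in~\eqref{particular-Quantum-Lefschetz} is not ``a term-by-term comparison in degree one'' but exactly the mirror-map step of Givental's theorem. In short, the substantive content --- negative intersection numbers and the low-index correction --- is what your sketch glosses over, and a full argument needs the machinery the paper outsources to~\cite{Gi97b}.
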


The recipe for Givental's Landau--Ginzburg model and integral can be written down in another, more simple, way.
That is, we make suitable monomial change of variables~\mbox{$u_1,\ldots,u_{N+\rho}$} an get rid
of some of them using equations~\eqref{eq:Rq}. More precisely,
as~$\mathcal{N}$ is a free group, using the exact sequence~\eqref{sequence}
one obtains an isomorphism
$$\mathcal{D}\cong \pic{X}^\vee\oplus \mathcal{N}.$$
Thus one can find a monomial change of variables
$u_1,\ldots, u_{N+\rho}$ to some new variables~\mbox{$x_1,\ldots,x_N, y_1,\ldots,y_\rho$}, so that
$$u_i=\widetilde{X}_i(x_1,\ldots, x_N, y_1,\ldots,y_\rho,q_1,\ldots,q_\rho)$$
such that for any $i\in [1,\rho]$
one has
$$\frac{R_i(u_1,\ldots,u_{N+\rho})}{q_i}=\frac{1}{y_i}.$$
Put
$$X_i=\widetilde{X}_i(x_1,\ldots, x_N, 1,\ldots,1,q_1,\ldots,q_\rho).$$
Then $LG(Y)$ is given in the torus $\Spec \C_q[x_1^{\pm 1},\ldots,x_N^{\pm 1}]$
by equations
$$
\left(\sum_{s\in E_j} \alpha_s X_s\right)=1, \ j\in [1,l],
$$
with superpotential $w=\sum_{s\in E_0}\alpha_s X_s$, where
$\alpha_i=\prod q_j^{r_{i,j}}$ for some integers~$r_{i,j}$.

Let us mention that given
a Laurent monomial $U_i$ in variables $u_j$, $j\in [1,N+\rho]$,
that does not depend on a variable $u_i$ one has
\begin{equation}
\label{equation: integral changes}
\Omega(u_1,\ldots, u_i^{\pm 1}\cdot U_i,\ldots, u_{N+\rho})=\pm\Omega(u_1,\ldots, u_i,\ldots, u_{N+\rho}).
\end{equation}

This means that
\begin{equation}
\label{eq:integral xy}
I_Y^0=
\int_{\delta'}%\limits_{\substack{|x_j|=\varepsilon_j\\ |y_i|=\mu_i}}
\frac{
\pm \Omega(y_1,\ldots, y_\rho)\wedge\Omega(x_1,\ldots,x_N)
}{\prod_{i=1}^\rho (1-y_i)\prod_{j=0}^l\left(1-\left(\sum_{s\in E_j} \alpha_s\widetilde{X}_s\right)\right)}
\end{equation}
for some $(N+\rho)$-cycle $\delta'$.
%with appropriate choice of $\varepsilon_j$ and $\mu_i$.

Consider an integral
$$
\int_{\sigma} \frac{dU}{U}\wedge \Omega_0
$$
for some form $\Omega_0$ and a cycle $\sigma=\sigma'\cap \{|U|=\varepsilon\}$
%which is a boundary of a tubular neighborhood of
for some cycle~\mbox{$\sigma'\subset \{U=0\}$}.
%Then Residue Theorem states
It is well known
(see, for instance,~\cite[Theorem 1.1]{ATY85}) that
$$
\frac{1}{2\pi i}\int_{\sigma} \frac{dU}{U}\wedge \Omega_0=\int_{\sigma'}\left.\Omega_0\right|_{U=0}
$$
if both integrals are well defined (in particular the form $\Omega_0$ does not have a pole along~\mbox{$\{U=0\}$}).

We denote $$\left.\Omega_0\right|_{U=0}=\Res_U\left(\frac{dU}{U}\wedge \Omega_0\right).$$

Taking residues of the form on the right hand side of~\eqref{eq:integral xy} with respect to $y_i$ one gets
$$
I_Y^0=
\int_{\delta''}%\limits_{|x_i|=\varepsilon_i}
\frac{
\pm \Omega(x_1,\ldots,x_N)
}{\prod_{j=0}^l\left(1-\left(\sum_{s\in E_j} \alpha_s X_s\right)\right)}
$$
%with appropriate choice of $\varepsilon_i$.
for some $N$-cycle $\delta''$.

Moreover, one can introduce a new parameter $t$ and scale $u_i\to t u_i$ for $i\in E_0$.
Fix %\footnote{VEZDE! MOZHET FORMU ODIN RAZ V NACHALE VVESTI?}
a class of symplectic form $\omega$ and a divisor class $D=\sum \omega_i H_i$ associated with it.
One can check that after a change
of coordinates $q_i=e^{\omega_i}t^{\kappa_i}$ %t^{\kappa_i}$, where $\kappa_i$ are defined from the relation
%$$-K_Y\sim\sum \kappa_i H_i,$$
the initial integral
restricts to the integral
\begin{equation}
\label{eq:restricted integral}
\int_{\delta_1}%\limits_{|x_i|=\varepsilon_i}
\frac{
\pm \Omega (x_1,\ldots,x_N)
}{\prod_{j=1}^l\left(1-\left(\sum_{s\in E_j} \gamma_s X_s\right)\right)\cdot \left(1-t\left(\sum_{i\in E_0} \gamma_i X_i\right)\right)}=I_{(Y,\omega)}
\end{equation}
for some monomials $\gamma_i$ in $e^{\omega_j}$
and
for some $N$-cycle $\delta_1$ homologous to a cycle
$$\delta_1^0=\{|x_i|=\varepsilon_i\mid i\in [1,N]\}.$$
In particular, for $\omega=0$ one has $\gamma_i=1$.
The same specialization defines the anticanonical Givental's Landau--Ginzburg model of~$Y$, which is given by equations
$$
\left(\sum_{s\in E_j} X_s\right)=1, \quad j\in [1,l],
$$
with superpotential $w=\sum_{s\in E_0} X_s$.

Consider a non-toric variety $X$ that has a small (that is, terminal Gorenstein) toric degeneration $T$.
Let $Y$ be a Fano complete intersection in $X$. Consider a nef-partition for the set of rays of the fan of $T$ corresponding to (degenerations of)
hypersurfaces cutting out $Y$. Let $LG(Y)$ be a result of applying the procedure discussed above
for Givental's Landau--Ginzburg model defined for $T$ and the nef-partition in the same way as in the case of complete
intersections in smooth toric varieties.
Batyrev in~\cite{Ba97} suggested $LG(Y)$ as a Landau--Ginzburg model for $Y$.
Moreover, at least in some cases Givental's integral and Landau--Ginzburg model (associated to anticanonical class) can be simplified further by making birational
changes of variables and taking residues. Thus Givental's Landau--Ginzburg models give weak ones after such transformations.
In Section~\ref{section: periods for Grassmannians} we demonstrate both of these ideas for complete
intersections in projective spaces or Grassmannians of planes.

\section{Complete intersections in Grassmannians}
\label{section: ci in Grassmainans}
The picture described in Section~\ref{section:toric}
can be generalized to complete intersections in Grassmannians. The difference
is that Grassmannians are not toric. However they have \emph{small toric degenerations}, i.\,e. degenerations to terminal Gorenstein
toric varieties, see~\cite{St93}. The mirror construction for complete intersections in Grassmannians can be derived
from crepant resolutions of these degenerations. In this section we describe some constructions from~\cite{BCFKS97} and~\cite{BCFKS98}
for a Grassmannian $G=\G(n, k+n)$.

Fix two integers $n$ and $k$ such that $n, k\ge 2$.
We define a quiver $\QQ_0$
as a set of vertices
$$
\Ver(\QQ_0)=\{(i,j)\mid i\in [1,k], j\in [1,n]\}\cup\{(0,1), (k,n+1)\}
$$
and a set of arrows
$\Ar(\QQ_0)$ described as follows.
All arrows are either \emph{vertical} or \emph{horizontal}.
For any $i\in [1,k-1]$ and any $j\in [1,n]$ there is one
vertical arrow $\arrow{(i,j)}{(i+1,j)}$ that goes from the vertex $(i,j)$
down to the vertex $(i+1,j)$. For any $i\in [1,k]$ and any $j\in [1,n-1]$
there is one horizontal arrow
$\arrow{(i,j)}{(i,j+1)}$ that goes from the vertex $(i,j)$ to the right
to the vertex $(i,j+1)$.
We
also add an extra vertical arrow $\arrow{(0,1)}{(1,1)}$
and an extra horizontal
arrow  $\arrow{(k,n)}{(k,n+1)}$ to $\Ar(\QQ_0)$, see Figure~\ref{figure:quiverG25}.

\begin{figure}[htbp]
\begin{center}
\includegraphics[width=5cm]{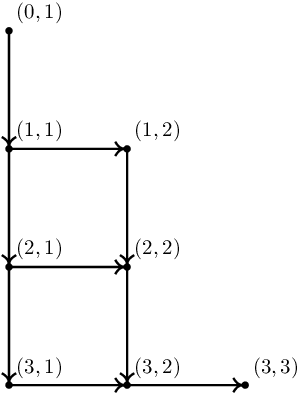}
\end{center}
\caption{Quiver $\QQ_0$ for the Grassmannian $\G(2,5)$}
\label{figure:quiverG25}
\end{figure}

Now we describe a toric degeneration
$P=P(n,k+n)$ of $G$ in its Pl\"ucker embedding.
The arrows of $\QQ_0$ correspond to rays of a fan $\Sigma_P$ of $P$, so we identify them;
relations for the primitive vectors on the rays of $\Sigma_P$
correspond to cycles in $\QQ_0$ if we identify vertices $(0,1)$ and $(k, n+1)$.
The cones of $\Sigma_P$ of dimension at least $2$ are cones over faces of a convex hull of generators of rays of $\Sigma_P$.
A degeneration $P$ is a Fano
toric variety corresponding to $\Sigma_P$.

The variety $P$ is not smooth but terminal Gorenstein.
It admits (some) crepant resolution
that we denote by $\widetilde{P}$.
All relations on rays of $P$ (or $\widetilde{P}$)
are combinations of basic ones described as follows.
For any $i\in [1,k-1]$ and $j\in [1,n-1]$ we have a \emph{box relation}
\begin{multline*}
\arrow{(i,j)}{(i+1,j)}+\arrow{(i+1,j)}{(i+1,j+1)}=\\=
\arrow{(i,j)}{(i,j+1)}+\arrow{(i,j+1)}{(i+1,j+1)};
\end{multline*}
besides that, we have one \emph{roof relation}
\begin{multline*}
0=
\arrow{(0,1)}{(1,1)}+\arrow{(1,1)}{(1,2)}+\ldots+\arrow{(1,n-1)}{(1,n)}+\\
+\arrow{(1,n)}{(2,n)}+\ldots+\arrow{(k-1,n)}{(k,n)}+\arrow{(k,n)}{(k,n+1)},
\end{multline*}
see Figure~\ref{figure:relations}.
These relations, considered as elements of the Picard group of $\widetilde{P}$,
form a basis in it.
The roof relation is a pull-back to $\widetilde{P}$ of
a generator of the Picard group of~$P$.
We introduce variables $q_i$, $i\in [1,(k-1)(n-1)]$, corresponding to
box relations, and
a variable $q$ corresponding to the roof relation.

\begin{figure}[htbp]
\begin{center}
\includegraphics[width=5cm]{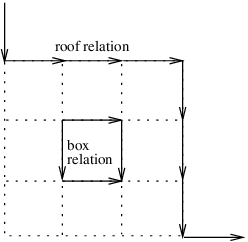}
\end{center}
\caption{Relations} \label{figure:relations}
\end{figure}

Now we describe a nef-partition corresponding to a complete intersection in
the Grassmannian $G$.
For a fixed $s\in [1,k-1]$ the \emph{$s$-th horizontal basic block} is a set of all
arrows~\mbox{$\arrow{(s,j)}{(s+1,j)}$} with $j\in [1,n]$. Similarly, for a fixed $s\in [1,n-1]$ the \emph{$s$-th vertical basic block} is a set of all arrows $\arrow{(i,s)}{(i,s+1)}$ with $i\in [1,k]$. We also define the $0$-th
horizontal basic block as the set that consists of a single
arrow $\arrow{(0,1)}{(1,1)}$, and we define the $n$-th
vertical basic block as the set that consists of a single
arrow $\arrow{(k,n)}{(k,n+1)}$.

A sum of divisors in $P$ associated to rays
corresponding to arrows in any horizontal or vertical basic block
is linearly
equivalent to
a generator of the
Picard group of $P$, see~\cite[Proposition~4.1.4]{BCFKS97}.
Thus given a complete intersection in $G$ one can choose
a nef-partition
that consists of collections of rays corresponding to arrows
of appropriate numbers of vertical or horizontal basic blocks.

The constant term of $I$-series
 of $\widetilde{P}$ is
 $$I=I^0_{\widetilde{P}}(q,q_1,\ldots, q_{(k-1)(n-1)}).$$
In~\cite[Conjecture 5.2.3]{BCFKS97} it was conjectured that
$$\widetilde{I}_0^{G}(q)=I(q,1,\ldots,1).$$
This is proved for $n=2$ in~\cite[Proposition 3.5]{BCK03} and for any $n\ge 2$
in~\cite{MR13}.

Consider a smooth Fano complete intersection $Y$ in $G$.
Let $LG_0(Y)$ be a Givental's Landau--Ginzburg model constructed for
$\widetilde{P}$ and a nef-partition associated
$Y$. Denote it's Givental's integral by $I^0_{Y}$.
In discussion after Conjecture~5.2.1 in~\cite{BCFKS98} it is explained
that, assuming the latter assertion,
one has
$$
\widetilde{I}^Y_0=I^0_{Y},
$$
which can be viewed as an analog of Theorem~\ref{theorem:series} in this particular non-toric case.

Further in Section~\ref{section: main theorem} we will study
the case of complete intersections in a Grassmannian of planes.
For this case there is an explicit formula for constant term of regularized $I$-series (and thus for Givental's integral).
Let
$$
\gamma(r)=\sum_{i\in [1,r]} \frac{1}{i}.
$$

\begin{theorem}[{\cite[Proposition 3.5]{BCK03}}]
\label{theorem: I-series}
Let
$$Y=\G(2,k+2)\cap Y_1\cap \ldots \cap Y_l$$
be a smooth Fano complete intersection
with $\deg Y_i=d_i$, $\sum d_i<k+2$. Denote
$$d_0=k+2-\sum d_i.$$
Then
$$
\widetilde{I}^Y_0=\sum_{d\geqslant 0}
\frac{\prod_{i=0}^l (d_0d_i)!}{d!^{k+2}}\cdot \frac{(-1)^d}{2} \cdot \sum_{r=0}^d
\binom{d}{r}^{k+2}\big( (k+2)\left(d-2r\right)\left( \gamma(r)-\gamma(d-r)\right)-2\big)
\cdot t^{d_0d}.
$$
\end{theorem}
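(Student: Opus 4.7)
The plan is to prove the formula in two stages: first derive the explicit expression for the regularized $I$-series of the ambient Grassmannian $G=\G(2,k+2)$, then apply Quantum Lefschetz to descend to the complete intersection $Y$.

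For the first stage I would use the Abelian/non-Abelian correspondence of Bertram--Ciocan-Fontanine--Kim, which expresses the $J$-function of the GIT quotient $\G(2,n)=\mathrm{Hom}(\C^2,\C^{n})/\!/GL_2$ (with $n=k+2$) in terms of that of the abelianization $\P^{n-1}\times\P^{n-1}$. Starting from the classical expression
$$
I_{\P^{n-1}}(H,t,\hbar)=\sum_{d\ge 0}\frac{t^d}{\prod_{m=1}^d (H+m\hbar)^{n}},
$$
one forms the double series for $\P^{n-1}\times\P^{n-1}$, multiplies by the Vandermonde $H_1-H_2$ (the positive root of $GL_2$), and antisymmetrizes under $H_1\leftrightarrow H_2$. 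Regrouping by $d=d_1+d_2$ and $r=d_2$, the factor $\binom{d}{r}^{k+2}$ arises from the ratio $d!^{k+2}/(r!(d-r)!)^{k+2}$ needed to bring the two antisymmetric summands to a common denominator, while the overall $(-1)^d/2$ comes from the antisymmetrizer together with the normalization by the Vandermonde. The bracket $(k+2)(d-2r)(\gamma(r)-\gamma(d-r))-2$ appears as the coefficient of the lowest surviving power of $\hbar$: the naive $\hbar^{0}$ contribution cancels by the $H_1\leftrightarrow H_2$ symmetry, so one expands to the next order, where harmonic sums are produced as logarithmic derivatives of the Pochhammer-type products $\prod_{m=1}^{r}m^{k+2}$. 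This is equivalent to the direct evaluation of the descendent one-pointed Gromov--Witten invariants $\langle \tau_{(k+2)d-2}\mathbf{1}\rangle_{d}$ of $G$ carried out in \cite[Proposition~3.5]{BCK03}.

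For the second stage I would invoke Quantum Lefschetz in the form of the theorem recalled in Section~\ref{section:toric}, valid here by the truth of the BCFKS conjecture for $\G(2,k+2)$ (\cite{BCK03}, and more generally \cite{MR13}). Its effect is to insert the factor $\prod_{i=1}^{l}(d_i d)!$ into each degree-$d$ coefficient, coming from the hypersurface classes $Y_i\sim d_i H$; to replace the Grassmannian regularization factor by $(d_0 d)!$ (reflecting $-K_Y=d_0 H|_Y$); and to rescale the anticanonical parameter so that $t^{(k+2)d}$ becomes $t^{d_0 d}$, where $d_0=k+2-\sum_{i=1}^l d_i$. Combining these with the output of the first stage reproduces the claimed prefactor $\prod_{i=0}^{l}(d_0 d_i)!/d!^{k+2}$ and the exponent $t^{d_0 d}$.

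The main obstacle will be in the first stage: carefully tracking the $\hbar$-expansion through the antisymmetrization and verifying that projection onto the $H^{0}(G)$-component of the residue yields precisely the advertised combination of harmonic sums with the correct signs. Once this residue calculation is settled, the descent from $G$ to $Y$ via Quantum Lefschetz is essentially formal.
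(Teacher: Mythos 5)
Your two-stage plan is consistent with how this statement is actually established, but note that the paper itself offers no proof: the formula is imported from \cite[Proposition~3.5]{BCK03}, whose content is the explicit $I$-series of the Grassmannian of planes, combined implicitly with the Quantum Lefschetz mechanism already recalled in Section~\ref{section:toric}. Your first stage is essentially a reconstruction of the cited reference's argument (abelianization of $\G(2,k+2)$ through $\P^{k+1}\times\P^{k+1}$, antisymmetrization against the Vandermonde factor $H_1-H_2$, and extraction of the harmonic sums $\gamma(r)-\gamma(d-r)$ from the next-to-leading order of the $\hbar$-expansion), and your second stage is the standard Quantum Lefschetz descent that produces the factors $(d_id)!$ and the exponent $t^{d_0d}$; so the proposal is a correct sketch, modulo carrying out the residue bookkeeping you yourself flag. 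One attribution should be fixed: for the descent you need only the $J$-series of the ambient Grassmannian from \cite{BCK03} together with Quantum Lefschetz as in \cite{Ki00} and \cite{Lee01} (applied to the nef classes $d_iH$ on the honest Grassmannian), not the Batyrev--Ciocan-Fontanine--Kim--van Straten conjecture or \cite{MR13}; those inputs are used elsewhere in the paper, namely to identify Givental's integral for the toric degeneration with $\widetilde{I}^Y_0$, and are irrelevant to the formula above.
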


Summarizing, one can deal with a Grassmannian and a complete intersection therein
just replacing the Grassmannian by its small toric degeneration
and applying Givental's procedure to it.

Now we write down explicitly this picture after getting
rid of relations as it is described in
Section~\ref{section:toric}.
The superpotential for $G$ itself is the
polynomial
$$
a_{1,1} + \sum_{\substack{i\in [1,k-1],\\ j\in [1,n]}}
\frac{a_{i+1,j}}{a_{i,j}}+
\sum_{\substack{i\in [1,k],\\ j\in [1,n-1]}}
\frac{a_{i,j+1}}{a_{i,j}}+
\frac{1}{a_{k,n}}
$$
in variables $a_{i,j}$, $i\in[1,k]$, $j\in [1,n]$, see~\cite[B25]{EHX97}.

Consider the following Laurent polynomials:
\begin{equation}\label{eq:monomials}
\begin{aligned}
&T_1=a_{1,1},
\\
&T_{i+1}=\sum_{j\in [1,n]}\frac{a_{i+1,j}}{a_{i,j}},
\quad i\in [1,k-1],
\\
&T_{k+j}=\sum_{i\in [1,k]}\frac{a_{i,j+1}}{a_{i,j}},
\quad j\in [1,n-1],
\\
&T_{k+n}=\frac{1}{a_{k,n}}.
\end{aligned}
\end{equation}
For any arrow
$$\alpha=\arrow{(i,j)}{(i',j')}\in\Ar(\QQ_0)$$
we define $h(\alpha)$ and $t(\alpha)$
as the vertices $(i,j)$ and $(i',j')$, respectively.
One can see that Laurent monomials appearing in~\eqref{eq:monomials}
are of the form $a_{h(\alpha)}/a_{t(\alpha)}$ for some $\alpha\in\Ar(\QQ_0)$,
and Laurent polynomials listed in~\eqref{eq:monomials}
are of the form
$$
\sum\limits_{\alpha\in B}\frac{a_{h(\alpha)}}{a_{t(\alpha)}},
$$
where $B\subset\Ar(\QQ_0)$ is some basic block.

Consider a smooth Fano complete intersection
$$Y=G\cap Y_1\cap \ldots \cap Y_l$$
with $\deg (Y_p)=d_p$.
Choose a splitting $[1,k+n]=E_0\sqcup E_1\sqcup\ldots\sqcup E_l$
with $|E_p|=d_p$, $p\in [1,l]$, so that $|E_0|=k+n-\sum d_p$.
Define $\Sigma_p=\sum_{i\in E_p} T_i$, $p\in [0,l]$.
Then the equations of anticanonical Givental's Landau--Ginzburg model for $Y$ are
\begin{equation}
\label{eq:sigma}
\Sigma_p=1, \quad p\in [1,l],
\end{equation}
and the superpotential is~$\Sigma_0$.

\section{Main theorem}
\label{section: main theorem}

Now
we choose a specific nef-partition we are going to use in our main theorem, i.\,e.
in Theorem~\ref{theorem: main} below.
Informally, for any hypersurface we
take a union of a suitable number of consecutive basic blocks.
To make it more precise we introduce some additional terminology.

A \emph{horizontal block} of size $d$ is a union of
$d$ consecutive basic horizontal blocks.
A \emph{vertical block} of size $d$ is a union of
$d$ consecutive basic vertical blocks.
A \emph{mixed block} of size $d$ is a union of
$d_1$ consecutive basic horizontal blocks including
the $(k-1)$-th one and $d_2$ consecutive basic vertical
blocks including the first one,
where $d_1+d_2=d$.
By a block we will mean either a horizontal block, or a vertical block,
or a mixed block.

\begin{figure}[htbp]
\begin{center}
\includegraphics[width=10cm]{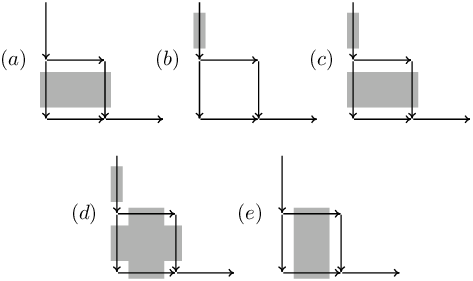}
\end{center}
\caption{Blocks for the Grassmannian $\G(2,4)$}
\label{figure:blocks}
\end{figure}

\begin{example}
Figure~\ref{figure:blocks} represents several examples of blocks
in a quiver corresponding to the Grassmannian~\mbox{$\G(2,4)$}. Namely,
Figures~\ref{figure:blocks}(a), \ref{figure:blocks}(b) and~\ref{figure:blocks}(c)
represent horizontal blocks, that are
the first basic horizontal block, the $0$-th basic horizontal block
and a horizontal block of size~$2$, respectively.
Figure~\ref{figure:blocks}(d) represents a mixed block of size~$3$.
Finally, Figure~\ref{figure:blocks}(e) represents the first basic vertical block.
\end{example}

The set of vertices of a block $B$ is the set
$\Ver(B)\subset\Ver(\QQ_0)$ such that for any~\mbox{$v\in\Ver(B)$}
there is an arrow $\alpha\in B$ with either $t(\alpha)=v$
or $h(\alpha)=v$.
We say that an arrow~\mbox{$\alpha\in\Ar(\QQ_0)$}
is an \emph{inner arrow} of a block $B$,
if~\mbox{$t(\alpha)\in\Ver(B)$} and~\mbox{$h(\alpha)\in\Ver(B)$},
while~\mbox{$\alpha\not\in B$}. We denote the set of inner arrows
for~$B$ by~\mbox{$\In(B)$}.

An \emph{admissible quiver} $\QQ$ is a subquiver
of $\QQ_0$ with a set of vertices $\Ver(\QQ)=\Ver(\QQ_0)$,
and a non-empty set of arrows $\Ar(\QQ)=\Ar(\QQ_0)\setminus B$,
where $B$ is either a horizontal
or a mixed block, and $B$ contains the arrow~$\arrow{(0,1)}{(1,1)}$.
In particular, if $\QQ$ is an admissible quiver
and $B'\subset\Ar(\QQ_0)$ is a block, then
$$B''=B'\cap\Ar(\QQ)$$
is again a block. Note also that if $\QQ$ is an admissible quiver such that
$\Ar(\QQ)$ contains the arrow $\arrow{(0,1)}{(1,1)}$,
then $\QQ=\QQ_0$.

Let $V=\{x_1,\ldots,x_N\}$ be a finite set. We denote the torus
$$\Spec \C [x_1^{\pm 1},\ldots, x_N^{\pm 1}]\cong (\C^*)^N$$
by $\TT(V)$. Note that $x_1,\ldots, x_N$
may be interpreted as coordinates on $\TT(V)$.

A \emph{triplet} is a collection
$(\QQ, V, R)$, where $\QQ$ is an admissible
quiver, $V$ is a finite set of variables, $R$ is a map from the set $\Ver(\QQ)$
to the set
of rational functions in the variables of~$V$.

A rational function associated to a triplet $(\QQ,V,R)$
and a non-empty subset $C\subset\Ar(\QQ)$
is the rational function in the variables of $V$ defined as
$$
F_{\QQ,V,R,C}=\sum\limits_{\alpha\in C}
\frac{R\left(h(\alpha)\right)}{R\left(t(\alpha)\right)}.
$$
A hypersurface $H_{\QQ,V,R,C}\subset \TT(V)$
associated to $(\QQ,V,R)$
and $C$ is defined by the equation
$F_{\QQ,V,R,C}=1$.
A rational function associated to a triplet $(\QQ,V,R)$
is the rational function in the variables of $V$ defined as
$$
F_{\QQ,V,R}=F_{\QQ,V,R,\Ar(\QQ)}=\sum\limits_{\alpha\in\Ar(\QQ)}
\frac{R\left(h(\alpha)\right)}{R\left(t(\alpha)\right)}.
$$

\emph{A change of variables that agrees
with a triplet} $(\QQ,V,R)$
and with a block $B$ is
a rational map
$$\psi\colon \TT(V')\dasharrow \TT(V),$$
where $V'$ is a set of variables such that $|V'|=|V|-1$,
the closure of the image of $\TT(V')$ with respect to $\psi$
 is the hypersurface
$$H_{\QQ,V,R,B}\subset \TT(V),$$
and $\psi$ gives a birational map
between~$\TT(V')$ and $H_{\QQ,V,R,B}$. By a small abuse of terminology
we will sometimes omit either a triplet or a block when speaking
about a change of variables that agrees with something. We will sometimes also
refer to automorphisms of tori as changes of variables, but in such situations
we will not mention any triplets or blocks.

Let $\psi$ be a
change of variables that agrees with a triplet $(\QQ,V,R)$ and with a
block~$B$.
\emph{A transformation of a triplet $(\QQ,V,R)$ associated
to} $\psi$ is a triplet $(\QQ',V',R')$, where~\mbox{$\QQ'\subset\QQ$}
is a quiver with $\Ver(\QQ')=\Ver(\QQ)$
and
$$\Ar(\QQ')=\Ar(\QQ)\setminus B,$$
the set $V'$ is a set of variables
such that $|V'|=|V|-1$, and $R'(i,j)=\psi^*R(i,j)$.

\begin{remark}
Let $(\QQ,V,R)$ be a triplet, $B$ be a block, and $\psi$ be a
change of variables that agrees with the triplet $(\QQ,V,R)$ and with the
block $B$.
Let $(\QQ', V', R')$ be a transformation of a triplet $(\QQ,V,R)$ associated
to $\psi$. Then
$\psi^*F_{\QQ,V,R,B}=1$ and
$$\psi^*F_{\QQ,V,R}=F_{\QQ',V',R'}+1.$$
\end{remark}

Now we can reformulate the description of
Landau--Ginzburg models for complete intersections in Grassmannians discussed in
Section~\ref{section: ci in Grassmainans}
in terms introduced above. Let
$$V_0=\{a_{i,j}\}, \quad i\in [1,k], j\in [1,n].$$
Put $R_0(i,j)=a_{i,j}$ for $i\in [1,k]$, $j\in [1,n]$, and $R_0(0,1)=R_0(k,n+1)=1$.
%, see Figure~\ref{figure:g25start}.
Let
$$Y=G\cap Y_1\cap \ldots \cap Y_l$$
be a smooth Fano complete intersection.
Let $B_1,\ldots,B_l$ be disjoint horizontal, mixed or vertical
blocks such that $B_i$, $i\in [1,l]$, is a block of size $\deg Y_i$. Put
$$C=\Ar(\QQ_0)\setminus \left(\cup_{i\in [1,l]} B_i\right).$$
Then
a variety that is a complete intersection of hypersurfaces
$H_{\QQ_0,V_0,R_0,B_i}$, $i\in [1,l]$,
in $\TT(V_0)$
equipped with a function $F_{\QQ_0,V_0,R_0,C}$ as superpotential
is a Landau--Ginzburg model of~$Y$ suggested in~\cite{BCFKS98}, cf. equations~\eqref{eq:sigma}.
Theorem~\ref{theorem: main}
states that for given $d_1,\ldots, d_l$ there is a choice of blocks
$B_1,\ldots, B_l$ and a sequence of~$l$
changes of variables such that the Landau--Ginzburg model in is fact birational to a torus,
and a birational equivalence can be chosen so that the
superpotential becomes a Laurent polynomial on this torus.

%\begin{figure}[htbp]
%\begin{center}
%\includegraphics[width=4.42cm]{}
%\end{center}
%\caption{Triplet $(\QQ_0, V_0, R_0)$ for the Grassmannian $\G(2,5)$}
%\label{figure:g25start}
%\end{figure}

\begin{theorem}
\label{theorem: main}
Let $n=2$.
Consider the triplet $(\QQ_0,V_0,R_0)$.
Let $d_1, \ldots, d_l$ be positive integers such that
for some $i_0\in [0,l]$ one has $d_1,\ldots,d_{i_0}>1$ and
$d_{i_0+1}=\ldots=d_l=1$. Suppose that
$$\sum d_i<k+2.$$
Then there exist blocks $B_1,\ldots, B_l$, a sequence of
triplets
$$(\QQ_i,V_i,R_i),\quad i\in [1,l],$$
and a sequence of changes of variables
$$\psi_i\colon \TT(V_i)\dasharrow \TT(V_{i-1}),\quad i\in [1,l],$$ such that
\begin{itemize}
\item
the size of the block $B_i$ is $d_i$;
\item one has $B_i\cap B_j=\varnothing$ for $i\neq j$, $i,j\in [1,l]$;
\item
the change of variables
$\psi_i$ agrees with the triplet $(\QQ_{i-1},V_{i-1},R_{i-1})$
and the block~$B_i$;
\item
the triplet $(\QQ_i,V_i,R_i)$ is a transformation of the triplet
$(\QQ_{i-1},V_{i-1},R_{i-1})$ associated to $\psi_i$;
\item
the rational function
$$
F_{\QQ_i,V_i,R_i}=
(\psi_i\circ\ldots\circ \psi_1)^*F_{\QQ_0,V_0,R_0}
$$
is a Laurent polynomial in variables of $V_i$.
\end{itemize}
In particular, the rational function $F_{\QQ_l,V_l,R_l}$
is a Laurent polynomial in $2k-l$ variables.
\end{theorem}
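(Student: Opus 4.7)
The proof proceeds by induction on $l$ through an explicit construction. I would first arrange the blocks $B_1, \ldots, B_l$ as pairwise disjoint subsets of $\Ar(\QQ_0)$: pack the large blocks (those of size $d_i > 1$) contiguously, starting with $B_1$, which must contain the arrow $\arrow{(0,1)}{(1,1)}$ and so is either a horizontal or a mixed block, then continuing with $B_2, \ldots, B_{i_0}$ as consecutive basic blocks of the appropriate sizes; finally place the remaining blocks $B_{i_0+1}, \ldots, B_l$ of size one among the unused basic blocks. Since the total number of basic blocks is $k + 2$ (namely, $k$ horizontal and $2$ vertical basic blocks) and $\sum d_i < k + 2$, there is always enough room, and the required disjointness is automatic.

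Next, for each $i \in [1, l]$ in sequence, I would construct the change of variables $\psi_i\colon \TT(V_i) \dasharrow \TT(V_{i-1})$ associated with the block $B_i$. The hypersurface $H_{\QQ_{i-1}, V_{i-1}, R_{i-1}, B_i}$ is defined by the single equation $F_{\QQ_{i-1}, V_{i-1}, R_{i-1}, B_i} = 1$ on $\TT(V_{i-1})$, and the construction of $\psi_i$ amounts to solving this equation for one distinguished variable in $V_{i-1}$ while introducing a new free parameter that replaces it. The recipe splits into cases depending on whether $B_i$ is horizontal, vertical, or mixed, and the precise formulas are provided by the technical lemmas of Sections~\ref{section:horizontal}--\ref{section:vertical}. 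By the remark preceding the theorem, the pullback then satisfies $\psi_i^* F_{\QQ_{i-1}, V_{i-1}, R_{i-1}} = F_{\QQ_i, V_i, R_i} + 1$.

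The central task, and the main technical obstacle, is to verify that $F_{\QQ_i, V_i, R_i}$ is a Laurent polynomial in the variables of $V_i$ at each stage. The naive pullback by $\psi_i$ introduces denominators coming from the substituted variable appearing in other monomial fractions of $F_{\QQ_{i-1}, V_{i-1}, R_{i-1}}$, and these must all cancel. The cancellation is engineered into the definition of $\psi_i$: the choice of the distinguished variable and of the new parameter exploits the local combinatorics of the quiver around $B_i$, so that after substitution every monomial fraction in the pullback either remains a Laurent monomial or simplifies via the box relations among the rays of $\Sigma_P$. The horizontal and vertical lemmas carry out this analysis in their respective cases, and the mixed block is handled by combining them; the hypothesis that the large blocks come first is used to keep the combinatorial type of the configuration under control when subsequent substitutions are performed. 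The invariant to preserve through the induction is that every value $R_i(v)$ is a Laurent monomial in the variables of $V_i$, so that the sum defining $F_{\QQ_i, V_i, R_i}$ is automatically a Laurent polynomial. Once this invariant is established at each step, the final claim follows: one has $|V_0| = 2k$, and each transformation removes exactly one variable, yielding $|V_l| = 2k - l$ as required.
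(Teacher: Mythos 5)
Your overall architecture (choose disjoint blocks, eliminate one variable per block via the lemmas of Sections~\ref{section:horizontal}--\ref{section:vertical}, count $|V_l|=2k-l$) matches the paper, but the step you rely on to close the induction is wrong. You claim the invariant to propagate is that ``every value $R_i(v)$ is a Laurent monomial in the variables of $V_i$''. This fails immediately after the first non-trivial substitution: solving $F_{\QQ,V,R,B}=1$ for the main variable produces vertex values such as $R_1(1,1)=a_{1,1}+a_{2,1}+\frac{a_{2,2}}{a_{1,2}}$ (see Example~\ref{example:X10-dim-4}) or $R(i,1)=a_{r,1}+\cdots+a_{i,1}$ (Lemma~\ref{lemma:horizontal-block-G2-narrow}), which are not monomials. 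The actual invariant is much finer and is the real content of the proof: at already-processed vertices the values must have the product structure $a_{i,1}\cdot\bar{R}(i)$, $a_{i,2}\cdot\bar{R}(i)$, $\bar{R}(i)$, or $\frac{a_{i+1,2}}{a_{w(i),1}}\cdot\bar{R}(i)$ with a common factor $\bar{R}(i)$ in each row, so that the ratios along the surviving horizontal arrows are monomials by cancellation; and the value $R(k,3)$ at the terminal vertex must be a Laurent polynomial whose monomials obey sign and degree constraints (non-negative degree in $a_{r,j}$, non-positive total degree in the second-column variables, prescribed $\Lambda_{\MM,\WW,\gamma,r}$-degree in the sense of Definition~\ref{definition:MWgamma}). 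These constraints, tracked through the ``block history'' data $(\MM,\WW,\gamma)$ and the weight-variable/main-variable two-step substitutions, are exactly what guarantees that the later substitutions --- which genuinely introduce denominators into $R(k,3)$ --- still return Laurent polynomials. Saying the cancellation is ``engineered into the definition of $\psi_i$'' via ``box relations among the rays of $\Sigma_P$'' does not supply this: the box relations are already consumed in passing from Givental's model to the torus presentation, and without formulating and propagating the degree conditions the induction does not close.

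Your block-placement argument is also too loose. Pairwise disjointness is not the only constraint: each intermediate quiver must remain admissible (so blocks are stripped from the top down), all blocks must avoid the arrow $\arrow{(k,2)}{(k,3)}$ (the superpotential's terminal arrow, whose vertex value $R(k,3)$ carries the whole inductive burden), at most one mixed or vertical block may occur and only as the very last one, and --- crucially --- the horizontal blocks of size at least $2$ must be processed before the basic horizontal ones, since Lemma~\ref{lemma:horizontal-block-G2} cannot be applied after Lemma~\ref{lemma:horizontal-block-G2-narrow} (Remark~\ref{remark:lemmas}). This ordering constraint is the reason for the hypothesis $d_1,\ldots,d_{i_0}>1$, $d_{i_0+1}=\cdots=d_l=1$ in the statement; in your sketch it appears only as an unexplained convenience, and your claim that disjointness is ``automatic'' from counting $k+2$ basic blocks ignores that one of them must be left untouched.
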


We will prove Theorem~\ref{theorem: main} in Section~\ref{section:proof}.
In order to do this we will deal separately with
changes of variables that agree with horizontal, mixed and vertical
blocks in Sections~\ref{section:horizontal},~\ref{section:mixed}
and~\ref{section:vertical}, respectively.
In most of the cases (except for a relatively easy
Lemma~\ref{lemma:horizontal-block-G2-narrow})
a change of variables will be performed in two steps.
First we will choose some variable
(which we will later refer to as \emph{weight variable}),
and make a monomial change of coordinates multiplying
each variable by a suitably chosen power of the weight variable.
After this
we will exclude another variable (which we will later
refer to as \emph{main variable}) using the equation
of the hypersurface associated to
the triplet and the block, and check that after the corresponding substitution the Laurent polynomial associated to the triplet remains a Laurent
polynomial. One effect that still looks surprising to us
is that the case of a horizontal block of size~$1$ (i.\,e. of a basic horizontal
block) is treated differently from the case of a horizontal block of
size at least~$2$, so that the assertions of Lemmas~\ref{lemma:horizontal-block-G2-narrow}
and~\ref{lemma:horizontal-block-G2} appear to be different indeed.
Finally, since the proofs of the lemmas in Sections~\ref{section:horizontal},
\ref{section:mixed} and~\ref{section:vertical} look rather messy, we
illustrate them in Sections~\ref{section:hyperplane-sections}
and~\ref{section:examples} by a large (and hopefully
representative) sample of examples; we suspect that this may be more instructive
than reading the proofs themselves.

\section{Horizontal blocks}
\label{section:horizontal}

In
this section we write down changes of variables that agree with
horizontal blocks for Grassmannians $\G(2,k+2)$.

\begin{figure}[htbp]
\begin{center}
\includegraphics[width=4.42cm]{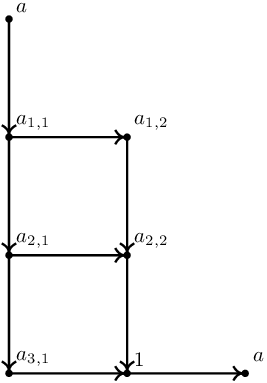}
\end{center}
\caption{Starting triplet for the Grassmannian $\G(2,5)$}
\label{figure:g25one}
\end{figure}

We start with introducing some additional auxiliary notions.

\begin{definition}\label{definition:lambda-degree}
Let $V$ be some collection of variables. Let $W\subset V$ be a subset,
and~\mbox{$\Lambda\colon W\to\Z$} be an arbitrary function.
Let $\mu$ be a Laurent monomial in the variables of $V$.
We define the \emph{$\Lambda$-degree} of
$\mu$ as
$$\deg_{\Lambda}(\mu)=\sum_{a\in W} \Lambda(a)\cdot \deg_{a}(\mu).$$
By a \emph{total degree} of $\mu$ with respect to the variables of
$W$ we mean the $\Lambda$-degree of $\mu$ for the function $\Lambda\equiv 1$,
i.\,e. the sum of degrees of $\mu$ with respect to the variables
of $W$.
\end{definition}

\begin{definition}\label{definition:00s}
Let $s\in [1,k]$. Put
$$W_{\varnothing, \varnothing, s}=\{a_{i,j}\mid i\in [1,s], j\in [1,2]\}$$
if $s<k$, and put
$$W_{\varnothing, \varnothing, k}=\{a_{i,1}\mid i\in [1,k]\}\cup\{a_{i,2}\mid i\in [1,k-1]\}.$$
We define
$$\Lambda_{\varnothing, \varnothing, s}\colon
W_{\varnothing, \varnothing, s}\to\Z$$
as $\Lambda_{\varnothing, \varnothing, s}(a_{i,2})=i-s$ for $i\in [1,s]$
and $\Lambda_{\varnothing, \varnothing, s}(a_{i,1})=i-s+1$ for $i\in [1,s]$,
$i\neq s-1$. Finally, we put~\mbox{$\Lambda_{\varnothing, \varnothing, s}(a_{s-1,1})=1$}.
\end{definition}

Now we start to describe our changes of variables.

\begin{lemma}
\label{lemma:horizontal-block-G2-start}
Suppose that $n=2$.
Let $(\QQ, V, R)$ be a triplet, and $B\subset\Ar(\QQ)$ be a horizontal
block such that the arrow $\arrow{(0,1)}{(1,1)}$ is contained in $B$.
Suppose that $V$ is a set of variables
$$V=\{a_{i,1}\mid i\in [1,k]\}\cup \{a_{i,2}\mid i\in [1,k-1]\}\cup\{a\},$$
and the following conditions hold:
\begin{itemize}
\item[(i)]
one has $R(k,2)=1$;
\item[(ii)]
$R(0,1)=R(k,3)=a$;
\item[(iii)]
for $i\in [1,k]$, $j\in [1,2]$, $(i,j)\neq (k,2)$ one has
$R(i,j)=a_{i,j}$, see Figure~\ref{figure:g25one}.
\end{itemize}

Then there exists a
change of variables $\psi$ that agrees with the triplet $(\QQ,V,R)$
and with the
block $B$
with the following properties. Let $(\QQ'',V'',R'')$
be the transformation of the triplet
$(\QQ,V,R)$ associated to $\psi$,
and let $s$ be the largest number
such that $(s,1)\in\Ver(B)$.
We can assume that
$V''$ is a set of variables
$$V''=\{a''_{i,1}\mid i\in [1,k]\}\cup
\{a''_{i,2}\mid i\in [1,k-1]\}.$$
Then $\psi^*F_{\QQ,V,R}$ is a Laurent polynomial
in the variables of $V''$
and the following assertions hold:
\begin{itemize}
\item[(I)]
the quiver $\QQ''$ does not
contain vertical arrows $\alpha$ such that $h(\alpha)=(i,j)$ for
$i\in [1,s]$, $j\in [1,2]$;
\item[(II)]
one has $R''(k,2)=1$;
\item[(III)]
for $(i,j)$ with $i\in [s,k]$,
$j\in [1,2]$, $(i,j)\neq (k,2)$,
one has $R''(i,j)=a''_{i,j}$;
\item[(IV)] for any $i\in [1,s-2]$ one has
$R''(i,1)=a''_{i,1}\cdot\bar{R}''(i)$;
\item[(V)] one has
$R''(s-1,1)=\bar{R}''(i)$;
\item[(VI)] for any $i\in [1,s-1]$ one has
$R''(i,2)=a''_{i,2}\cdot\bar{R}''(i)$;
\item[(VII)] the rational function $R''(k,3)$ is a Laurent
polynomial in the variables of $V''$ such that
$R''(k,3)$ does not depend on variables $a''_{i,j}$ with $i\in [s+1, k]$,
$j\in [1,2]$, and each of its Laurent monomials
has non-negative degree in each
of the variables~\mbox{$a''_{s,j}$, $j\in [1,2]$};
\item[(VIII)] if $s<k$, then the total degree of any Laurent
monomial of $R''(k,3)$
with respect to variables $a_{i,2}''$, $i\in [1,s]$, is non-positive;
if $s=k$, then the total degree of any Laurent monomial of $R''(k,3)$
with respect to variables $a_{i,2}''$, $i\in [1,k-1]$, is non-positive;
\item[(IX)] the $\Lambda_{\varnothing, \varnothing, s}$-degree
of any Laurent monomial of $R''(k,3)$ equals $1$.
\end{itemize}
\end{lemma}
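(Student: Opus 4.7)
The plan is to write down the change of variables $\psi$ explicitly. Once defined, most of the listed conditions will be transparent, so the real content will be to check that $\psi^{*}a$ is a Laurent polynomial satisfying~(VII)--(IX).

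I will follow the two-step recipe described right after Theorem~\ref{theorem: main}: first perform a monomial rescaling using a chosen weight variable, and then use the hypersurface equation $F_{\QQ,V,R,B}=1$ to solve for a main variable. I will take the weight variable to be $a$ and the main variable (the one eliminated via the equation) to be $a_{s-1,1}$. This is essentially forced by the shape of conditions~(III)--(VI): $a''_{s-1,1}\in V''$ plays the role of the new coordinate that replaces $a$, while $a_{s-1,1}$ becomes a function of the other new variables.

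Concretely, I will introduce Laurent polynomials $\bar{R}''(i)$ in the new variables for $i\in[1,s-1]$ and set
\begin{equation*}
\psi^{*}a_{i,j}=\begin{cases}
a''_{i,j}, & i\in[s,k],\ (i,j)\neq(k,2),\\
a''_{i,j}\cdot \bar{R}''(i), & i\in[1,s-1],\ (i,j)\neq(s-1,1),\\
\bar{R}''(s-1), & (i,j)=(s-1,1).
\end{cases}
\end{equation*}
The value $\psi^{*}a=R''(k,3)$ is then determined by substituting these expressions into $F_{\QQ,V,R,B}=1$ and solving. The key step will be to define the $\bar{R}''(i)$ inductively from $i=s-1$ down to $i=1$ so that the a priori rational expression for $\psi^{*}a$ collapses to a Laurent polynomial. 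In the base case $s=k=2$ this recipe yields $\bar{R}''(1)=a''_{1,1}+a''_{2,1}+1/a''_{1,2}$ and $\psi^{*}a=\bar{R}''(1)^{2}/a''_{1,1}$. In general, the inductive step will come from regrouping the terms of the equation that lie in rows $\le i$, so that the upper part of the block plays the role of a smaller instance of the same hypersurface problem, and an identity of shape $\bar{R}''(i)(1-P_{i})=\text{monomial}$ propagates Laurent-ness from level $i+1$ down to level $i$.

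Once $\psi$ has been written down, the remaining work will be direct bookkeeping. Conditions~(I)--(III) and~(IV)--(VI) are immediate from the form of $\psi^{*}$ and from $\Ar(\QQ'')=\Ar(\QQ)\setminus B$. Birationality of $\psi$ will be obtained by inverting it: the $a''_{i,j}$ with $i\ge s$ are just the corresponding $a_{i,j}$, the values $\bar{R}''(i)$ recover $a_{i,1}$ for $i<s$, and $a''_{s-1,1}$ is recovered from the monomial denominator of $\psi^{*}a$. Condition~(VII) is precisely the statement that the cascade of substitutions clears all denominators, and this is built into the recursive definition of the $\bar{R}''(i)$. Conditions~(VIII) and~(IX) will follow from tracking the $\Lambda_{\varnothing,\varnothing,s}$-weights through the substitution, using the fact that the exponents in Definition~\ref{definition:00s} are calibrated so that every Laurent monomial of $\psi^{*}a$ is forced to have $\Lambda$-degree~$1$ and non-positive degree in each $a''_{i,2}$. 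The main, and essentially the only, obstacle will be finding the correct recursive recipe for $\bar{R}''(i)$ and verifying that it really produces a monomial denominator in $\psi^{*}a$; equivalently, one must exhibit an explicit telescoping between the hypersurface equations of successive layers of the block. I expect this step to split into the cases $s=k$ (where the top row lacks $a''_{k,2}$ and the base formula is shorter) and $s<k$ (where the spectator variable $a''_{s,2}$ enters), and to require some separate treatment when $s=1$, consistent with the paper's remark that basic horizontal blocks of size~$1$ are handled differently from those of size~$\ge 2$.
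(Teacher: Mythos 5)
There is a genuine gap: the core of the lemma is precisely the explicit construction of the change of variables and the verification that $\psi^{*}a$ is a Laurent polynomial satisfying (VII)--(IX), and this is exactly what you defer as ``the main obstacle''. Your ansatz for $\psi^{*}a_{i,j}$ does have the right shape (and your base case $s=k=2$ matches the paper's formulas, cf.\ Example~\ref{example:22}), but no recipe for the $\bar{R}''(i)$ is actually given, no telescoping identity is exhibited, and the degree bookkeeping for (VIII)--(IX) is asserted rather than derived; as it stands the proposal is a plan, not a proof.

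Moreover, the specific route you sketch is unlikely to work as stated, and it inverts the roles of the variables compared with what actually succeeds. The block equation $F_{\QQ,V,R,B}=1$ contains $a$ in exactly one term, $a_{1,1}/a$, but contains $a_{s-1,1}$ in two terms (namely $a_{s,1}/a_{s-1,1}$ and $a_{s-1,1}/a_{s-2,1}$, or $a_{1,1}/a$ and $a_{2,1}/a_{1,1}$ when $s=2$), so solving it for your proposed main variable $a_{s-1,1}$ is a quadratic, not a birational, operation; and using $a$ as the weight variable forces nonzero weights on the bottom row of the block, which clashes with $a_{k,2}=1$ when $s=k$ and with keeping $R''(i,j)=a''_{i,j}$ for $i\ge s$. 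The paper's proof does the opposite and needs no induction over rows: it rescales by the weight variable $a_{s-1,1}$ with weights $\wt(i,j)=s-i$ (so the bottom row of the block is untouched), which makes the equation linear in $a'_{s-1,1}$ of the form $M'a'_{s-1,1}-P'$ with $M'$ a monomial, then performs the single substitution $a''_{s-1,1}=(M'a'_{s-1,1}-P')/M'$ and eliminates $a$, obtaining
$a=\frac{\delta''}{a''_{s-1,1}}\bigl(\frac{M''a''_{s-1,1}+P''}{M''}\bigr)^{s}$;
all your $\bar{R}''(i)$ then come out as powers $\bigl(\frac{M''a''_{s-1,1}+P''}{M''}\bigr)^{s-i}$ of one Laurent polynomial, and (VII)--(IX) follow by direct inspection of these formulas. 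To repair your proposal you would need to either reproduce this one-shot rescaling-plus-shear mechanism or genuinely produce the recursive construction you allude to, which you have not done.
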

\begin{proof}
If the block $B$ consists of a single arrow $\arrow{(0,1)}{(1,1)}$, then
equation $F_{\QQ,V,R,B}=1$ is equivalent to $a=a_{1,1}$.
In this case we use the latter equation to exclude the variable~$a$,
and make a change of variables
$$a_{i,j}=a_{i,j}'', \quad i\in [1,k], j\in [1,2], (i,j)\neq (k,2).$$
Put
$$
V''=
\{a_{i,1}''\mid i\in [1,k]\}\cup \{a_{i,2}''\mid i\in [1,k-1]\}.
$$
We define $\psi\colon \TT(V'')\dasharrow \TT(V)$ to be the
change of variables from $a_{i,j}$ to $a''_{i,j}$.
We define the quiver $\QQ''$ so that
$\Ver(\QQ'')=\Ver(\QQ)$
and $\Ar(\QQ'')$ consists of all arrows of $\Ar(\QQ)$
except for the arrow $\arrow{(0,1)}{(1,1)}$. Finally,
we put $R''(i,j)=\psi^*R(i,j)$.
Now the assertion
of the lemma is obvious. Therefore, we assume that the size of the block
$B$ is greater than~$1$, so that $s\ge 2$.

Abusing notation a little bit, we assign
$a_{k,2}=R(k,2)=1$; we do not mean
that $a_{k,2}$ is a variable in this case (in particular, we will
ignore it while computing total degrees with respect to any collection
of variables), but this helps us to keep formulas more neat.
Equation $F_{\QQ,V,R,B}=1$ is equivalent to
\begin{equation}\label{eq:main-variable-horizontal-start}
\frac{a_{1,1}}{a}=
1-
\sum\limits_{\substack{\alpha\in B,\\ \alpha\neq \arrow{(0,1)}{(1,1)}}}
\frac{R\left(h(\alpha)\right)}{R\left(t(\alpha)\right)}=
1-\sum\limits_{i\in [1,s-1],\ j\in [1,2]}
\frac{a_{i+1,j}}{a_{i,j}}.
\end{equation}
We choose $a_{s-1,1}$ to be the weight variable
and~$a$ to be the main variable.

To start with, we make the following change of variables of $V$.
We put $a_{s-1,1}=a_{s-1,1}'$ and we put
\begin{equation}\label{eq:weights-horizontal-start}
a_{i,j}=a_{i,j}'\cdot \left(a_{s-1,1}'\right)^{\wt(i,j)},
\quad
a'=a\cdot \left(a_{s-1,1}'\right)^{\wt(0,1)}
\end{equation}
for the following choice of weights $\wt(i,j)$, $(i,j)\neq (s-1, 1)$.
For any $(i,j)$ with~\mbox{$i\in [1,s]$, $j\in [1,2]$}, and for $(i,j)=(0,1)$
we put
\begin{equation}\label{eq:s-i-horizontal-start}
\wt(i,j)=s-i.
\end{equation}
For any $(i,j)$ with $i\in [s+1, k]$, $j\in [1,2]$, we put $\wt(i,j)=0$.
In particular, this gives~\mbox{$\wt(k,2)=0$}, so that we can
define $a'_{k,2}=a_{k,2}=1$.
Also, \eqref{eq:s-i-horizontal-start} implies that~\mbox{$\wt(s-1,1)=1$},
although we don't mean to use
$\wt(s-1,1)$ in~\eqref{eq:weights-horizontal-start}.
Note that for any arrow $\alpha\in B$ one has
$$\wt\big(t(\alpha)\big)=\wt\big(h(\alpha)\big)+1,$$
and for any
$\alpha\in\In(B)$ one has
$\wt(t(\alpha))=\wt(h(\alpha))$.
In particular, the weight of any non-trivial Laurent monomial appearing
on the right hand side of~\eqref{eq:main-variable-horizontal-start}
equals $-1$.

Put
$$
V'=
\{a_{i,1}'\mid i\in [1,k]\}\cup \{a_{i,2}'\mid i\in [1,k-1]\}\cup\{a'\}.
$$
Define a collection of variables
$W'_{\varnothing,\varnothing,s}$ and a function
$$
\Lambda'_{\varnothing,\varnothing,s}\colon W'_{\varnothing,\varnothing,s}\to\Z
$$
replacing the variables $a_{i,j}$ by $a'_{i,j}$ in
Definition~\ref{definition:00s}.
We rewrite~\eqref{eq:main-variable-horizontal-start} as
\begin{equation}\label{eq:main-variable-1-prelim-horizontal-start-s-ge-3}
\frac{a_{1,1}'}{a'\cdot a_{s-1,1}'}=
1-\frac{1}{a_{s-1,1}'}\cdot\left(
a_{s,1}'+\frac{1}{a_{s-2,1}'}+
\sum\limits_{\substack{i\in [1,s-1],\ j\in [1,2]\\
(i,j)\neq (s-1,1), (s-2,1)
}}
\frac{a_{i+1,j}'}{a_{i,j}'}
\right)
\end{equation}
if $s>2$, and as
\begin{equation}\label{eq:main-variable-1-prelim-horizontal-start-s-2}
\frac{1}{a'\cdot a_{1,1}'}=
1-\frac{1}{a_{1,1}'}\cdot\left(
a_{2,1}'+\frac{a_{2,2}'}{a_{1,2}'}
\right)
\end{equation}
if $s=2$. Note that
the total degree with respect to variables $a_{i,2}'$, $i\in [1,s]$,
of any Laurent monomial appearing on the right hand side
of~\eqref{eq:main-variable-1-prelim-horizontal-start-s-ge-3}
and~\eqref{eq:main-variable-1-prelim-horizontal-start-s-2}
is non-positive; actually, one can make a more precise observation:
the total degree with respect to variables~\mbox{$a_{i,2}'$, $i\in [1,s]$},
of any Laurent monomial appearing on the right hand side
of~\eqref{eq:main-variable-1-prelim-horizontal-start-s-ge-3}
and~\eqref{eq:main-variable-1-prelim-horizontal-start-s-2} is
zero if~\mbox{$s<k$} and is non-positive if $s=k$ (the latter exception appearing
because we ignore~$a_{k,2}=1$ when we compute the total degree).
Similarly, one can check that the~\mbox{$\Lambda'_{\varnothing, \varnothing, s}$-degree}
of any non-trivial
Laurent monomial appearing on the right hand side
of~\eqref{eq:main-variable-1-prelim-horizontal-start-s-ge-3}
and~\eqref{eq:main-variable-1-prelim-horizontal-start-s-2} equals~$1$.

Put $\delta'=a_{1,1}'$ if $s>2$, and put $\delta'=1$ if $s=2$.
By~\eqref{eq:main-variable-1-prelim-horizontal-start-s-ge-3}
and~\eqref{eq:main-variable-1-prelim-horizontal-start-s-2}
we have
\begin{equation}\label{eq:main-variable-1-horizontal-start}
\frac{\delta'}{a'\cdot a_{s-1,1}'}=
1-\frac{1}{a_{s-1,1}'}\cdot\frac{P'}{M'}=
\frac{M'\cdot a_{s-1,1}'-P'}{M'\cdot a_{s-1,1}'}.
\end{equation}
Here $P'$ is a polynomial that depends
only on the
variables $a_{i,j}'$ with $i\in [1,s]$, $j\in [1,2]$,
except for $a_{s-1, 1}'$,
and
$$M'=\prod\limits_{(i,j)\in\mathcal V} a'_{i,j},$$
where
$$\mathcal V=\{(i,j)\mid
i\in [1,s-1], j\in [1,2], (i,j)\neq (s-1,1)\}.$$
As above, the total degree with respect to variables $a_{i,2}'$, $i\in [1,s]$,
of any Laurent monomial of the ratio~\mbox{$P'/M'$},
and thus of any Laurent monomial appearing on the right hand side
of~\eqref{eq:main-variable-1-horizontal-start},
is non-positive.
Similarly, the
$\Lambda'_{\varnothing, \varnothing, s}$-degree of any
Laurent monomial
of the ratio~\mbox{$P'/M'$},
and thus of any Laurent monomial appearing on the right hand side
of~\eqref{eq:main-variable-1-horizontal-start},
equals~$1$.

We rewrite~\eqref{eq:main-variable-1-horizontal-start}
as
\begin{equation}\label{eq:main-variable-2-horizontal-start}
a'=\frac{M'\cdot \delta'}{M'\cdot a_{s-1,1}'-P'}.
\end{equation}
Now we put
\begin{equation}\label{eq:weight-variable-first-step-horizontal-start}
a_{s-1,1}''=\frac{M'\cdot a_{s-1,1}'-P'}{M'},
\end{equation}
and we put $a_{i,j}''=a_{i,j}'$ for all
$i\in [1,k]$, $j\in [1,2]$, such that
$(i,j)\neq (s-1,1)$
(in particular, this gives $a''_{k,2}=a'_{k,2}=1$).
Then
\begin{equation}\label{eq:weight-variable-change-horizontal-start}
a_{s-1,1}=a_{s-1,1}'=\frac{M''\cdot a_{s-1,1}''+P''}{M''},
\end{equation}
where $P''$ and $M''$ are obtained from $P'$ and $M'$ by replacing
the variables $a_{i,j}'$ by the corresponding
variables $a_{i,j}''$,
so that $M''$ is the monomial
$$M''=\prod\limits_{(i,j)\in\mathcal V} a''_{i,j}.$$
Again we observe that the total degree with respect to variables
$a_{i,2}''$, $i\in [1,s]$, of any Laurent monomial of the ratio $P''/M''$,
and thus of any Laurent monomial appearing on the right hand side
of~\eqref{eq:weight-variable-change-horizontal-start},
is non-positive.
Similarly,
we define a collection of variables~\mbox{$W''_{\varnothing,\varnothing,s}$}
and a function
$$
\Lambda''_{\varnothing,\varnothing,s}\colon
W''_{\varnothing,\varnothing,s}\to\Z
$$
replacing the variables $a_{i,j}$ by $a''_{i,j}$ in
Definition~\ref{definition:00s}, and observe that the
$\Lambda''_{\varnothing, \varnothing, s}$-degree of any
Laurent monomial
of the ratio $P''/M''$,
and thus of any Laurent monomial appearing on the right hand side
of~\eqref{eq:weight-variable-change-horizontal-start},
equals~$1$.

We can rewrite~\eqref{eq:main-variable-2-horizontal-start} as
\begin{equation}\label{eq:main-variable-3-horizontal-start}
a'=\frac{\delta''}{a_{s-1,1}''},
\end{equation}
where $\delta''=a_{1,1}''$ if $s>2$, and $\delta''=1$ if $s=2$.
We see that
$$\deg_{\Lambda''_{\varnothing, \varnothing, s}}(\delta'')=2-s.$$

By~\eqref{eq:weights-horizontal-start}
and~\eqref{eq:weight-variable-change-horizontal-start}
one has
\begin{equation}\label{eq:i-1-horizontal-start}
a_{i,1}=a_{i,1}'\cdot \left(a_{s-1,1}'\right)^{s-i}=
a_{i,1}''\cdot\left(\frac{M''\cdot a_{s-1,1}''+P''}{M''}\right)^{s-i}
\end{equation}
for any $i\in [1,s]$, $i\neq s-1$.
Also,~\eqref{eq:weights-horizontal-start} implies that
\begin{equation}\label{eq:i-2-horizontal-start}
a_{i,2}=a_{i,2}'\cdot \left(a_{s-1,1}'\right)^{s-i}=
a_{i,2}''\cdot\left(\frac{M''\cdot a_{s-1,1}''+P''}{M''}\right)^{s-i}
\end{equation}
for any $ i\in [1,s]$.
Finally, \eqref{eq:weights-horizontal-start}
and~\eqref{eq:main-variable-3-horizontal-start}
imply that
\begin{equation}\label{eq:a-horizontal-start}
a=a'\cdot \left(a_{s-1,1}'\right)^{s}=
\frac{\delta''}{a_{s-1,1}''}\cdot\left(\frac{M''\cdot a_{s-1,1}''+P''}{M''}\right)^{s}.
\end{equation}
Once again we notice that the total degree with respect to variables
$a_{i,2}''$, $i\in [1,s]$, of any Laurent monomial
appearing on the right hand side of~\eqref{eq:a-horizontal-start}
is non-positive.
Also, we see that the right hand side
of~\eqref{eq:a-horizontal-start} does not depend on variables
$a''_{i,j}$ with~\mbox{$i\in [s+1, k]$, $j\in [1,2]$}, and each of its Laurent monomials
has non-negative degree in each of the variables~\mbox{$a''_{s,j}$, $j\in [1,2]$}.
Similarly, we compute
$$
\deg_{\Lambda''_{\varnothing, \varnothing, s}}
\left(\frac{\delta''}{a''_{s-1,1}}\right)=1-s,
$$
and see that the
$\Lambda''_{\varnothing, \varnothing, s}$-degree of any
Laurent monomial appearing on the right hand side
of~\eqref{eq:a-horizontal-start} equals~$1$.

Equation~\eqref{eq:a-horizontal-start}
allows us to exclude the variable
$a$. Now we are going to show that after making this exclusion
and changing variables $a_{i,j}$ to %$a''$ and
$a_{i,j}''$ the Laurent polynomial~$F_{\QQ, V, R}$
remains a Laurent polynomial.

Let $\alpha=\arrow{(i,1)}{(i,2)}$ be an inner arrow for $B$.
Suppose that $i\neq s-1$.
Then
\begin{equation}\label{eq:inner-1-horizontal-start}
\frac{R\left(h(\alpha)\right)}{R\left(t(\alpha)\right)}=
\frac{a_{i,2}}{a_{i,1}}=\frac{a'_{i,2}}{a'_{i,1}}=
\frac{a''_{i,2}}{a''_{i,1}}.
\end{equation}
If $\alpha=\arrow{(s-1,1)}{(s-1,2)}$, then
\begin{equation}\label{eq:inner-3-horizontal-start}
\frac{R\left(h(\alpha)\right)}{R\left(t(\alpha)\right)}=
\frac{a_{s-1,2}}{a_{s-1,1}}=
\frac{a'_{s-1,2}\cdot a'_{s-1,1}}{a'_{s-1,1}}=a'_{s-1,2}=a''_{s-1,2}.
\end{equation}

Put
$$
V''=
\{a_{i,1}''\mid i\in [1,k]\}\cup \{a_{i,2}''\mid i\in [1,k-1]\}.
$$
We define
$$\psi\colon \TT(V'')\dasharrow \TT(V)$$
to be the change of variables from $a_{i,j}$ to $a''_{i,j}$.
We define the quiver $\QQ''$ so that~\mbox{$\Ver(\QQ'')=\Ver(\QQ)$} and
$\Ar(\QQ'')=\Ar(\QQ)\setminus B$. Finally, we put $R''(i,j)=\psi^*R(i,j)$.

Denote by $\alpha_{f}$ the arrow $\arrow{(k,2)}{(k,3)}$.
Denote by $C$ the set of
arrows~\mbox{$\alpha\in\Ar(\QQ)\setminus\{\alpha_f\}$}
such that $h(\alpha)=(i,j)$
for some $i\in [s+1, k]$, $j\in [1,2]$.
Then the set $\Ar(\QQ)$ is a disjoint union
of the sets $B$, $\In(B)$, $C$ and $\{\alpha_f\}$.

One has
\begin{multline*}
\psi^*F_{\QQ,V,R}=\psi^*\left(\sum\limits_{\alpha\in\Ar(\QQ)}
\frac{R\left(h(\alpha)\right)}{R\left(t(\alpha)\right)}\right)=\\
=
\psi^*\left(
\sum\limits_{\alpha\in B}
\frac{R\left(h(\alpha)\right)}{R\left(t(\alpha)\right)}
+
\sum\limits_{\alpha\in\In(B)}
\frac{R\left(h(\alpha)\right)}{R\left(t(\alpha)\right)}
+
\sum\limits_{\alpha\in C}
\frac{R\left(h(\alpha)\right)}{R\left(t(\alpha)\right)}
+
\frac{R\left(h(\alpha_f)\right)}{R\left(t(\alpha_f)\right)}
\right)
=\\ =
1+
\psi^*\left(
\sum\limits_{\alpha\in\In(B)}
\frac{R\left(h(\alpha)\right)}{R\left(t(\alpha)\right)}
+
\sum\limits_{\alpha\in C}
\frac{R\left(h(\alpha)\right)}{R\left(t(\alpha)\right)}
+
\frac{R\left(h(\alpha_f)\right)}{R\left(t(\alpha_f)\right)}
\right).
\end{multline*}

If $\alpha\in\In(B)$,
then
$\psi^*\left(
\frac{R\left(h(\alpha)\right)}{R\left(t(\alpha)\right)}\right)$
is a Laurent monomial in the variables of $V''$
by~\eqref{eq:inner-1-horizontal-start}
and~\eqref{eq:inner-3-horizontal-start}.

If $\alpha\in C$, then
$$
\psi^*\left(\frac{R\left(h(\alpha)\right)}{R\left(t(\alpha)\right)}\right)=
\psi^*\left(\frac{a_{h(\alpha)}}{a_{t(\alpha)}}\right)=
\frac{a''_{h(\alpha)}}{a''_{t(\alpha)}}
$$
by conditions~(ii) and~(iii), because
the variables $a_{i,j}$ with $i\in [s,k]$, $j\in [1,2]$,
were not changed when passing from
$V$ to $V'$ and further to $V''$.

Finally one can notice that
$$
\psi^*\left(\frac{R\left(h(\alpha_f)\right)}
{R\left(t(\alpha_f)\right)}\right)=
\psi^*R(k,3)=\psi^*a
$$
is a Laurent polynomial in the variables of $V''$
by~\eqref{eq:a-horizontal-start}.

Therefore, we see that
$\psi^*F_{\QQ,V,R}$ is a Laurent polynomial in the variables of $V''$.

Note that assertion (I) of the lemma
holds by definition of $\QQ''$.
The variables $a_{i,j}\in V$ with $i\in [s, k]$, $j\in [1,2]$,
were not changed when passing from
$V$ to $V'$ and further to~$V''$.
From this we conclude that
assertions~(II) and~(III) of the lemma hold.
Assertions~\mbox{(IV),~(V)} and~(VI) hold
due to equations~\eqref{eq:i-1-horizontal-start},
\eqref{eq:weight-variable-change-horizontal-start}
and~\eqref{eq:i-2-horizontal-start},
respectively.
Finally, validity of assertions~(VII), (VIII) and~(IX)
follows from~\eqref{eq:a-horizontal-start}.
\end{proof}

\begin{remark}
\label{remark:horizontal-block-G2-start}
In Lemma~\ref{lemma:horizontal-block-G2-start} we worked with a hypersurface given by equation
$$1-F_{\QQ,V,R,B}=0.$$ However, in the proof of Proposition~\ref{proposition: periods} we will need
to analyze a tubular neighborhood of the latter hypersurface, i.\,e. to work with an equation
$1-F_{\QQ,V,R,B}=U$. In this situation equation~\eqref{eq:main-variable-2-horizontal-start}
takes the form
\begin{equation}\label{eq:main-variable-2-horizontal-start-U}
a'=\frac{M'\cdot \delta'}{(1-U)\cdot M'\cdot a_{s-1,1}'-P'}.
\end{equation}
\end{remark}

In the rest of this section, as well as in Sections~\ref{section:mixed}
and~\ref{section:vertical}, we will have to keep track of
the history of changes of variables performed up to some point.
This will be done with the help of the following definitions.

\begin{definition}\label{definition:history}
Let $n=2$. Let $\QQ$ be an admissible quiver,
and let $r\in [1,k]$.
We say that~\mbox{$(\MM,\WW, \gamma)$}, where
$\gamma\in [1,r]$ and $\MM$ and $\WW$ are subsets of $[1,\gamma-1]$, is a
\emph{block history} of~\mbox{$(\QQ,r)$} if the following conditions hold:
\begin{itemize}
\item
one has $\WW=\varnothing$ if and only if $\gamma=1$ (so that
one also has $\MM=\varnothing$ in this case);
\item
if $\WW\neq\varnothing$, then $|\MM|=|\WW|-1$, and
one has $\MM=\{m_1,\ldots, m_{|\MM|}\}$
and~\mbox{$\WW=\{w_0,w_1,\ldots,w_{|\MM|}\}$} with
$$w_0<w_0+1=m_1< w_1<\ldots < w_{|\MM|-1}+1=
m_{|\MM|}< w_{|\MM|}<w_{|\MM|}+1=\gamma.$$
\end{itemize}
\end{definition}

\begin{definition}\label{definition:MWgamma}
Let $r\in [1,k]$, and let $(\MM,\WW, \gamma)$ be a
block history of $(\QQ,r)$.
Put
$$W_{\MM, \WW, \gamma,r}=\{a_{i,1}\mid i\in [1,r]\}\cup
\{a_{i,2}\mid i\in [1,\gamma-1]\setminus\MM\}\cup\{a_{r,2}\}$$
if $r<k$, and put
$$W_{\MM, \WW, \gamma,k}=\{a_{i,1}\mid i\in [1,k]\}\cup
\{a_{i,2}\mid i\in [1,\gamma-1]\setminus\MM\}\}$$
if $r=k$.
For any $i<\max\WW$ (i.\,e. for any $i<\gamma-1$)
we put $w(i)=\min\{w\in\WW\mid w>i\}$.
We define a function
$$
\Lambda_{\MM,\WW,\gamma,r}\colon
W_{\MM,\WW,\gamma,r}\to\Z
$$
as follows.
If $i\in\WW$, then
we put $\Lambda_{\MM,\WW,\gamma,r}(a_{i,1})=1$ and
$\Lambda_{\MM,\WW,\gamma,r}(a_{i,2})=-1$.
If~\mbox{$i\not\in\WW$} and $i<\gamma-1$,
then we put
$$\Lambda_{\MM,\WW,\gamma,r}(a_{i,1})=i-w(i)$$
and
$$\Lambda_{\MM,\WW,\gamma,r}(a_{i,2})=i-w(i)-1.$$
If~\mbox{$\gamma\le i\le r$}, then we put
$\Lambda_{\MM,\WW,\gamma,r}(a_{i,1})=1$.
Finally, if $r<k$, then
we put~\mbox{$\Lambda_{\MM,\WW,\gamma,r}(a_{r,2})=0$}.

If the variables of the set $W_{\MM,\WW,\gamma,r}$ are clearly labeled by some
set of indices $\{(i,j)\}$ we will sometimes write $\Lambda_{\MM,\WW,\gamma,r}(i,j)$
instead of $\Lambda_{\MM,\WW,\gamma,r}(a_{i,j})$.
\end{definition}

Note that Definition~\ref{definition:00s} is a particular case of
Definition~\ref{definition:MWgamma} for $\MM=\WW=\varnothing$
and~\mbox{$\gamma=r=s$}.

The following elementary observation will be rather useful
for the remaining lemmas of this section.

\begin{remark}\label{remark:stupid}
Let $V$ be a set of variables, and $F$ be a Laurent polynomial in the variables
of $V$. Let $V'$ be some other set of variables. Consider a rational map
$\psi\colon \TT(V')\dasharrow \TT(V)$. Let $W\subset V$ and $W'\subset V'$ be some
subsets of variables. Choose two functions
$\Lambda\colon W\to\Z$ and $\Lambda'\colon W'\to\Z$.
Suppose that the rational
function $\psi^*F$ is a Laurent
polynomial in the variables of $V'$.
Suppose that for any $a\in W$ the rational function
$\psi^*a$ is a Laurent
polynomial in the variables of $V'$, and for any Laurent monomial
$\mu'$ of $\psi^*a$ one has
$$\deg_{\Lambda'}(\mu')=\deg_{\Lambda}(a).$$
Then for any Laurent monomial $\mu$ of $F$ one has
$$\deg_{\Lambda'}(\psi^*\mu)=\deg_{\Lambda}(\mu).$$
\end{remark}

Now we return to our changes of variables.

\begin{lemma}
\label{lemma:horizontal-block-G2}
Suppose that $n=2$.
Let $(\QQ, V, R)$ be a triplet, and $B\subset\Ar(\QQ)$ be a horizontal
block such that the arrow $\arrow{(0,1)}{(1,1)}$ is not contained in $B$.
Let $r$ be the smallest number such that
$(r,1)\in\Ver(B)$. Suppose that the size of the block $B$
is greater than $1$, so that $B$ is not a basic block.

Suppose that there is a
block history $(\MM, \WW, r)$, i.\,e. one with $\gamma=r$
in the notation of Definition~\ref{definition:history},
of $(\QQ,r)$ such that $V$ is a set of variables
$$V=\{a_{i,1}\mid i\in [1,k]\}\cup \{a_{i,2}\mid i\in [1,k-1]\setminus\MM\}.$$
Suppose that there are rational functions
$\bar{R}(i)$, $i\in [1,r-1]$, in the variables of $V$
such that the following conditions hold:
\begin{itemize}
\item[(i)]
the quiver $\QQ$ does not
contain vertical arrows $\alpha$ such that $h(\alpha)=(i,j)$ for
$i\in [1,r]$, $j\in [1,2]$;
\item[(ii)]
one has $R(k,2)=1$ (in what follows, we assign $a_{k,2}=R(k,2)=1$,
abusing notation a little bit);
\item[(iii)]
for $(i,j)$ with $i\in [r, k]$,
$j\in [1,2]$, $(i,j)\neq (k,2)$,
one has $R(i,j)=a_{i,j}$;
\item[(iv)] for any $i\in [1,r-1]\setminus\WW$ one has
$R(i,1)=a_{i,1}\cdot\bar{R}(i)$;
\item[(v)] for any $i\in \WW$ one has
$R(i,1)=\bar{R}(i)$;
\item[(vi)] for any $i\in [1,r-1]\setminus\MM$ one has
$R(i,2)=a_{i,2}\cdot\bar{R}(i)$;
\item[(vii)] for any $i\in \MM$ one has
$$R(i,2)=\frac{a_{i+1,2}}{a_{w(i),1}}
\cdot\bar{R}(i),$$
where
$w(i)=\min\{w\in\WW\mid w>i\}$;
\item[(viii)] the rational function $R(k,3)$ is a Laurent
polynomial in the variables of $V$ such that
$R(k,3)$ does not depend on variables $a_{i,j}$ with $i\in [r+1, k]$,
$j\in [1,2]$, and each of its
Laurent monomials has non-negative degree in each
of the variables $a_{r,j}$, $j\in [1,2]$;
\item[(ix)] the total degree of any Laurent monomial of $R(k,3)$
with respect to variables $a_{i,2}$, $i\in [1,r]\setminus\MM$, is non-positive;
\item[(x)] the $\Lambda_{\MM,\WW,r,r}$-degree of any Laurent
monomial of $R(k,3)$ equals~$1$.
\end{itemize}

Then there exists a
change of variables $\psi$ that agrees with the triplet $(\QQ,V,R)$
and with the
block $B$ with the following properties.
Let $(\QQ'',V'',R'')$ be the transformation of the triplet
$(\QQ,V,R)$ associated to $\psi$,
and let $s$ be the largest number
such that $(s,1)\in\Ver(B)$.
Then $\psi^*F_{\QQ,V,R}$ is a Laurent polynomial
in the variables of $V''$.
Moreover, there is a block history $(\MM'', \WW'',s)$
of $(\QQ'', s)$ with $\MM''=\MM\cup\{r\}$ such that
$V''$ is a set of variables
$$V''=\{a''_{i,1}\mid i\in [1,k]\}\cup
\{a''_{i,2}\mid i\in [1,k-1]\setminus\MM''\},$$
and conditions (i)--(x) hold after replacing
$\QQ$, $V$, $R$, $\MM$, $\WW$ and $r$
by~$\QQ''$, $V''$, $R''$, $\MM''$, $\WW''$ and~$s$,
respectively.
\end{lemma}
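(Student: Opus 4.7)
The plan is to mimic the two-step strategy of Lemma~\ref{lemma:horizontal-block-G2-start}, adapted to the situation where the block $B$ lies strictly below the top row. Let $r$ be as in the hypotheses and $s$ be the largest row index with $(s,1)\in\Ver(B)$. I would choose $a_{s-1,1}$ as the weight variable and $a_{r,2}$ as the main variable. Assigning weights $\wt(i,j)=s-i$ for $(i,j)$ with $i\in[r,s]$, $j\in[1,2]$, and $\wt(i,j)=0$ otherwise, I perform the monomial substitution $a_{i,j}=a'_{i,j}(a'_{s-1,1})^{s-i}$ for $(i,j)\neq(s-1,1)$ (with $a_{s-1,1}=a'_{s-1,1}$). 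Because $\wt\big(h(\alpha)\big)-\wt\big(t(\alpha)\big)=-1$ for every $\alpha\in B$, each summand of $F_{\QQ,V,R,B}$ acquires weight $-1$ in $a'_{s-1,1}$, and the equation $F_{\QQ,V,R,B}=1$ rewrites as
$$
\frac{a'_{r+1,2}}{a'_{r,2}\cdot a'_{s-1,1}}+\frac{P'}{M'\cdot a'_{s-1,1}}=1,
$$
where $M'$ is a monomial common denominator and $P'$ a polynomial, both in the variables $a'_{i,j}$ with $(i,j)$ in a certain index set that does not contain $(s-1,1)$ or $(r,2)$.

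Next I would introduce the shifted weight variable $a''_{s-1,1}=(M'a'_{s-1,1}-P')/M'$ (keeping $a''_{i,j}=a'_{i,j}$ for the remaining indices) and solve for the main variable, obtaining the formula
$$
a_{r,2}=\frac{a''_{r+1,2}}{a''_{s-1,1}}\cdot\left(\frac{M''a''_{s-1,1}+P''}{M''}\right)^{s-r},
$$
together with the analogous identities $a_{s-1,1}=(M''a''_{s-1,1}+P'')/M''$ and $a_{i,j}=a''_{i,j}\big((M''a''_{s-1,1}+P'')/M''\big)^{s-i}$ for the other in-block indices. This defines the change of variables $\psi$. For the bookkeeping I put $\MM''=\MM\cup\{r\}$ and $\WW''=\WW\cup\{s-1\}$, which verifies the alternation of Definition~\ref{definition:history} with $\gamma''=s$, because $w_{|\MM|}+1=r$ in the old history and $r<s-1$ (the block is not basic).

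With these formulas, conditions (i)--(vii) for the new triplet read off directly: the formula for $a_{r,2}$ matches condition (vii) for the new index $r\in\MM''$ with $w''(r)=s-1$; the factors $\big((M''a''_{s-1,1}+P'')/M''\big)^{s-i}$ serve as $\bar R''(i)$ for $i\in[r,s-1]$; and for $i<r$ nothing changes, so the old $\bar R(i)$ still works. To verify that $\psi^*F_{\QQ,V,R}$ is a Laurent polynomial, I decompose $\Ar(\QQ)=B\sqcup\In(B)\sqcup C\sqcup\{\alpha_f\}$ as in the proof of Lemma~\ref{lemma:horizontal-block-G2-start}: the $B$-sum pulls back to $1$; inner arrows (the horizontal arrows $\arrow{(i,1)}{(i,2)}$ for $i\in[r,s]$) pull back to Laurent monomials, the key cases being $i=r$, which gives $a''_{r+1,2}/(a''_{s-1,1}a''_{r,1})$, and $i=s-1$, which gives $a''_{s-1,2}$; arrows in $C$ are unaffected since their endpoint variables are preserved; and the contribution of $\alpha_f$ is $R(k,3)$, a Laurent polynomial in $V$ by hypothesis (viii). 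Substituting the formulas for $a_{r,j}$ into each monomial of $R(k,3)$ and expanding the non-negative power of $(M''a''_{s-1,1}+P'')/M''$ by the binomial theorem produces a Laurent polynomial in $V''$.

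The main obstacle is the bookkeeping for conditions (viii)--(x) on the new function $R''(k,3)$. Non-dependence on variables $a''_{i,j}$ with $i\in[s+1,k]$ follows because $M''$ and $P''$ involve only $a''_{i,j}$ with $i\in[r,s]$. Non-negativity of degrees in $a''_{s,j}$ follows because $P''$ has degree at most $1$ in $a''_{s,1}$ and $a''_{s,2}$ (these variables appear in $P''$ only through the terms coming from $\arrow{(s-1,1)}{(s,1)}$ and $\arrow{(s-1,2)}{(s,2)}$ after the weight shift). For the total-degree condition (ix) and the $\Lambda$-degree condition (x), I would invoke Remark~\ref{remark:stupid}: it suffices to check that the $\Lambda_{\MM'',\WW'',s,s}$-degree of $\psi^*a_{i,j}$ equals the $\Lambda_{\MM,\WW,r,r}$-degree of $a_{i,j}$ for every variable $a_{i,j}$ appearing in $R(k,3)$, which reduces the problem to a finite number of degree computations governed by Definition~\ref{definition:MWgamma}.
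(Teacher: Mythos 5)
Your proposal is correct and follows the paper's own proof essentially step for step: the same weight variable $a_{s-1,1}$ and main variable $a_{r,2}$, the same monomial weight change and shifted variable $a''_{s-1,1}=(M'a'_{s-1,1}-P')/M'$, the same choice $\MM''=\MM\cup\{r\}$, $\WW''=\WW\cup\{s-1\}$, and the same appeal to Remark~\ref{remark:stupid} for conditions (ix)--(x). The only slip is bookkeeping: for $r>1$ the decomposition $\Ar(\QQ)=B\sqcup\In(B)\sqcup C\sqcup\{\alpha_f\}$ omits the horizontal arrows $\arrow{(i,1)}{(i,2)}$ with $i\in[1,r-1]$, which the paper handles as a separate set $A$ using conditions (iv)--(vii) — exactly the observation you already make when you note that the variables with row index below $r$ are untouched, so the fix is immediate.
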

\begin{proof}
The proof is similar to that of Lemma~\ref{lemma:horizontal-block-G2-start},
with the only difference that we choose $a_{r,2}$ to be the main variable
and $a_{s-1,1}$ to be the weight variable.
\end{proof}

%\begin{remark}[cf. Remark~\ref{remark:horizontal-block-G2-start}]
%\label{remark:horizontal-block-G2}
%If in Lemma~\ref{lemma:horizontal-block-G2} we replace the equation
%$1-F_{\QQ,V,R,B}=0$ by
%$1-F_{\QQ,V,R,B}=U$, then equation~\eqref{eq:main-variable-2}
%takes the form
%\begin{equation}
%\label{eq:main-variable-2-U}
%a_{r,2}'=\frac{M'\cdot a_{r+1,2}'}{(1-U)\cdot M'\cdot a_{s-1,1}'-P'}.\end{equation}
%\end{remark}

\begin{lemma}
\label{lemma:horizontal-block-G2-narrow}
Suppose that $n=2$.
Let $(\QQ, V, R)$ be a triplet, and $B\subset\Ar(\QQ)$ be a basic horizontal
block such that the arrow $\arrow{(0,1)}{(1,1)}$ is not contained in $B$.
Let $r$ be the smallest number such that
$(r,1)\in\Ver(B)$, so that $B$ is the $r$-th basic horizontal
block with $r\ge 1$.

Suppose that there is a
block history $(\MM, \WW, \gamma)$
of $(\QQ,r)$ such that $V$ is a set of variables
$$V=\{a_{i,1}\mid i\in [1,k]\}\cup
\{a_{i,2}\mid i\in [1,\gamma-1]\setminus\MM\}\cup
\{a_{i,2}\mid i\in [r,k-1]\}.$$
Suppose that there are rational functions
$\bar{R}(i)$, $i\in [1,\gamma-1]$, in the variables
of $V$ such that the following conditions hold:
\begin{itemize}
\item[(i)]
the quiver $\QQ$ does not
contain vertical arrows $\alpha$ such that $h(\alpha)=(i,j)$ for
$i\in [1,r]$, $j\in [1,2]$;
\item[(ii)]
one has $R(k,2)=1$ (in what follows, we assign $a_{k,2}=R(k,2)=1$,
abusing notation a little bit);
\item[(iii)]
for $(i,j)$ with $i\in [r, k]$,
$j\in [1,2]$, $(i,j)\neq (k,2)$,
one has $R(i,j)=a_{i,j}$;
\item[(iv)] for any $i\in [1,\gamma-1]\setminus\WW$ one has
$R(i,1)=a_{i,1}\cdot\bar{R}(i)$;
\item[(v)] for any $i\in \WW$ one has
$R(i,1)=\bar{R}(i)$;
\item[(vi)] for any $i\in [1,\gamma-1]\setminus\MM$ one has
$R(i,2)=a_{i,2}\cdot\bar{R}(i)$;
\item[(vii)] for any $i\in \MM$ one has
$$R(i,2)=\frac{a_{i+1,2}}{a_{w(i),1}}
\cdot\bar{R}(i),$$
where
$w(i)=\min\{w\in\WW\mid w>i\}$;
\item[(viii)] for any $i\in [\gamma, r-1]$ one has
$$R(i,1)=a_{r,1}+a_{r-1,1}+\ldots+a_{i,1};$$
\item[(ix)] for any $i\in [\gamma, r-1]$ one has
$$R(i,2)=\frac{a_{r,2}\cdot
(a_{r,1}+a_{r-1,1})\cdot
(a_{r,1}+a_{r-1,1}+a_{r-2,1})\cdot\ldots\cdot
(a_{r,1}+a_{r-1,1}+\ldots+a_{i,1})}
{a_{r-1,1}\cdot a_{r-2,1}\cdot\ldots\cdot a_{i,1}};$$
\item[(x)] the rational function $R(k,3)$ is a Laurent
polynomial in the variables of $V$ such that
$R(k,3)$ does not depend on variables $a_{i,j}$ with $i\in [r+1, k]$,
$j\in [1,2]$, and each of its
Laurent monomials has non-negative degree in each
of the variables $a_{r,j}$, $j\in [1,2]$;
\item[(xi)] the total degree of any Laurent monomial of $R(k,3)$
with respect to the variables $a_{i,2}$, $i\in [1,\gamma-1]\setminus\MM$, is non-positive;
\item[(xii)] the $\Lambda_{\MM,\WW,\gamma,r}$-degree
of any Laurent monomial of $R(k,3)$ equals~$1$.
\end{itemize}

Then there exists a
change of variables $\psi$ that agrees with the triplet $(\QQ,V,R)$
and with the
block $B$ with the following properties.
Let $(\QQ',V',R')$ be the transformation of the triplet
$(\QQ,V,R)$ associated to $\psi$.
Then $\psi^*F_{\QQ,V,R}$ is a Laurent polynomial
in the variables of $V'$.
Moreover, $V'$ is a set of variables
$$V'=\{a_{i,1}'\mid i\in [1,k]\}\cup
\{a_{i,2}'\mid i\in [1,\gamma-1]\setminus\MM\}\cup
\{a_{i,2}'\mid i\in [r+1,k-1]\},$$
and conditions (i)--(xii) hold after replacing
$\QQ$, $V$, $R$, and $r$ by $\QQ'$, $V'$, $R'$,
and $r+1$, respectively, and keeping $\MM$, $\WW$ and $\gamma$ the same
as before.
\end{lemma}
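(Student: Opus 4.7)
The key observation is that, because $B$ consists of just two arrows, the hypersurface equation $F_{\QQ,V,R,B}=1$ reduces by conditions (ii)--(iii) to the simple two-term relation
\[
\frac{a_{r+1,1}}{a_{r,1}}+\frac{a_{r+1,2}}{a_{r,2}}=1.
\]
Unlike in Lemmas~\ref{lemma:horizontal-block-G2-start} and~\ref{lemma:horizontal-block-G2}, this can be solved in a single step without introducing weight and main variables separately. The plan is to define $\psi\colon\TT(V')\dasharrow\TT(V)$ by the single polynomial substitution
\[
a_{r,1}=a'_{r,1}+a'_{r+1,1},\qquad a_{r,2}=\frac{a'_{r+1,2}(a'_{r,1}+a'_{r+1,1})}{a'_{r,1}},
\]
with $a_{i,j}=a'_{i,j}$ for every other index. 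This is clearly birational onto $H_{\QQ,V,R,B}$, and the formula for $a_{r,2}$ is exactly the one that the statement (via condition (ix) with $r$ replaced by $r+1$) dictates for the new value $R'(r,2)$.

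Next I would verify that $\psi^*F_{\QQ,V,R}$ is a Laurent polynomial in the variables of $V'$ by decomposing the sum over arrows of $\QQ$. The two block arrows contribute $1$; the two inner arrows of $B$ pull back to the Laurent monomials $a'_{r+1,2}/a'_{r,1}$ and $a'_{r+1,2}/a'_{r+1,1}$ after the obvious cancellations. By condition~(i) the surviving vertical arrows all lie above level $r+1$ and therefore contribute Laurent monomials in untouched variables, while the horizontal arrows with $i\in[1,\gamma-1]$ contribute Laurent monomials because, by~(iv)--(vii), the $\bar R(i)$ factors cancel between numerator and denominator.

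The two delicate cases are the horizontal arrows $\arrow{(i,1)}{(i,2)}$ for $i\in[\gamma,r-1]$ and the closing arrow $\arrow{(k,2)}{(k,3)}$. For the former, substituting the explicit formulas from (viii)--(ix) yields a product of partial sums $S'_m=a'_{r+1,1}+a'_{r,1}+\cdots+a'_{m,1}$ over a monomial, with the factor $S'_i$ arising from $\psi^*R(i,1)$ cancelling one copy of itself in the numerator; the result is a polynomial divided by a monomial, hence a Laurent polynomial. For $\arrow{(k,2)}{(k,3)}$, condition~(x) guarantees that every Laurent monomial of $R(k,3)$ has non-negative degrees in both $a_{r,1}$ and $a_{r,2}$, so each such monomial pulls back to a product of $(a'_{r,1}+a'_{r+1,1})^{d}$ and $\bigl(a'_{r+1,2}(a'_{r,1}+a'_{r+1,1})/a'_{r,1}\bigr)^{d'}$ with $d,d'\ge 0$, which is again a Laurent polynomial.

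Finally I would verify that the new triplet $(\QQ',V',R')$ satisfies conditions (i)--(xii) with $r$ replaced by $r+1$ and $\MM$, $\WW$, $\gamma$ unchanged. Conditions (i)--(iii) are immediate from the definition of $\psi$ and $\QQ'=\QQ\setminus B$; (iv)--(vii) follow by setting $\bar R'(i)=\psi^*\bar R(i)$ and using that every variable appearing on their right-hand sides is untouched since $w(i)<\gamma\le r$ and $i+1\le\gamma\le r$; (viii)--(ix) at the new index range $[\gamma,r]$ are precisely the formulas already computed when verifying the Laurent polynomial property, with the previously absent case $i=r$ supplied by the defining substitution for $a_{r,2}$; and (x) follows from the substitution analysis of $R(k,3)$. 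I expect the main bookkeeping obstacle to be conditions (xi) and (xii), which require checking via Remark~\ref{remark:stupid} that the substituted images of $a_{r,1}$ and $a_{r,2}$ have the correct $\Lambda$-degrees under the new weight function; this amounts to the elementary observation that $\Lambda_{\MM,\WW,\gamma,r+1}(a'_{r,1})=\Lambda_{\MM,\WW,\gamma,r+1}(a'_{r+1,1})=1$ and $\Lambda_{\MM,\WW,\gamma,r+1}(a'_{r+1,2})=0$, together with a separate verification in the boundary case $r+1=k$, where $a_{r+1,2}=R(k,2)=1$ is formally not a variable but the substitution formulas remain consistent under the convention $a'_{r+1,2}=1$.
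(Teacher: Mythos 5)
Your proposal is correct and takes essentially the same route as the paper's proof: you use the identical substitution $a_{r,1}=a'_{r,1}+a'_{r+1,1}$, $a_{r,2}=a'_{r+1,2}\,(a'_{r,1}+a'_{r+1,1})/a'_{r,1}$ (the paper's equations~\eqref{eq:left-column-change-narrow} and~\eqref{eq:main-variable-prime-narrow}, with $a_{r,2}$ as the main variable and no weight variable), the same arrow-by-arrow decomposition to verify Laurentness, and the same degree bookkeeping via Remark~\ref{remark:stupid} for conditions~(xi)--(xii), including the convention $a_{k,2}=1$ in the boundary case.
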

\begin{proof}
The proof is similar to the proof of Lemma~\ref{lemma:horizontal-block-G2-start}.
Here we choose $a_{r,2}$ to be the main variable,
and do not need a weight variable at all.
\end{proof}

%\begin{remark}[cf. Remark~\ref{remark:horizontal-block-G2-start}]
%\label{remark:horizontal-block-G2-narrow}
%If in Lemma~\ref{lemma:horizontal-block-G2-narrow} we replace the equation
%$1-F_{\QQ,V,R,B}=0$ by
%$1-F_{\QQ,V,R,B}=U$, then equation~\ref{eq:main-variable-narrow}
%takes the form
%\begin{equation}
%\label{eq:main-variable-narrow-U}
%a_{r,2}=\frac{a_{r+1,2}\cdot a_{r,1}}{(1-U)\cdot a_{r,1}-a_{r+1,1}}.
%\end{equation}
%\end{remark}

\section{Mixed blocks}
\label{section:mixed}

In this section we deal with changes of variables that agree with
mixed blocks for Grassmannians $\G(2,k+2)$.

\begin{lemma}
\label{lemma:mixed-block-G2-start}
Suppose that $n=2$.
Let $(\QQ, V, R)$ be a triplet, and $B\subset\Ar(\QQ)$ be a mixed
block such that the arrow $\arrow{(0,1)}{(1,1)}$ is contained in $B$
and the arrow $\arrow{(k,2)}{(k,3)}$ is not contained in $B$.
Suppose that $V$ is a set of variables
$$V=\{a_{i,1}\mid i\in [1,k]\}\cup \{a_{i,2}\mid i\in [1,k-1]\}\cup\{a\},$$
and the following conditions hold:
\begin{itemize}
\item[(i)]
one has $R(k,2)=1$;
\item[(ii)]
$R(0,1)=R(k,3)=a$;
\item[(iii)]
for $i\in [1,k]$, $j\in [1,2]$, $(i,j)\neq (k,2)$ one has
$R(i,j)=a_{i,j}$, see Figure~\ref{figure:g25one}
\end{itemize}

Then there exists a
change of variables $\psi$ that agrees with the triplet $(\QQ,V,R)$
and with the
block $B$ such that for the transformation
$(\QQ'',V'',R'')$ of the triplet
$(\QQ,V,R)$ associated to $\psi$
the rational function $\psi^*F_{\QQ,V,R}$ is a Laurent polynomial
in the variables of $V''$.
\end{lemma}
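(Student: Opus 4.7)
The plan is to follow the template of Lemma~\ref{lemma:horizontal-block-G2-start}: perform a weighted monomial change of variables, then use the hypersurface equation to eliminate a main variable, and finally introduce a modified weight variable so that the superpotential becomes a Laurent polynomial. The first observation is that for $n=2$ the constraints on $B$ are very rigid: containing $\arrow{(0,1)}{(1,1)}$ forces $B$ to include both the $0$-th and the $(k-1)$-th horizontal basic blocks, hence all of them, so $d_1=k$; omitting $\arrow{(k,2)}{(k,3)}$ forces $d_2=1$. Hence $B=\Ar(\QQ)\setminus\{\arrow{(k,2)}{(k,3)}\}$, the inner-arrow set $\In(B)$ is empty, and the unique arrow of $\QQ$ outside $B$ contributes the term $a$ to $F_{\QQ,V,R}$. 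In particular, for any $\psi$ agreeing with the triplet and with $B$ one has $\psi^*F_{\QQ,V,R}=1+\psi^*a$, so the task reduces to producing a $\psi$ for which $\psi^*a$ is a Laurent polynomial in the variables of $V''$.

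Rewriting the hypersurface equation $F_{\QQ,V,R,B}=1$ as
\[
\frac{a_{1,1}}{a} + \sum_{s=1}^{k-1}\sum_{j=1}^{2}\frac{a_{s+1,j}}{a_{s,j}} + \sum_{i=1}^{k}\frac{a_{i,2}}{a_{i,1}} = 1,
\]
with the convention $a_{k,2}=1$, I take the weight variable to be $w=a_{k,1}$ and assign weights $\wt(0,1)=k+1$, $\wt(i,1)=k+1-i$ for $i\in[1,k-1]$, $\wt(k,1)=1$, $\wt(i,2)=k-i$ for $i\in[1,k-1]$, and $\wt(k,2)=0$. A case-by-case check---distinguishing the $0$-th arrow, the internal horizontal arrows in each column, the horizontal arrows adjacent to the frozen vertex $(k,2)$ or to the renamed weight vertex $(k,1)$, and the vertical arrows (including $i=k$)---shows that each of the $3k-1$ arrows of $B$ acquires weight $-1$ under the monomial substitution $a_{i,j}=a'_{i,j}\cdot w^{\wt(i,j)}$ and $a=a'\cdot w^{k+1}$, where $a_{k,1}=a'_{k,1}=w$ is unscaled. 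After factoring out $w^{-1}$, the equation becomes $\Xi=a'_{k,1}$, where $\Xi$ is a Laurent polynomial in the $a'_{i,j}$ that involves $a'$ only through the single term $a'_{1,1}/a'$.

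I then mirror the substitution of Lemma~\ref{lemma:horizontal-block-G2-start}. Writing $\Xi'=\Xi-a'_{1,1}/a'=P'/M'$ with $M'=\prod_{i\in[1,k-1],\,j\in[1,2]}a'_{i,j}$, the equation rearranges to $a'=a'_{1,1}/(a'_{k,1}-\Xi')$, and I introduce the new variable $a''_{k,1}:=(M'\cdot a'_{k,1}-P')/M'$ while renaming the remaining $a'_{i,j}$ to $a''_{i,j}$ and setting
\[
V''=\{a''_{i,1}\mid i\in[1,k]\}\cup\{a''_{i,2}\mid i\in[1,k-1]\}.
\]
Back-substituting yields
\[
\psi^*a = \frac{a''_{1,1}}{a''_{k,1}}\left(a''_{k,1}+\frac{P''}{M''}\right)^{k+1},
\]
which, after binomial expansion, is a Laurent polynomial in the variables of $V''$, as required. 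The only step that is at all delicate is the weight-uniformity check in the previous paragraph, where the boundary cases around the frozen vertex $(k,2)$ and around the renamed weight vertex $(k,1)$ have to be inspected separately; once this uniformity is established, everything else is a formal manipulation, and $\psi$ is automatically birational onto $H_{\QQ,V,R,B}$ since the formulas above can be inverted to recover the $a''_{i,j}$ from $(a_{i,j},a)$.
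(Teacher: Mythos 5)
Your proof is correct and follows essentially the same route as the paper's: the same observation that $\psi^*F_{\QQ,V,R}=1+\psi^*a$ because the only arrow outside $B$ is $\arrow{(k,2)}{(k,3)}$, the same weights $\wt(i,j)=k+2-i-j$, and the same two-step elimination (weighted monomial change, solving for the main variable $a$, then absorbing $P'/M'$ into a shifted weight variable), ending with $\psi^*a=\frac{a''_{1,1}}{a''_{k,1}}\bigl(a''_{k,1}+\frac{P''}{M''}\bigr)^{k+1}$. The only deviation is your choice of $a_{k,1}$ instead of the paper's $a_{k-1,2}$ as the weight variable; both weight-one choices work equally well (your check that $P'$, $M'$ are independent of $a'$ and $a'_{k,1}$ is the essential point, and your choice even avoids the paper's separate $\Delta'$ treatment of the case $k=2$).
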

\begin{proof}
The proof is similar to the proof of Lemma~\ref{lemma:horizontal-block-G2-start}.
Here we choose $a$ to be the main variable
and $a_{k-1,2}$ to be the weight variable.
\end{proof}

%\begin{remark}[cf. Remark~\ref{remark:horizontal-block-G2-start}]
%\label{remark:mixed-block-G2-start}
%If in Lemma~\ref{lemma:mixed-block-G2-start} we replace the equation
%$1-F_{\QQ,V,R,B}=0$ by
%$1-F_{\QQ,V,R,B}=U$, then equation~\ref{eq:main-variable-2-mixed-start}
%takes the form
%\begin{equation}
%\label{eq:main-variable-2-mixed-start-U}
%a'=\frac{M'\cdot a_{1,1}'}{(1-U)\cdot M'\cdot a_{k-1,2}'-P'}\end{equation}
%\end{remark}

\begin{lemma}
\label{lemma:mixed-block-G2}
Suppose that $n=2$.
Let $(\QQ, V, R)$ be a triplet, and $B\subset\Ar(\QQ)$ be a mixed
block such that the arrow $\arrow{(0,1)}{(1,1)}$ is not contained in $B$.
Let $r$ be the smallest number such that there
is a vertical arrow $\arrow{(r,1)}{(r+1,1)}\in B$.

Suppose that there is a
block history
$(\MM, \WW, r)$, i.\,e. one with $\gamma=r$
in the notation of Definition~\ref{definition:history},
of $(\QQ,r)$ such that $V$ is a set of variables
$$V=\{a_{i,1}\mid i\in [1,k]\}\cup \{a_{i,2}\mid i\in [1,k-1]\setminus\MM\}.$$
Suppose that there are rational functions
$\bar{R}(i)$, $i\in [1,r-1]$, in the variables of~$V$
such that the following conditions hold:
\begin{itemize}
\item[(i)]
the quiver $\QQ$ does not
contain vertical arrows $\alpha$ such that $h(\alpha)=(i,j)$ for
$i\in [1,r]$, $j\in [1,2]$;
\item[(ii)]
one has $R(k,2)=1$;
\item[(iii)]
for $(i,j)$ with $i\in [r, k]$,
$j\in [1,2]$, $(i,j)\neq (k,2)$,
one has $R(i,j)=a_{i,j}$;
\item[(iv)] for any $i\in [1,r-1]\setminus\WW$ one has
$R(i,1)=a_{i,1}\cdot\bar{R}(i)$;
\item[(v)] for any $i\in \WW$ one has
$R(i,1)=\bar{R}(i)$;
\item[(vi)] for any $i\in [1,r-1]\setminus\MM$ one has
$R(i,2)=a_{i,2}\cdot\bar{R}(i)$;
\item[(vii)] for any $i\in \MM$ one has
$$R(i,2)=\frac{a_{i+1,2}}{a_{w(i),1}}
\cdot\bar{R}(i),$$
where
$w(i)=\min\{w\in\WW\mid w>i\}$;
\item[(viii)] the rational function $R(k,3)$ is a Laurent
polynomial in the variables of $V$ such that
$R(k,3)$ does not depend on variables $a_{i,j}$ with $i\in [r+1, k]$,
$j\in [1,2]$, and each of its Laurent
monomials has non-negative degree in each
of the variables $a_{r,j}$, $j\in [1,2]$;
\item[(ix)] the total degree of any Laurent monomial of $R(k,3)$
with respect to variables $a_{i,2}$, $i\in [1,r]\setminus\MM$, is non-positive.
\end{itemize}

Then there exists a
change of variables $\psi$
that agrees with the triplet $(\QQ,V,R)$ and with the
block $B$ such that for the transformation
$(\QQ'',V'',R'')$ of the triplet
$(\QQ,V,R)$ associated to $\psi$
the rational function $\psi^*F_{\QQ,V,R}$ is a Laurent polynomial
in the variables of $V''$.
\end{lemma}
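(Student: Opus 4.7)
The plan is to follow the strategy of Lemma~\ref{lemma:mixed-block-G2-start} and Lemma~\ref{lemma:horizontal-block-G2}: write down the defining equation $F_{\QQ,V,R,B}=1$, perform a monomial rescaling governed by a \emph{weight variable}, solve for a \emph{main variable}, and verify that the pullback $\psi^*F_{\QQ,V,R}$ remains a Laurent polynomial after substitution. First I would write out $F_{\QQ,V,R,B}=1$ explicitly. Since $B$ contains the arrows $\arrow{(s,j)}{(s+1,j)}$ for $s\in[r,k-1]$, $j\in[1,2]$, together with the arrows $\arrow{(i,1)}{(i,2)}$ for $i\in[1,k]$ (and possibly the terminal arrow $\arrow{(k,2)}{(k,3)}$), condition~(iii) on rows $[r,k]$ and conditions~(iv)--(vii) on the arrows $\arrow{(i,1)}{(i,2)}$ with $i\in[1,r-1]$ cause the factors $\bar R(i)$ to cancel from the sum, reducing the equation to a polynomial identity in the variables $a_{i,j}$.

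Next I would choose $a_{k-1,2}$ as the weight variable, as in Lemma~\ref{lemma:mixed-block-G2-start}, together with a suitable main variable to be eliminated (by analogy with Lemma~\ref{lemma:horizontal-block-G2} a natural candidate is $a_{r,2}$, whose coefficient is most easily isolated in the rewritten equation). I would then perform the rescaling $a_{i,j}=a_{i,j}'\cdot(a_{k-1,2}')^{\wt(i,j)}$ with weights chosen so that $\wt(t(\alpha))=\wt(h(\alpha))+1$ for every $\alpha\in B$, so that every non-trivial Laurent monomial in the rewritten equation has uniform weight $-1$ in $a_{k-1,2}'$; the choice $\wt(i,j)=k+2-i-j$ in the support of $B$ and $\wt(i,j)=0$ elsewhere achieves this, generalizing the assignment used in Lemma~\ref{lemma:mixed-block-G2-start}. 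Clearing denominators then yields an expression of the shape
\begin{equation*}
a_{\mathrm{main}}'=\frac{M'\cdot Q'}{M'\cdot a_{k-1,2}'-P'},
\end{equation*}
with $M'$ a Laurent monomial and $P'$, $Q'$ polynomials that do not involve $a_{k-1,2}'$ or $a_{\mathrm{main}}'$. Setting $a_{k-1,2}''=(M'\cdot a_{k-1,2}'-P')/M'$ and keeping the other variables $a_{i,j}''=a_{i,j}'$ defines $\psi$, and expresses $a_{k-1,2}$ and the main variable as Laurent polynomials in the variables of $V''$, using the fact that $P'/M'$ is already a Laurent polynomial by construction (its terms arise from the original equation divided by the monomial $M'$).

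Finally, to verify that $\psi^*F_{\QQ,V,R}$ is a Laurent polynomial I would partition $\Ar(\QQ)$ as in the proof of Lemma~\ref{lemma:horizontal-block-G2}: the $B$-sum pulls back to $1$ by the defining equation; arrows in $\In(B)$ and arrows disjoint from the affected variables pull back to obvious Laurent monomials or polynomials; the arrows $\arrow{(i,1)}{(i,2)}\in\Ar(\QQ)\setminus B$ with $i\in[1,r-1]$ contribute Laurent polynomials by conditions~(iv)--(vii); and the terminal contribution $\psi^*R(k,3)$ (when $\arrow{(k,2)}{(k,3)}$ is not in $B$) is a Laurent polynomial by condition~(viii), since $R(k,3)$ has non-negative degrees in the affected variables and those variables have become Laurent polynomials in $V''$. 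The main obstacle is choosing the weights consistently across both the row-increasing and column-increasing arrows of the mixed block simultaneously, and tracking the interaction between the $\bar R(i)$ cancellations in rows $[1,r-1]$ and the newly introduced variable $a_{k-1,2}''$ so that the Laurent property is preserved throughout; the absence of a ``moreover'' clause in the statement---reflecting that a mixed block is typically the final non-vertical block in the sequence of transformations---makes the bookkeeping noticeably lighter than in Lemma~\ref{lemma:horizontal-block-G2}.
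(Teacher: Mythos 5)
Your overall template (rescale by a weight variable, solve the block equation for a main variable, absorb the denominator into a new coordinate, then check the surviving contributions) is indeed the paper's method, but the specific choices you make break down at the points where the mixed case differs from the horizontal one. First, $a_{r,2}$ cannot serve as the main variable: the mixed block contains the whole first basic vertical block, so $\arrow{(r,1)}{(r,2)}\in B$ as well as $\arrow{(r,2)}{(r+1,2)}\in B$, and $a_{r,2}$ therefore appears in the numerator of one block term $\bigl(a_{r,2}/a_{r,1}\bigr)$ and in the denominator of another $\bigl(a_{r+1,2}/a_{r,2}\bigr)$; solving $F_{\QQ,V,R,B}=1$ for it is a quadratic, not a birational elimination of the required shape. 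The paper instead groups the two terms with tail $(r,1)$ as $\frac{a_{r+1,1}+a_{r,2}}{a_{r,1}}$ and eliminates $a_{r,1}$, with $a_{k,1}$ as weight variable (choosing $a_{k-1,2}$ instead would be acceptable, as the paper remarks, but not with your weights). Second, your weight prescription ``$\wt(i,j)=k+2-i-j$ on the support of $B$, zero elsewhere'' does not make the rewritten equation homogeneous of weight $-1$: since the first basic vertical block lies in $B$, its arrows with $i\in[1,r-1]$ contribute terms $a_{i,2}/a_{i,1}$, $a_{i,2}$ (for $i\in\WW$) and $a_{i+1,2}/(a_{w(i),1}\,a_{i,1})$ (for $i\in\MM$) after the $\bar R(i)$ cancel, and with your weights these terms have weights different from $-1$. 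Then the equation is no longer of the form $1-\frac{1}{a_{\mathrm{wt}}'}\cdot\frac{P'}{M'}$ and the subsequent substitution ceases to be of Laurent type. The paper fixes this by setting $\wt(i,1)=0$ and $\wt(i,2)=-1$ on rows $i\in[1,r-1]$ and using $k+2-i-j$ only on rows $[r,k]$.

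The more serious gap is in the final verification. In the mixed case $\Ar(\QQ)=B\sqcup\{\arrow{(k,2)}{(k,3)}\}$ (the arrows $\arrow{(i,1)}{(i,2)}$ with $i\le r-1$ are inside $B$, not in $\Ar(\QQ)\setminus B$, and there are no inner arrows), so the whole issue reduces to showing that $\psi^*R(k,3)$ is Laurent. Your argument for this — ``$R(k,3)$ has non-negative degrees in the affected variables and those variables have become Laurent polynomials'' — is not correct: under the (correct) change of variables the low-row variables pull back as
\begin{equation*}
a_{i,2}=a_{i,2}''\cdot\left(\frac{M''\cdot a_{k,1}''+P''}{M''}\right)^{-1},\qquad i\in[1,r-1]\setminus\MM,
\end{equation*}
which is not a Laurent polynomial. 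Laurentness of $\psi^*R(k,3)$ requires combining condition (viii) with condition (ix): the non-positive total degree of each monomial of $R(k,3)$ in the variables $a_{i,2}$, $i\in[1,r-1]\setminus\MM$, together with the non-negative degrees in $a_{r,1},a_{r,2}$, guarantees that every monomial has non-negative total weight, so the negative powers of the factor $\bigl(M''a_{k,1}''+P''\bigr)/M''$ are compensated. You never invoke (ix), and without it the conclusion fails.
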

\begin{proof}
The proof is similar to the proof of Lemma~\ref{lemma:horizontal-block-G2-start}.
Here we choose $a_{r,1}$ to be the main variable
and $a_{k,1}$ to be the weight variable.
\end{proof}

%\begin{remark}[cf. Remark~\ref{remark:horizontal-block-G2-start}]
%\label{remark:mixed-block-G2}
%If in Lemma~\ref{lemma:mixed-block-G2} we replace the equation
%$1-F_{\QQ,V,R,B}=0$ by
%$1-F_{\QQ,V,R,B}=U$, then equation~\ref{eq:main-variable-2-mixed}
%takes the form
%\begin{equation}
%\label{eq:main-variable-2-mixed-U}
%a_{r,1}'=\frac{M'\cdot (\delta'+a_{r,2}')}{(1-U)\cdot M'\cdot a_{k,1}'-P'}.
%\end{equation}
%\end{remark}

\section{Vertical blocks}
\label{section:vertical}

In this section we deal with changes of variables that agree with
vertical blocks for Grassmannians $\G(2,k+2)$ and make some concluding remarks
on the changes of variables that agree with various kinds
of blocks.

\begin{lemma}
\label{lemma:vertical-block-G2}
Suppose that $n=2$.
Let $(\QQ, V, R)$ be a triplet, and $B\subset\Ar(\QQ)$ be a vertical
block such that the arrow $\arrow{(k,2)}{(k,3)}$ is
not contained in $B$ (i.\,e. $B$ is the first basic vertical block).

Suppose that there is a
block history $(\MM, \WW, \gamma)$
of $(\QQ,k)$ such that $V$ is a set of variables
$$V=\{a_{i,1}\mid i\in [1,k]\}\cup
\{a_{i,2}\mid i\in [1,\gamma-1]\setminus\MM\}.$$
Suppose that there are rational functions
$\bar{R}(i)$, $i\in [1,\gamma-1]$, in the variables
of~$V$ such that the following conditions hold:
\begin{itemize}
\item[(i)]
the quiver $\QQ$ does not
contain vertical arrows;
\item[(ii)]
one has $R(k,2)=1$;
\item[(iii)]
one has $R(k,1)=a_{k,1}$;
\item[(iv)] for any $i\in [1,\gamma-1]\setminus\WW$ one has
$R(i,1)=a_{i,1}\cdot\bar{R}(i)$;
\item[(v)] for any $i\in \WW$ one has
$R(i,1)=\bar{R}(i)$;
\item[(vi)] for any $i\in [1,\gamma-1]\setminus\MM$ one has
$R(i,2)=a_{i,2}\cdot\bar{R}(i)$;
\item[(vii)] for any $i\in \MM$ one has
$$R(i,2)=\frac{a_{i+1,2}}{a_{w(i),1}}
\cdot\bar{R}(i),$$
where
$w(i)=\min\{w\in\WW\mid w>i\}$;
\item[(viii)] for any $i\in [\gamma, k-1]$ one has
$$R(i,1)=a_{k,1}+a_{k-1,1}+\ldots+a_{i,1};$$
\item[(ix)] for any $i\in [\gamma, k-1]$ one has
$$R(i,2)=\frac{
(a_{k,1}+a_{k-1,1})\cdot
(a_{k,1}+a_{k-1,1}+a_{k-2,1})\cdot\ldots\cdot
(a_{k,1}+a_{k-1,1}+\ldots+a_{i,1})}
{a_{k-1,1}\cdot a_{k-2,1}\cdot\ldots\cdot a_{i,1}};$$
\item[(x)] the rational function $R(k,3)$ is a Laurent
polynomial in the variables of $V$ such that
the $\Lambda_{\MM,\WW,\gamma,k}$-degree of any Laurent monomial of $R(k,3)$
is non-negative.
\end{itemize}

Then there exists a
change of variables $\psi$ that agrees with the triplet $(\QQ,V,R)$
and with the
block $B$ such that for the transformation
$(\QQ'',V'',R'')$ of the triplet
$(\QQ,V,R)$ associated to $\psi$
the rational function $\psi^*F_{\QQ,V,R}$ is a Laurent polynomial
in the variables of $V''$.
\end{lemma}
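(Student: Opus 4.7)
The plan is to follow the broad strategy of Lemma~\ref{lemma:mixed-block-G2}, but exploiting a key observation specific to this vertical-block setup that substantially simplifies matters. By condition~(i) the set $\Ar(\QQ)$ consists only of the arrows of $B$ together with $\alpha_f = \arrow{(k,2)}{(k,3)}$, so
$$F_{\QQ,V,R} = F_{\QQ,V,R,B} + R(k,3),$$
and on $H_{\QQ,V,R,B}$ this equals $1 + R(k,3)$. It therefore suffices to produce a change of variables $\psi\colon\TT(V'')\dasharrow H_{\QQ,V,R,B}$ such that $\psi^*R(k,3)$ is a Laurent polynomial in the variables of $V''$.

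I would expand $F_{\QQ,V,R,B}$ using conditions~(iii)--(ix), splitting across the five cases $i\in[1,\gamma-1]\setminus(\MM\cup\WW)$, $i\in\WW$, $i\in\MM$, $i\in[\gamma,k-1]$, and $i=k$. The $\bar R(i)$-factors cancel inside each ratio $R(i,2)/R(i,1)$, and a short case analysis shows that every resulting Laurent monomial has $\Lambda_{\MM,\WW,\gamma,k}$-degree exactly $-1$. The essential input for Case~$i\in[\gamma,k-1]$ is that $\Lambda(a_{j,1})=1$ for all $j\in[\gamma,k]$, so each partial sum $S_j = a_{k,1}+a_{k-1,1}+\ldots+a_{j,1}$ is $\Lambda$-homogeneous of degree~$1$, and the $k-i-1$ factors in the numerator versus $k-i$ factors in the denominator produce net degree $-1$; the remaining cases are immediate up to a sub-case split in Case~$i\in\MM$ according to whether $i+1\in\WW$ or not.

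With this uniform $\Lambda$-degree in hand, I would perform the monomial change of variables
$$a_{i,j} = a'_{i,j}\cdot a_{k,1}^{\Lambda_{\MM,\WW,\gamma,k}(a_{i,j})}\quad\text{for $(i,j)\neq (k,1)$,}$$
keeping $a_{k,1}$ unchanged. Each $S_j$ then factors as $a_{k,1}\cdot S'_j$ with $S'_j = 1+\sum_{l=j}^{k-1}a'_{l,1}$, and every Laurent monomial of $F_{\QQ,V,R,B}$ acquires a common factor $a_{k,1}^{-1}$, so the hypersurface equation reduces to
$$a_{k,1} = F'(a'_{i,j}),$$
where $F'$ is an explicit Laurent polynomial in the primed variables that does not involve $a_{k,1}$. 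Setting $a''_{i,j}=a'_{i,j}$ and $V''=V\setminus\{a_{k,1}\}$, the resulting map $\psi$ is given by
$$a_{k,1}\mapsto F'(a''),\qquad a_{i,j}\mapsto a''_{i,j}\cdot F'(a'')^{\Lambda(a_{i,j})}\ \text{for $(i,j)\neq(k,1)$,}$$
and is easily checked to be birational onto $H_{\QQ,V,R,B}$.

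Finally, any Laurent monomial $\mu = \prod a_{i,j}^{n_{i,j}}$ of $R(k,3)$ pulls back to
$$\psi^*\mu = \Bigl(\prod_{(i,j)\neq(k,1)}(a''_{i,j})^{n_{i,j}}\Bigr)\cdot F'(a'')^{\deg_{\Lambda_{\MM,\WW,\gamma,k}}(\mu)},$$
which is a Laurent polynomial in the variables of $V''$ by condition~(x), since the exponent of $F'(a'')$ is non-negative. Summing over monomials gives the required Laurentness of $\psi^*R(k,3)$ and hence of $\psi^*F_{\QQ,V,R}=1+\psi^*R(k,3)$. The main obstacle is the uniform $\Lambda$-degree computation — in particular the sub-case analysis when $i\in\MM$ and the telescoping calculation for $i\in[\gamma,k-1]$; once those are in hand, the rest of the argument is formal, and notably one avoids the secondary substitution of the type used in the proofs of Lemmas~\ref{lemma:horizontal-block-G2}--\ref{lemma:mixed-block-G2}.
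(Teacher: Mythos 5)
Your proof is correct, and it takes a genuinely different (and in fact leaner) route than the paper's. Both arguments rest on the same two pillars: that after the $\Lambda_{\MM,\WW,\gamma,k}$-grading every Laurent monomial of $F_{\QQ,V,R,B}$ has degree exactly $-1$ (your case analysis, including the sub-cases $i+1\in\WW$ or not for $i\in\MM$ and the telescoping count for $i\in[\gamma,k-1]$, checks out), and that condition~(x) gives non-negative degree for the monomials of $R(k,3)$. The difference is in the elimination mechanics: the paper follows its general two-variable scheme, choosing a weight variable $a_{u,1}$ (with the case split $u=k$ if $\gamma<k$ and $u=\min\WW$ if $\gamma=k$) and a separate main variable $a_{\gamma,1}$, and then needs the secondary substitution~\eqref{eq:weight-variable-first-step-vertical} redefining the weight variable before expressing the old variables via~\eqref{eq:weight-variable-change-vertical}--\eqref{eq:ai2-vertical}; you instead rescale by powers of $a_{k,1}$ with exponents $\Lambda$, observe that the block equation becomes homogeneous of $a_{k,1}$-degree $-1$, and so eliminate the weight variable itself, $a_{k,1}=F'(a'')$, in a single step. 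This buys a cleaner pullback formula $\psi^*\mu=(\text{monomial})\cdot F'^{\deg_{\Lambda}\mu}$ and a uniform treatment of the case $\gamma=k$, where the paper must switch to $u=\min\WW$. What the paper's longer route buys is the explicit $(P',M')$-form of the substitution, which is exactly what gets deformed by $U$ in Remark~\ref{remark:vertical-block-G2} and reused verbatim in the residue computation of Proposition~\ref{proposition: periods}; with your chart that later bookkeeping would have to be redone (though an analogous residue argument clearly goes through). Two small points you use implicitly and should record: $\Ar(\QQ)=B\sqcup\{\arrow{(k,2)}{(k,3)}\}$ (this follows from condition~(i) and admissibility, and is also assumed in the paper), and for $i\in\MM$ the symbol $a_{i+1,2}$ is indeed a variable of $V$ because $i+1\le\gamma-1$ and $i+1\notin\MM$.
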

\begin{proof}
The proof is similar to the proof of Lemma~\ref{lemma:horizontal-block-G2-start}.
Here we define the main and the weight variables as follows.
If $\gamma<k$, we put $u=k$. If $\gamma=k$,
then $\WW\neq\varnothing$, and we put
$u=\min\WW$ (so that $u\neq\gamma$ by definition of block history).
In both cases we choose $a_{u,1}$ to be the weight variable
and~$a_{\gamma,1}$ to be the main variable.
The weights (cf.~\eqref{eq:s-i-horizontal-start}) are defined as
\begin{equation}\label{eq:wt-vs-Lambda}
\wt(i,j)=\Lambda_{\MM,\WW,\gamma,k}(i,j)
\end{equation}
for
$$
(i,j)\in\{(i,1)\mid i\in [1,k]\}\cup\{(i,2)\mid i\in [1,\gamma-1]\setminus\MM\}.
$$
\end{proof}

%\begin{remark}[cf. Remark~\ref{remark:horizontal-block-G2-start}]
%\label{remark:vertical-block-G2}
%If in Lemma~\ref{lemma:vertical-block-G2} we replace the equation
%$1-F_{\QQ,V,R,B}=0$ by
%$1-F_{\QQ,V,R,B}=U$, then equation~\ref{eq:main-variable-2-vertical}
%takes the form
%\begin{equation}
%\label{eq:main-variable-2-vertical-U}
%a_{\gamma,1}'=\frac{M'}{(1-U)\cdot M'\cdot a_{u,1}'-P'}\cdot
%\frac{
%(1+a_{k-1,1}')\cdot\ldots\cdot
%(1+a_{k-1,1}'+\ldots+a_{\gamma+1,1}')}
%{a_{k-1,1}'\cdot\ldots\cdot a_{\gamma+1,1}'}.\end{equation}
%\end{remark}

We conclude this section by a couple of general remarks concerning the proofs
of Lemmas~\ref{lemma:horizontal-block-G2-start},
\ref{lemma:horizontal-block-G2},
\ref{lemma:horizontal-block-G2-narrow},
\ref{lemma:mixed-block-G2-start},
\ref{lemma:mixed-block-G2}
and~\ref{lemma:vertical-block-G2}.

\begin{remark}\label{remark:lemmas}
Let $n=2$, let $(\QQ,V,R)$ be a triplet, and
let $B\subset\Ar(\QQ)$ be a block such that a quiver with the set
of vertices coinciding with $\Ver(\QQ)$ and the set
of arrows $\Ar(\QQ)\setminus B$ is admissible.

Suppose that $(\QQ,V,R)$ is a result of an application
of Lemma~\ref{lemma:horizontal-block-G2-start}
to a triplet
$(\widehat{\QQ},\widehat{V},\widehat{R})$
and some horizontal block $\widehat{B}\subset\Ar(\widehat{\QQ})$
with the properties described in the assumptions of
Lemma~\ref{lemma:horizontal-block-G2-start}.
Then $(\QQ,V,R)$ and $B$
satisfy the assumptions of Lemmas~\ref{lemma:horizontal-block-G2},
\ref{lemma:horizontal-block-G2-narrow},
\ref{lemma:mixed-block-G2}
or~\ref{lemma:vertical-block-G2}
depending on whether $B$ is a horizontal block of size at least~$2$,
a basic horizontal block, a mixed block or a vertical block.

Similarly,
suppose that $(\QQ,V,R)$ is a result of an application
of Lemma~\ref{lemma:horizontal-block-G2} to some triplet
$(\widehat{\QQ},\widehat{V},\widehat{R})$ and some horizontal block
$\widehat{B}\subset\Ar(\widehat{\QQ})$.
Then $(\QQ,V,R)$ and $B$
satisfy the assumptions of
Lemmas~\ref{lemma:horizontal-block-G2},
\ref{lemma:horizontal-block-G2-narrow},
\ref{lemma:mixed-block-G2}
or~\ref{lemma:vertical-block-G2}
depending on whether $B$ is a horizontal block of size at least~$2$,
a basic horizontal block, a mixed block or a vertical block.

Finally, suppose that $(\QQ,V,R)$ is a result of an application
of Lemma~\ref{lemma:horizontal-block-G2-narrow}.
Then~\mbox{$(\QQ,V,R)$} and $B$
satisfy the assumptions of
Lemmas~\ref{lemma:horizontal-block-G2-narrow},
\ref{lemma:mixed-block-G2}
or~\ref{lemma:vertical-block-G2}
depending on whether~$B$ is
a basic horizontal block, a mixed block or a vertical block.
Note that we are not able to apply Lemma~\ref{lemma:horizontal-block-G2}
to a result of an application of Lemma~\ref{lemma:horizontal-block-G2-narrow}.

We will use these observations during
the inductive proof of Theorem~\ref{theorem: main} below.
\end{remark}

\begin{remark}
A reader may have an impression that our choice
of main variables and weight variables in the proofs
of Lemmas~\ref{lemma:horizontal-block-G2-start},
\ref{lemma:horizontal-block-G2},
\ref{lemma:mixed-block-G2-start},
\ref{lemma:mixed-block-G2}
and~\ref{lemma:vertical-block-G2} is rather arbitrary. This is true to some
extent, and some choices could be maid in some other way.
Nevertheless, at least part of our choices is inevitable, and
some of the others are done due to our attempts to optimize
the computations. First,
when we choose a main variable for some block we want that
the corresponding vertex is not simultaneously
a tail of some arrow of the block and
a head of some other arrow of the block.
Thus one of the very few unnecessary things here is the choice of the variable
$a$ instead of $a_{1,2}$ as a main variable in the proof of
Lemma~\ref{lemma:horizontal-block-G2-start}.
We did this because we wanted to unify the case when the size
of the block equals~$1$ and the case when the size
of the block exceeds~$1$, and also, more
importantly, to obtain a bit more uniform set of variables
after this first change.

Furthermore, the weight of a weight variable
with respect to itself is~$1$,
and our method of expressing a main variable requires that
for any arrow $\alpha$ of the block one has $\wt(t(\alpha))=\wt(h(\alpha))+1$.
Therefore, when working with a horizontal block
$B$ in the proof of Lemma~\ref{lemma:horizontal-block-G2}
we choose a weight variable in the second line from below in~\mbox{$\Ver(B)$};
this allows us to leave the variables corresponding to the last
row of $\Ver(B)$ unaffected
by the change of coordinates, so that the further
changes of coordinates remain relatively simple,
and so that we do not have a contradiction
with assigning the weight to $a_{k,2}=1$ if~\mbox{$(k,2)\in\Ver(B)$}.
Also, in this case we choose a weight variable in the first column rather
than in the second column of $\Ver(B)$ to avoid
dealing with more cases that would arise if
the block $B$ could contain an arrow between the vertices
corresponding to a main variable and a weight variable.

Similarly to this,
when we choose a weight variable in the proof of
Lemma~\ref{lemma:mixed-block-G2-start}, we choose it in the second column
to avoid dealing with more cases that would arise if
there could be an arrow in the block $B$ between the vertices
corresponding to the main variable $a$ and a weight variable.
Besides this, we are forced to choose the weight variable in the $(k-1)$-th
row since the weight of $a_{k,2}=1$ with respect to anything is $0$.

In the proof of Lemma~\ref{lemma:mixed-block-G2}
our main variable corresponds, as explained above,
to the unique vertex $(i,j)\in\Ver(B)$ that is not simultaneously
a tail of some arrow of the block and
a head of some other arrow of $B$, and such that the corresponding
rational function~\mbox{$R(i,j)$} is a variable.
On the other hand, the choice of the weight variable
is dictated by the requirement that the distance between the
corresponding vertex and the vertex~\mbox{$(k,2)\in\Ver(B)$}
along the arrows
of $B$ should equal $1$. This leaves us with
a choice between the variables $a_{k,1}$ and $a_{k-1,2}$,
with no big difference between these cases.

Finally, in the proof of Lemma~\ref{lemma:vertical-block-G2}
we choose $a_{\gamma,1}$ to be the
main variable since it is the only one that we actually managed to express
via the remaining variables in the most general case.
The choice of the weight variable $a_{u,1}$ is mostly defined
by the function~\mbox{$\Lambda_{\MM,\WW,\gamma,k}$}.
In principle, $u$ can be replaced
by any number from the set $\WW$, or by any number
from the set $[\gamma+1,k]$.
\end{remark}

\begin{remark}
Our proofs of Lemmas~\ref{lemma:mixed-block-G2} and~\ref{lemma:vertical-block-G2}
rely on different degree conditions
(cf.~condition~(ix) of Lemma~\ref{lemma:mixed-block-G2}
and condition~(x) of Lemma~\ref{lemma:vertical-block-G2}).
We did not manage to unify them, but we suspect that it may be possible
if one uses some other degree function.
\end{remark}

\section{Proof of the main theorem}
\label{section:proof}

In this section we prove Theorem~\ref{theorem: main}
using preliminary computations
performed in Sections~\ref{section:horizontal},
\ref{section:mixed} and~\ref{section:vertical}.

\begin{proof}[{Proof of Theorem~\ref{theorem: main}}]
Define an auxiliary
triplet $(\widetilde{\QQ}_0,\widetilde{V}_0,\widetilde{R}_0)$
as follows. Put $\widetilde{\QQ}_0=\QQ_0$ and
$$
\widetilde{V}_0=
\{\widetilde{a}_{i,1}\mid i\in [1,k]\}\cup
\{\widetilde{a}_{i,2}\mid i\in [1,k-1]\}\cup
\{a\}.
$$
Define
$$\widetilde{R}_0(k,2)=1,
\quad \widetilde{R}_0(0,1)=\widetilde{R}_0(k,3)=a,$$
and $\widetilde{R}_0(i,j)=\widetilde{a}_{i,j}$ for
$i\in [1,k]$, $j\in [1,2]$, $(i,j)\neq (k,2)$.
Let
$$\widetilde{\psi}_0\colon \TT(\widetilde{V}_0)\to \TT(V_0)$$
be a monomial change of variables given by
$$
\widetilde{a}_{i,j}=\frac{a_{i,j}}{a_{k,n}},\quad a=\frac{1}{a_{k,n}}.
$$
It is easy to check that
$$
\widetilde{\psi}_0^*(F_{\QQ_0,V_0,R_0})=
F_{\widetilde{\QQ}_0,\widetilde{V}_0,\widetilde{R}_0}.
$$

We choose the blocks $B_1,\ldots,B_l$ in the following way.

If $\sum d_i\le k$, then we consecutively choose
$B_1,\ldots,B_l$ to be horizontal blocks of
size~\mbox{$d_1,\ldots, d_l$} situated as high as possible.

If $\sum d_i=k+1$ and $d_l\ge 2$, then we consecutively choose
$B_1,\ldots,B_{l-1}$ to be horizontal blocks of size $d_1,\ldots, d_{l-1}$
situated as high as possible. After this
we choose $B_l$ to be a mixed block of size $d_l$, so that $B_l$
covers all the remaining vertical arrows of $\Ar(\QQ_0)$, and all
horizontal arrows of $\Ar(\QQ_0)$ except for
the arrow~\mbox{$\arrow{(k,2)}{(k,3)}$}.
In particular, if~\mbox{$l=1$} and $d_1=k+1$, then we choose $B_1$ to be the mixed block
that consists of all arrows of~\mbox{$\Ar(\QQ_0)$} except for
the arrow $\arrow{(k,2)}{(k,3)}$.

Finally, if $\sum d_i=k+1$ and $d_l=1$, then we choose
$B_1,\ldots,B_{l-1}$ to be horizontal blocks of size $d_1,\ldots, d_{l-1}$
situated as high as possible in the quiver $\QQ_0$. This means that the
union~\mbox{$B_1\cup\ldots\cup B_{l-1}$}
covers all vertical arrows of $\Ar(\QQ_0)$. After this we choose
$B_l$ to be the first basic vertical block.

In other words, we always choose
$B_1, \ldots, B_l$ so that the blocks $B_i$ and $B_j$ are disjoint
for any $i\neq j$, $i,j\in [1,l]$, and for any $i\in [1,l]$
the quiver $\QQ_i$ with
$\Ver(\QQ_i)=\Ver(\QQ_0)$ and
$$\Ar(\QQ_i)=\Ar(\QQ_0)\setminus (B_1\cup\ldots\cup B_i)$$
is admissible.
Note also that
the arrow $\arrow{(k,2)}{(k,3)}$ is not contained in any
of the blocks $B_i$.

We proceed to define the rational maps $\psi_i$.
If $B_1$ is a mixed block define $\widetilde{\psi}_{1}$
by Lemma~\ref{lemma:mixed-block-G2-start}
and put $\psi_1=\widetilde{\psi}_{1}\circ \widetilde{\psi}_{0}$;
in this case $\psi_1$ is the only change of variables we need.

If $B_1$ is a horizontal block define $\widetilde{\psi}_{1}$
by Lemma~\ref{lemma:horizontal-block-G2-start}.
Put $\psi_1=\widetilde{\psi}_{1}\circ \widetilde{\psi}_{0}$.
We put~\mbox{$\WW_1=\varnothing$} and $\gamma_1=1$ if $d_1=1$, and we put
$\WW_1=\{w\}$ and $\gamma_1=w+1$, where
$a_{w,1}$ is the weight variable used in the proof of
Lemma~\ref{lemma:horizontal-block-G2-start}, if $d_1>1$;
we also put $\MM_1=\varnothing$.
Note that~\mbox{$(\MM_1,\WW_1,\gamma_1)$}
is a block history of $(\QQ_1,\gamma_1)$.
Then we consider the remaining horizontal blocks $B_i$
one by one and define changes of variables $\psi_i$
and block histories~\mbox{$(\MM_i,\WW_i,\gamma_i)$}
by Lemmas~\ref{lemma:horizontal-block-G2}
or~\ref{lemma:horizontal-block-G2-narrow}, depending on whether
the size of $B_i$ exceeds~$1$ or equals~$1$. Note that due
to our choice of the blocks we will first have to
apply Lemma~\ref{lemma:horizontal-block-G2} several times,
and then Lemma~\ref{lemma:horizontal-block-G2-narrow} several times.
If $\sum d_i\le k$,
then this is all we need. If $\sum d_i=k+1$, then
we conclude with a construction of $\psi_l$
applying Lemma~\ref{lemma:mixed-block-G2} or
Lemma~\ref{lemma:vertical-block-G2}, depending on whether the block $B_l$
is mixed or vertical.

Our final observation is that in the process described above we can always
perform the next required step due to compatibility of conditions and
assertions of
Lemmas~\ref{lemma:horizontal-block-G2-start},~\ref{lemma:horizontal-block-G2},~\ref{lemma:mixed-block-G2}
and~\ref{lemma:vertical-block-G2} pointed out in
Remark~\ref{remark:lemmas}.
\end{proof}

\section{Periods}
\label{section: periods for Grassmannians}

In this section we
check that Givental's integral gives the so called main period
for complete intersections in projective spaces and Grassmannians of planes.
To do this we start from an integral of the form of the left hand side of~\eqref{eq:restricted integral} over an
indefinite cycle~\mbox{$\delta_1$} (that we will specify later).
Then we %apply Residue Theorem
take residues
several times
obtaining integrals over cycles $\delta_i$ such that~\mbox{$\delta_{i-1}$} is a boundary of a tubular neighborhood of $\delta_i$.
After taking all residues we define a cycle we integrate over and define all
other cycles one by one. It turns out that the cycle~\mbox{$\delta_1$} we recover in this way is homologous
to a standard cycle $\delta_1^0$ we used in~\eqref{eq:restricted integral}.

\begin{definition}
Let $f$ be a Laurent polynomial in $m$ variables $x_1,\ldots,x_m$.
Let~\mbox{$\Omega(x_1,\ldots,x_m)$} be a standard logarithmic form defined in~\eqref{eq: standard form}.
The integral
$$
I_f(t)=
\int\limits_{|x_i|=\varepsilon_i}\frac{
\Omega(x_1,\ldots,x_m)
}{1-tf} = \sum_{j=0}^\infty t^j \cdot \int\limits_{|x_i|=\varepsilon_i}
f^j\Omega(x_1,\ldots,x_m)
 \in\C[[t]]
$$
%where the integral is treated as a formal series given by termwise integration of a formal e
is called \emph{the main period} for $f$, where $\varepsilon_i$ are arbitrary positive numbers.
\end{definition}

\begin{remark}
\label{remark: Picard--Fuchs}
Let~$\phi_j$ be the constant term of~$f^j$.
Then $I_f(t)=\sum \phi_j t^j$.
\end{remark}

The following theorem (which is a mathematical folklore, see~\cite[Proposition 2.3]{Prz08} or~\cite[Theorem~3.2]{CCGGK12} for the proof)
justifies this definition.

\begin{theorem}
\label{theorem: Picard--Fuchs}
Let $f$ be a Laurent polynomial in $m$ variables.
Let $P$ be a Picard--Fuchs differential operator for a pencil of hypersurfaces in a torus
provided by $f$.
% and let
%$$Y_t=\{1-tf=0\}$$
%be a fiber of a pencil
%$$f\colon (\C^*)^m\to\C$$
%over a point
%$t^{-1}\in\C$.
%Then there is a relative
% $(m-1)$-form
%$\omega_t%\in \Omega^{m-1}_{Y_t}
%$ and a (locally
%defined) relative $(m-1)$-cycle $\Delta_t$ %on $Y_t$
%over the base $\C$ such that
%for
%$$
%I_f(t)=\int_{\Delta_t}\omega_t
Then
one has~\mbox{$P[I_f(t)]=0$}.
\end{theorem}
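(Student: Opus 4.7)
The plan is to prove the theorem by applying $P$ directly under the integral sign and recognizing the resulting integrand as an exact form, so that Stokes' theorem on the closed cycle $\delta=\{|x_i|=\varepsilon_i\}$ yields zero. For $t$ in a small punctured disk around the origin the hypersurface $Y_t:=\{1-tf=0\}\subset(\C^*)^m$ is disjoint from $\delta$, so the integrand $\omega(t)=\Omega/(1-tf)$ is a smooth $m$-form on a neighbourhood of $\delta$ depending holomorphically on $t$. Differentiating under the integral is therefore justified, and
$$
\left(\frac{d}{dt}\right)^{\!k}\omega(t)=\frac{k!\,f^k}{(1-tf)^{k+1}}\,\Omega.
$$

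Next I would use the characterization of the Picard--Fuchs operator supplied by the Griffiths--Dwork algorithm (equivalently, by the Gauss--Manin connection on the cohomology of the pencil $\{Y_t\}$). Writing $P=\sum_{k=0}^{N}p_k(t)(d/dt)^k$, that characterization says precisely that $p_k(t)$ can be chosen so that the meromorphic $m$-form
$$
\Theta_t:=\sum_{k=0}^{N}p_k(t)\,\frac{k!\,f^k}{(1-tf)^{k+1}}\,\Omega
$$
on $(\C^*)^m\setminus Y_t$ is exact: there exists an $(m-1)$-form $\eta_t$ on $(\C^*)^m\setminus Y_t$, rationally depending on $t$, with $\Theta_t=d\eta_t$. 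This is the concrete translation of the fact that $P$ annihilates the cohomology class of the Poincar\'e residue $\Res_{Y_t}\omega(t)$ in the Gauss--Manin connection.

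Once $\Theta_t=d\eta_t$ is granted, Stokes' theorem applied to the boundaryless cycle $\delta$ yields
$$
P\bigl[I_f(t)\bigr]=\int_{\delta}\Theta_t=\int_{\delta}d\eta_t=0,
$$
which is the desired conclusion. Note that $\delta$ avoids the polar locus $Y_t$ for $|t|$ small, so no contribution comes from residues along $Y_t$.

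The main obstacle is the second step, namely producing the explicit primitive $\eta_t$. This is essentially a repackaging of the definition of $P$ via algebraic de~Rham cohomology of the affine variety $(\C^*)^m\setminus Y_t$, and it can be carried out in either of two equivalent ways: one may invoke the general theory of the Gauss--Manin connection on $H^{m-1}(Y_t)$ for a pencil of hypersurfaces, or one may run the Griffiths--Dwork reduction algorithm by hand on the forms $f^k/(1-tf)^{k+1}\,\Omega$ and use the syzygies among the logarithmic derivatives $x_i\,\partial_{x_i}(1-tf)=-t\,x_i\,\partial_{x_i}f$ to rewrite a suitable combination as a total differential. Either approach gives what is needed, and the rest of the argument is the purely formal application of Stokes above.
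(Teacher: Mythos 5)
You differentiate under the integral sign and apply Stokes correctly, but the pivotal claim --- that the Picard--Fuchs property of $P$ ``says precisely'' that $\Theta_t=P[\Omega/(1-tf)]$ is exact on $(\C^*)^m\setminus Y_t$ --- is not a correct translation, and it is exactly where the content of the theorem sits. The Picard--Fuchs operator of the pencil is the operator annihilating the periods of the fibres, i.e.\ killing the Gauss--Manin derivatives of the residue class $\mathrm{Res}_{Y_t}[\omega(t)]\in H^{m-1}(Y_t)$. By the Gysin/residue sequence $H^{m-2}(Y_t)\to H^m((\C^*)^m)\to H^m((\C^*)^m\setminus Y_t)\xrightarrow{\ \mathrm{Res}\ } H^{m-1}(Y_t)$, vanishing of the residue of $[\Theta_t]$ only gives $\Theta_t\equiv c(t)\,\Omega$ modulo exact forms, where $[\Omega]$ spans the image of $H^m((\C^*)^m)$; and $[\Omega]$ is \emph{not} exact on the complement, since $\int_\delta\Omega=1$ while $\delta\subset(\C^*)^m\setminus Y_t$. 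Hence your Stokes computation only yields $P[I_f(t)]=c(t)$, and proving $c(t)=0$ is precisely the assertion of the theorem. Indeed, since $H_m((\C^*)^m\setminus Y_t)$ is generated by tubes around $(m-1)$-cycles of $Y_t$ together with the class of $\delta$, exactness of $\Theta_t$ is \emph{equivalent} to the conjunction of ``$P$ annihilates the fibre periods'' (the definition of $P$) and ``$P[I_f]=0$'' (the statement to be proved); so the key step of your argument assumes what is to be shown, and neither of the two ways you propose to ``grant'' it (quoting Gauss--Manin generalities, or running Griffiths--Dwork by hand) addresses the pole-order-zero term $c(t)\Omega$ that the reduction leaves behind.

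For comparison: the paper itself gives no proof, quoting the statement as folklore with references to~\cite[Proposition 2.3]{Prz08} and~\cite[Theorem~3.2]{CCGGK12}. The standard arguments there close exactly the gap above by realising $I_f(t)$ (up to elementary normalisation) as an honest period of the residue form over a locally constant family of $(m-1)$-cycles in the fibres --- via the tube/residue construction relating the torus cycle $\delta$ to cycles in $Y_t$ --- after which $P[I_f]=0$ holds by the very definition of the Picard--Fuchs operator. If you want to keep your Stokes-type argument, you must in addition prove that the $[\Omega]$-component $c(t)$ of $P[\omega(t)]$ vanishes, which amounts to the same identification.
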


Consider a smooth complete intersection $Y\subset \P^{N}$ of hypersurfaces of degrees $d_1,\ldots,d_l$.
Denote
$$d_0=N+1-\sum d_i.$$
Assume that $d_0\ge 1$, that is $Y$ is a Fano variety.
Anticanonical Givental's Landau--Ginzburg model for $Y$ is given in a torus
$$(\CC^*)^{N}\cong \Spec \C [a_{i,j}^{\pm 1}, y_s^{\pm 1}], \quad i\in[1,l], j\in [1,d_i], s\in [1,d_0-1],$$
by equations
\begin{equation}
\label{projspace}
a_{i,1}+\ldots+a_{i,d_i}=1, \quad i\in [1,l],
\end{equation}
with superpotential
$$w=y_1+\ldots+y_{d_0-1}+\frac{1}{\prod a_{i,j}\prod y_i}.$$

The subvariety cut out by equations~\eqref{projspace}
after change of variables given by
$$x_{i,j}=\frac{a_{i,j}}{a_{i,d_i}},\quad i\in [1,l], j\in [1,d_i-1]$$
is birational to a torus
$$(\CC^*)^m\cong \Spec \C[x_{i,j}^{\pm 1}, y_s^{\pm 1}], \quad i\in[1,l], j\in [1,d_i-1], s\in [1,d_0-1],$$
where $m=N-l$.
The superpotential $w$ gives a Laurent polynomial
$$
f_{Y}=\frac{\prod_{i=1}^l(x_{i,1}+\ldots+x_{i,d_i-1}+1)^{d_i}}{\prod_{i=1}^l \prod_{j=1}^{d_i-1} x_{i,j}\prod_{j=1}^{d_0-1} y_j}+y_1+\ldots+y_{d_0-1}
$$
which is a toric Landau--Ginzburg model for $Y$, see~\cite[\S3.2]{Prz13} and~\cite{ILP13}.

\begin{proposition}
\label{proposition:periods-for-proj-space}
One has
$$I_Y=\int\limits_{
\substack{|x_{i,j}|=\varepsilon_{i,j}\\ |y_s|=\varepsilon_s}} \frac{\Omega(x_{1,1}, \ldots, x_{l,d_l-1}, y_1,\ldots, y_{d_0-1})}{1-tf_Y}.$$
\end{proposition}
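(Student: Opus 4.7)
The plan is to compute the Givental integral coefficient by coefficient in $t$ and identify the result with the main period of $f_Y$. For $Y\subset\P^N$, choosing the nef-partition with $|E_i|=d_i$, $i\in[1,l]$, and $|E_0|=d_0$, and eliminating one variable from $E_0$ using the torus relation, formula~\eqref{eq:restricted integral} specializes (after setting $q=1$) to
\[
I_Y=\int_{\delta_1}\frac{\Omega(a_{1,1},\ldots,a_{l,d_l},y_1,\ldots,y_{d_0-1})}{\prod_{i=1}^l\bigl(1-\sum_{j=1}^{d_i}a_{i,j}\bigr)\cdot(1-tw)},\quad w=\sum_s y_s+\frac{1}{\prod a_{i,j}\prod y_s},
\]
where $\delta_1$ is homologous to the product $\delta_1^0=\{|a_{i,j}|=\eta_{i,j},\ |y_s|=\eta_s\}$ of small circles.

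First I would perform the monomial change of variables $x_{i,j}=a_{i,j}/a_{i,d_i}$ for $j\in[1,d_i-1]$, keeping the variables $a_{i,d_i}$ and $y_s$; by~\eqref{equation: integral changes} the standard logarithmic form is preserved up to sign. Choosing $\eta_{i,j}=\varepsilon_{i,j}\eta_{i,d_i}$ for $j<d_i$, the image of $\delta_1^0$ in the new coordinates is the product cycle $\{|x_{i,j}|=\varepsilon_{i,j},\ |a_{i,d_i}|=\eta_{i,d_i},\ |y_s|=\eta_s\}$. Setting $S_i=1+x_{i,1}+\ldots+x_{i,d_i-1}$, the hypersurface factor becomes $1-a_{i,d_i}S_i$, and the superpotential becomes $w=A+B$ with $A=y_1+\ldots+y_{d_0-1}$ and $B=1/(\prod_i a_{i,d_i}^{d_i}\prod x_{i,j}\prod y_s)$.

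The main step is to extract, for each $n$, the constant term in $a_{1,d_1},\ldots,a_{l,d_l}$ of $\prod_i(1-a_{i,d_i}S_i)^{-1}w^n$, which via the residue formula quoted in Section~\ref{section:toric} computes the residue of the integrand along the iterated cycle $\{|a_{i,d_i}|=\eta_{i,d_i}\}$. Expanding $(1-a_{i,d_i}S_i)^{-1}=\sum_k(a_{i,d_i}S_i)^k$ and the binomial $(A+B)^n=\sum_m\binom{n}{m}A^{n-m}(B\,a_{i,d_i}^{d_i})^m a_{i,d_i}^{-md_i}$, balancing $a_{i,d_i}$-degrees forces $k=md_i$, so the $a_{i,d_i}^0$-coefficient equals $(A+B\cdot a_{i,d_i}^{d_i}\cdot S_i^{d_i})^n$; the effect of the extraction is to replace $B$ by $B\cdot a_{i,d_i}^{d_i}\cdot S_i^{d_i}$. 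Iterating for all $i\in[1,l]$ turns $B$ into $\prod_i S_i^{d_i}/(\prod x_{i,j}\prod y_s)$ and $(A+B)^n$ into $f_Y^n$, so the coefficient of $t^n$ in $I_Y$ equals the constant term of $f_Y^n$ in the variables of $V=\{x_{i,j},y_s\}$, which is the coefficient of $t^n$ in $I_{f_Y}(t)$ by Remark~\ref{remark: Picard--Fuchs}. The main technical point is that the product of small circles in the new coordinates lies in the correct homology class to apply the residue formula; this is guaranteed for sufficiently small choices of $\varepsilon_{i,j},\eta_{i,d_i},\eta_s$, when the remaining factors of the integrand are holomorphic on the product cycle.
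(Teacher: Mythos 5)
Your argument is correct, and it reaches the conclusion by a route that differs from the paper's in its key computational step. Both proofs start from the specialized integral \eqref{eq:restricted integral} and perform the same monomial substitution $x_{i,j}=a_{i,j}/a_{i,d_i}$, but from there the paper introduces the new coordinates $Q_i=1-\bigl(\sum_j x_{i,j}+1\bigr)a_{i,d_i}$, applies the residue formula of Section~\ref{section:toric} iteratively in the $Q_i$, and only afterwards pins down the cycles: it sets the final cycle $\Delta$ to be the product of circles, reconstructs $\delta_l,\ldots,\delta_2,\delta_1''$ as boundaries of tubular neighborhoods, and checks at the very end that $\delta_1$ is homologous to $\delta_1^0$. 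You instead fix the standard product-of-circles cycle from the outset (using the assertion in \eqref{eq:restricted integral} that $\delta_1$ may be taken homologous to $\delta_1^0$, so that the integral can be evaluated on $\delta_1^0$ itself), and you replace the iterated residues by an elementary expansion: the geometric series for $\prod_i(1-a_{i,d_i}S_i)^{-1}$, the binomial expansion of $w^n$, and the degree-balancing in each $a_{i,d_i}$, which correctly produces $(A+B\,a_{i,d_i}^{d_i}S_i^{d_i})^n$ at each step and hence the constant term of $f_Y^n$ as the coefficient of $t^n$; Remark~\ref{remark: Picard--Fuchs} then identifies the result with the right-hand side of Proposition~\ref{proposition:periods-for-proj-space}. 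What your approach buys is the elimination of the residue formula and of the backward cycle bookkeeping, at the price of term-by-term integration (justified, as you say, by choosing the radii small so that $|a_{i,d_i}S_i|<1$ on the cycle and the series converge uniformly), and it makes the period condition visible coefficient by coefficient. The only caveat worth recording is a shift in where the cycle identification is assumed: the paper's proof treats $\delta_1$ as indefinite and verifies its homology class at the end, whereas you import that identification from Section~\ref{section:toric}; since \eqref{eq:restricted integral} is stated there with the cycle homologous to $\delta_1^0$, this is an acceptable starting point, but it is the one step of the paper's argument that your computation does not reprove.
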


\begin{proof}
%After changes of variables discussed in Section~\ref{section:toric}
%one can easily see that
Consider an integral
$$
I=
\int_{\delta_1}%\limits_{
%\substack{|a_{i,j}|=\varepsilon_{i,j}\\ |y_s|=\varepsilon_s}}
\frac{
\Omega(a_{1,1},\ldots,a_{l,d_l}, y_1,\ldots, y_{d_0-1})
}
{\prod_{i=1}^l\left(1-\left(a_{i,1}+\ldots+a_{i,d_i}\right)\right)\cdot
\left(1-t\cdot\left(\frac{1}{\prod a_{i,j}\prod y_s}+\sum y_s\right)\right)}
$$
for some $N$-cycle $\delta_1$, cf.~\eqref{eq:restricted integral}.

Put
$$x_{i,j}=\frac{a_{i,j}}{a_{i,d_i}},\quad i\in [1,l], j\in [1,d_i-1].$$
Then %(here and below we denote new numbers~$\varepsilon_{i,j}$ and $\varepsilon_s$ given
%by natural scaling by the same symbols~$\varepsilon_{i,j}$
%and $\varepsilon_s$ for simplicity)
one has
$$
I=
\int_{\delta_1'}%\limits_{\substack{|x_{i,j}|=\varepsilon_{i,j}\\|a_{i',j'}|=\varepsilon'_{i',j'}\\ |y_s|=\varepsilon_s}}
\frac{\pm
\Omega (x_{1,1},\ldots, x_{1,d_1-1},\ldots, x_{l,1},\ldots, x_{l,d_l-1}, a_{1,d_1},\ldots, a_{l, d_l}, y_1,\ldots, y_{d_0-1})
}{\prod_{i=1}^l\left(1-\left(\sum_{j=1}^{d_i-1} x_{i,j}+1\right)\cdot
a_{i,d_i}\right)\cdot \left(1-t\cdot\left(
\frac{1}{\prod x_{i,j}\prod a_{i,d_i}^{d_i}\prod y_s}+\sum y_s\right)\right)}
$$
for some $N$-cycle $\delta_1'$.

Finally put $$Q_i=1-\left(\sum_{j=1}^{d_i-1} x_{i,j}+1\right)\cdot
a_{i,d_i},\quad i\in [1,l],$$
so that
$$
a_{i,d_i}=\frac{1-Q_i}{\sum_{j=1}^{d_i-1} x_{i,j}+1}.
$$
After this we have
$$
I=%\pm
%\frac{\pm 1}{(2\pi)^{N+k}}\cdot \\ \cdot
\int_{\delta_1''}%\limits_{\substack{|a_{i,j}|=\varepsilon_{i,j}\\ |y_s|=\varepsilon_s}}
\frac{
\pm\Omega(x_{1,1}, \ldots, x_{l,d_l-1}, Q_1,\ldots, Q_l, y_1,\ldots, y_{d_0-1})
}
{\prod_{i=1}^l \left(1-Q_i\right)\cdot \left(1-t\cdot\left(\frac{\prod_{i=1}^l(x_{i,1}+\ldots+x_{i,d_i-1}+1)^{d_i}}{\prod_{i=1}^l (1-Q_i)^l\prod_{j=1}^{d_i-1} x_{i,j}\prod_{s=1}^{d_0-1} y_s}+y_1+\ldots+y_{d_0-1}\right)\right)}
$$
for some $N$-cycle $\delta_1''$.
Taking residues with respect to variables $Q_i$,
possibly reordering and renaming variables one gets
$$
I=\int_{\Delta} \frac{\Omega(x_{1,1}, \ldots, x_{l,d_l-1}, y_1,\ldots, y_{d_0-1})}{1-tf_Y}
$$
for some $m$-cycle $\Delta$.

Put $\Delta=\{|x_{i,j}|=\varepsilon_{i,j},|y_s|=\varepsilon_s\}$
and define cycles $\delta_2,\ldots,\delta_{l+1}=\Delta$ so that $\delta_{i-1}$ is a boundary of a tubular neighborhood of $\delta_i$ for $i\in [3,l+1]$, and $\delta_1''$ is a boundary of a tubular neighborhood of $\delta_2$.
One can check that $\delta_1''$, and thus also $\delta_1$ is homologous to a cycle $$\delta_1^0=\{|a_{i,j}|=\varepsilon_{i,j},|y_s|=\varepsilon_s\}$$
which completes the proof.
\end{proof}

Now we check that Givental's integral gives the main period for complete intersections in Grassmannians of planes as well.

\begin{proposition}
\label{proposition: periods}
Consider a smooth Fano complete intersection
$$
Y=\G(2,k+2)\cap Y_1 \cap\ldots\cap Y_l
$$
and a nef-partition corresponding to the choice of blocks from the proof of
Theorem~\ref{theorem: main}.
Let~$I^0_Y$ be Givental's integral for this nef-partition, and $\widehat f_Y=F_{\QQ_l,V_l,R_l}$ be a Laurent polynomial in $m=2k-l$ variables
$x_1,\ldots, x_m$ given by Theorem~\ref{theorem: main}. Put $f_Y=\widehat f_Y-l$.
Then
$$
I^0_Y=\int\limits_{|x_i|=\varepsilon_i} \frac{\Omega(x_1,\ldots,x_m)}{1-tf_Y}.
$$
\end{proposition}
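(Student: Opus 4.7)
The plan is to follow the strategy of the proof of Proposition~\ref{proposition:periods-for-proj-space}, using the changes of variables from Theorem~\ref{theorem: main} (together with appropriate perturbations) in place of the monomial substitutions used there, and taking one residue per block. By the derivation in Section~\ref{section:toric} (adapted to the Grassmannian setting via Section~\ref{section: ci in Grassmainans}), we first write Givental's integral in the form
$$
I_Y^0=\int_{\delta_1}\frac{\pm\,\Omega(a_{i,j})}{\prod_{i=1}^{l}\bigl(1-F_{\QQ_0,V_0,R_0,B_i}\bigr)\cdot\bigl(1-tF_{\QQ_0,V_0,R_0,C}\bigr)}
$$
for some $2k$-cycle $\delta_1$, where $C=\Ar(\QQ_0)\setminus\bigcup_i B_i$ collects the arrows producing the superpotential. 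The initial monomial change $\widetilde{\psi}_0$ from the proof of Theorem~\ref{theorem: main} is a torus automorphism and so preserves the logarithmic form up to sign, bringing the integrand into the triplet form $(\widetilde{\QQ}_0,\widetilde{V}_0,\widetilde{R}_0)$.

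Next, introduce formal parameters $U_1,\ldots,U_l$ and, at the $i$-th step, regard the block equation $1-F_{\QQ_{i-1},V_{i-1},R_{i-1},B_i}=0$ as the perturbed equation $1-F_{\QQ_{i-1},V_{i-1},R_{i-1},B_i}=U_i$. Each of Remarks~\ref{remark:horizontal-block-G2-start},~\ref{remark:horizontal-block-G2},~\ref{remark:horizontal-block-G2-narrow},~\ref{remark:mixed-block-G2-start},~\ref{remark:mixed-block-G2} and~\ref{remark:vertical-block-G2} provides in the corresponding case an explicit formula expressing the main variable of $B_i$ as a rational function $g_i$ of $U_i$ and the remaining variables; at $U_i=0$ this specializes to the change of variables used in Theorem~\ref{theorem: main}. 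Successive application of these perturbed transformations turns each factor $\bigl(1-F_{\QQ_{i-1},V_{i-1},R_{i-1},B_i}\bigr)$ in the denominator into $U_i$.

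The technical heart is to verify that the logarithmic form transforms cleanly. For Remark~\ref{remark:horizontal-block-G2}, for instance, $g_i=M'a_{r+1,2}'/\bigl[(1-U_i)M'a_{s-1,1}'-P'\bigr]$, so a direct computation gives
$$
\left.\frac{\partial_{U_i}g_i}{g_i}\right|_{U_i=0}=\frac{a_{s-1,1}'}{a_{s-1,1}''};
$$
the accompanying non-logarithmic shift $a_{s-1,1}''=a_{s-1,1}'-P''/M''$ of the weight variable pulls $da_{s-1,1}'/a_{s-1,1}'$ back to $(a_{s-1,1}''/a_{s-1,1}')\cdot(da_{s-1,1}''/a_{s-1,1}'')$, contributing the reciprocal factor. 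The two cancel, so up to sign the $i$-th perturbed change of variables sends
$$
\frac{da_{\mathrm{main}}}{a_{\mathrm{main}}}\wedge\frac{da_{\mathrm{weight}}}{a_{\mathrm{weight}}}\;\longmapsto\;dU_i\wedge\frac{da_{\mathrm{weight}}''}{a_{\mathrm{weight}}''}.
$$
Completely parallel verifications take care of Lemmas~\ref{lemma:horizontal-block-G2-start},~\ref{lemma:mixed-block-G2-start},~\ref{lemma:mixed-block-G2} and~\ref{lemma:vertical-block-G2}, as well as the narrow-block Lemma~\ref{lemma:horizontal-block-G2-narrow}, where the weight-variable shift takes the simpler form $a_{r,1}'=a_{r,1}-a_{r+1,1}$.

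Together with the factor $1/U_i$ now in the denominator, taking the residue at $U_i=0$ eliminates both $dU_i$ and $1/U_i$, producing a clean logarithmic form in the remaining variables. After processing all $l$ blocks we obtain
$$
I_Y^0=\int_{\delta'}\frac{\pm\,\Omega(x_1,\ldots,x_m)}{1-tf_Y},
$$
where the shift by $l$ in $f_Y=\widehat{f}_Y-l$ reflects the fact that on the complete intersection each block sum $F_{\QQ_0,V_0,R_0,B_i}$ equals~$1$, so that the pullback of the superpotential $F_{\QQ_0,V_0,R_0,C}$ equals $F_{\QQ_l,V_l,R_l}-l$. To identify $\delta'$ with the standard torus $\{|x_i|=\varepsilon_i\}$, declare this torus to be the innermost cycle and reconstruct the intermediate cycles as iterated boundaries of tubular neighborhoods, exactly as in the final step of the proof of Proposition~\ref{proposition:periods-for-proj-space}. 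The main obstacle is therefore the case-by-case Jacobian cancellation of the third paragraph, but the formulas in Remarks~\ref{remark:horizontal-block-G2-start}--\ref{remark:vertical-block-G2} are arranged precisely so that this cancellation is uniform across all six scenarios.
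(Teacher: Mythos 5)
Your proposal is correct and follows essentially the same route as the paper's own proof: start from the Givental integral over an indefinite cycle, apply the initial monomial change, use the $U$-perturbed formulas of Remarks~\ref{remark:horizontal-block-G2-start}--\ref{remark:vertical-block-G2} to take one residue per block while checking that the Jacobian of the main-variable substitution cancels against the weight-variable shift, and finally reconstruct the cycles from the innermost torus outwards. The only caveat is that your displayed identity $\frac{da_{\mathrm{main}}}{a_{\mathrm{main}}}\wedge\frac{da_{\mathrm{weight}}}{a_{\mathrm{weight}}}\mapsto dU_i\wedge\frac{da''_{\mathrm{weight}}}{a''_{\mathrm{weight}}}$ holds only after restriction to $U_i=0$ (for general $U_i$ an extra $U_i$-dependent factor appears), but since the residue uses only that restriction, the argument is unaffected and coincides with the paper's computation.
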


\begin{proof}
We choose blocks $B_1,\ldots,B_l$ as in the proof of Theorem~\ref{theorem: main} and put
$$B_0=\Ar(\QQ_0)\setminus \left(\cup_{i\in [1,l]} B_i\right).$$
Note that one has $f_Y=\psi^*F_{\QQ_0,V_0,R_0,B_0}$ where $\psi\colon \TT(V_l)\dasharrow \TT(V_0)$
is the change of variables given by Theorem~\ref{theorem: main}.
Consider an integral
$$
I=\int_{\delta_1}\frac{\Omega(a_{i,j})}{\prod_{s=1}^l (1-F_{\QQ_0,V_0,R_0,B_i})\cdot(1-tF_{\QQ_0,V_0,R_0,B_0})}
$$
over an indefinite $2k$-cycle $\delta_1$, cf.~\eqref{eq:restricted integral}.
Applying the monomial change variables described in the beginning of the proof of Theorem~\ref{theorem: main}
we obtain an integral
$$
I=\int_{\delta_1'}\frac{\pm \Omega(a,\widetilde a_{i,j})}{\prod_{s=1}^l (1-F_{\widetilde \QQ_0,\widetilde
V_0,\widetilde R_0,B_i})\cdot(1-tF_{\widetilde \QQ_0,\widetilde V_0,\widetilde R_0,B_0})}
$$
for some $2k$-cycle $\delta_1'$.

We follow changes of variables from the proof of
Theorem~\ref{theorem: main}.
Consider a form
$$
\Omega=\frac{\Omega(a_1,\ldots,a_p)\cdot F(a_1,\ldots,a_p)}{1-F_{\QQ,V,R,B}},
$$
where $F_{\QQ,V,R,B}$ depends on some variables $a_i$
and a function $ F(a_1,\ldots,a_p)$ is chosen so that
$$
I=\int_{\delta_j}%\limits_{|a_i|=\varepsilon_i}
\frac{\Omega}{1-tf}$$
for some Laurent polynomial $f$ and some $p$-cycle $\delta_j$.
Denote $1-F_{\QQ,V,R,B}$ by $U$.
Depending on whether the block $B$ is a horizontal block containing the arrow $\arrow{(0,1)}{(1,1)}$,
a horizontal block of size at least $2$ not containing the arrow~\mbox{$\arrow{(0,1)}{(1,1)}$},
a basic horizontal block not containing the arrow~\mbox{$\arrow{(0,1)}{(1,1)}$}, a mixed block containing the arrow~\mbox{$\arrow{(0,1)}{(1,1)}$},
a mixed block not containing the arrow~\mbox{$\arrow{(0,1)}{(1,1)}$}, or the first basic vertical block,
we follow changes of variables described in Lemmas~\ref{lemma:horizontal-block-G2-start},~\ref{lemma:horizontal-block-G2},~\ref{%
lemma:horizontal-block-G2-narrow},~\ref{lemma:mixed-block-G2-start},~\ref{lemma:mixed-block-G2}, or~\ref{lemma:vertical-block-G2} respectively.

Suppose that we are not in the situation described in Lemma~\ref{lemma:horizontal-block-G2-narrow}. Let $a_1$ be a main variable and $a_2$ be a weight variable
for the change of variables for
the change of variables that agrees with~\mbox{$(\QQ,V,R)$} and~$B$.
The latter can be decomposed in several changes of variables.
The first change of variables is monomial
so by equation~\eqref{equation: integral changes}
a standard logarithmic form in new variables is equal to a standard logarithmic form in the initial variables $a_i$.
If we are in the situation described in Lemma~\ref{lemma:horizontal-block-G2-narrow}, then we do not change variables on this step
and choose $a_2$ to be the variable $a_{r,1}$ in the notation of Lemma~\ref{lemma:horizontal-block-G2-narrow}.

Keeping the same notation for changed variables for simplicity the form $\Omega$ can be written down as
$$
\frac{da_1}{a_1}\wedge \frac{da_2}{a_2}\wedge\frac{\Omega(a_3,\ldots,a_p)\cdot F(a_1,\ldots,a_p)}{U}.
$$
We put
$$
a_1=\frac{T}{(1-U)\cdot a_2-S}
$$
for certain Laurent polynomials $S$ and $T$ in $a_3,\ldots,a_p$,
cf.~\eqref{eq:main-variable-2-horizontal-start-U}.

After the this substitution the form $\Omega$ is
\begin{multline*}
\frac{da_1}{a_1}\wedge \frac{da_2}{a_2}\wedge\frac{\Omega(a_3,\ldots,a_p)\cdot F(a_1,\ldots,a_p)}{U}=\\
=\big((1-U)\cdot a_2-S\big) \cdot d\left(\frac{1}{(1-U)\cdot a_2-S}\right)\wedge \frac{da_2}{a_2}\wedge
\frac{\Omega(a_3,\ldots,a_p)\cdot F\left(\frac{1}{(1-U)a_2-S},a_2,\ldots,a_p\right)}{U}=\\
=\frac{-1}{(1-U)\cdot a_2-S}\cdot\frac{dU}{U}\wedge da_2\wedge\Omega(a_3,\ldots,a_p)\cdot F\left(\frac{1}{(1-U)a_2-S},a_2,\ldots,a_p\right).
\end{multline*}
After taking a residue with respect to $U$ we get
$$
\Res_U\Omega=\frac{-1}{a_2-S}\cdot da_2\wedge\Omega(a_3,\ldots,a_p)\cdot F\left(\frac{1}{a_2-S},a_2,\ldots,a_p\right).
$$

We put
$$
a_2=R\cdot b+S
$$
for some Laurent polynomial $R$ in $a_3,\ldots,a_p$,
cf.~\eqref{eq:weight-variable-change-horizontal-start}.
Now our new variables are $b, a_3,\ldots,a_p$.
After this substitution we get
\begin{multline*}
\Res_U\Omega=\frac{-1}{a_2-S} \cdot da_2\wedge\Omega(a_3,\ldots,a_p)\cdot F\left(\frac{1}{a_2-S},a_2,\ldots,a_p\right)=\\
=\frac{-1}{R\cdot b} \cdot d\left(R\cdot b+S\right)\wedge\Omega(a_3,\ldots,a_p)\cdot F\left(\frac{1}{R\cdot b},R\cdot b+S,a_3\ldots,a_p\right)=\\
=-\frac{db}{b}\wedge\Omega(a_3,\ldots,a_p)\cdot F\left(\frac{1}{R\cdot b},R\cdot b+S,a_3\ldots,a_p\right).
\end{multline*}
Thus
$$
I=\int_{\delta_j}%\limits_{\substack{|a_i|=\varepsilon_i\\ i\in [1,r]}}
\frac{\Omega}{1-tf} =
\int_{\delta_{j+1}}%\limits_{\substack{|a_i|=\varepsilon_i\\ i\in [3,r]}}
-\Omega(b,a_3,\ldots,a_p)\cdot \overline{F}\left(b,a_3\ldots,a_p\right)
$$
for some $(p-1)$-cycle $\delta_{j+1}$, where
$$
\overline{F}\left(b,a_3\ldots,a_p\right)=F\left(\frac{1}{R\cdot b},R\cdot b+S,a_3\ldots,a_p\right).
$$

Applying this procedure step by step $l$ times following the proof of Theorem~\ref{theorem: main},
we define cycles $\delta_2,\ldots,\delta_{l+1}=\Delta$ so that $\delta_{i-1}$ is a boundary of a tubular neighborhood of $\delta_i$ for $i\in [3,l+1]$, and $\delta_1'$ is a boundary of a tubular neighborhood of $\delta_2$
and arrive to an integral
$$
\int\limits_{\Delta} \frac{\Omega(x_1,\ldots,x_m)}{1-tf_Y}
$$
for some Laurent polynomial $f_Y$ in some variables $x_1,\ldots,x_m$.
Put $\Delta=\{|x_{i}|=\varepsilon_{i}\}$
and recover the cycles  $\delta_1',\delta_2,\ldots,\delta_{l}$.
One can check that $\delta_1'$, and thus also $\delta_1$ is homologous to a cycle $$\delta_1^0=\{|a_{i,j}|=\varepsilon_{i,j}\}$$
which completes the proof.
\end{proof}

\begin{corollary}
\label{corollary:main}
The proof of Theorem~\ref{theorem: main} provides weak Landau--Ginzburg models for complete intersections in Grassmannians of planes.
\end{corollary}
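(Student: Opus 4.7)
The plan is to assemble Corollary~\ref{corollary:main} from three ingredients that are already available at this point of the paper: Theorem~\ref{theorem: main}, Proposition~\ref{proposition: periods}, and the identification $\widetilde{I}_0^Y = I_Y^0$ of the regularized $I$-series with Givental's integral for complete intersections in Grassmannians of planes (discussed after Conjecture~5.2.1 in~\cite{BCFKS98} and justified in the $n=2$ case by~\cite[Proposition~3.5]{BCK03}, see Theorem~\ref{theorem: I-series}).

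First, given a smooth Fano complete intersection $Y = \G(2,k+2)\cap Y_1\cap\ldots\cap Y_l$, I would choose the blocks $B_1,\ldots,B_l$ and the chain of changes of variables $\psi_1,\ldots,\psi_l$ exactly as in the proof of Theorem~\ref{theorem: main}. This produces a Laurent polynomial $\widehat{f}_Y = F_{\QQ_l,V_l,R_l}$ in $2k - l$ variables. Following the convention of Proposition~\ref{proposition: periods}, I set $f_Y = \widehat{f}_Y - l$, which is the natural superpotential once the $l$ constant contributions from the hypersurface equations $\sum_{s\in E_j} X_s = 1$ are absorbed (cf.\ Remark~\ref{remark:shift}).

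Next I would invoke Proposition~\ref{proposition: periods}, which identifies Givental's integral for this nef-partition with the main period of $f_Y$:
\[
I_Y^0 = \int\limits_{|x_i|=\varepsilon_i} \frac{\Omega(x_1,\ldots,x_m)}{1 - t f_Y}.
\]
By Remark~\ref{remark: Picard--Fuchs} the right-hand side equals $\sum_{j\ge 0}\phi_j t^j$, where $\phi_j$ is the constant term of $f_Y^j$. On the other hand, by the $n=2$ case of~\cite[Conjecture~5.2.3]{BCFKS97}, proved as~\cite[Proposition~3.5]{BCK03}, one has $\widetilde{I}_0^Y = I_Y^0$ after restricting to the anticanonical direction $t = q^{-K_Y}$. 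Comparing coefficients of $t^j$ in the two expansions $\widetilde{I}^Y(t) = 1 + \sum_j a_j t^j$ and $I_{f_Y}(t) = \sum_j \phi_j t^j$ then yields $a_j = \phi_j$ for every $j$, which is exactly the period condition of Definition~\ref{definition: toric LG}. Hence $f_Y$ is a very weak Landau--Ginzburg model for $Y$.

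There is essentially no main obstacle: the heavy lifting lies in Theorem~\ref{theorem: main} (the chain of birational substitutions that turns the Batyrev--Ciocan-Fontanine--Kim--van~Straten model into a torus with Laurent superpotential) and in Proposition~\ref{proposition: periods} (tracking how these substitutions act on Givental's integral through successive residue computations, using equations~\eqref{eq:main-variable-2-horizontal-start-U}--\eqref{eq:main-variable-2-vertical-U} and verifying that the final cycle is homologous to $\delta_1^0 = \{|a_{i,j}|=\varepsilon_{i,j}\}$). The only subtlety worth double-checking is the normalization: one must be careful that the shift by $l$ going from $\widehat{f}_Y$ to $f_Y$ matches the shift $w \mapsto w' = w+l$ of Remark~\ref{remark:shift}, so that the Givental integral associated to the anticanonical superpotential is indeed the main period of $f_Y$ and not of some translate. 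Once this bookkeeping is done, the corollary follows by a one-line comparison of coefficients.
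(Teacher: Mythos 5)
Your argument is correct and follows essentially the same route as the paper: combine Theorem~\ref{theorem: main} with Proposition~\ref{proposition: periods}, use the identification $\widetilde{I}^Y_0=I^0_Y$ from~\cite{BCFKS98} and~\cite[Proposition~3.5]{BCK03}, and conclude via Remark~\ref{remark: Picard--Fuchs} that the period condition holds for $f_Y=\widehat f_Y-l$. The shift bookkeeping you flag is exactly how the paper handles it (Remark~\ref{remark:shift} and the normalization in Proposition~\ref{proposition: periods}), so no further work is needed.
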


\begin{proof}
Let $Y$ be a complete intersection in a Grassmannian of planes and let $f_Y$
be a Laurent polynomial given by Theorem~\ref{theorem: main}. In other words a family of hypersurfaces
in a torus corresponding to $f_Y$ is relatively birational to anticanonical Givental's Landau--Ginzburg model for~$Y$. By~\cite{BCFKS98}
and~\cite[Proposition 3.5]{BCK03}
Givental's integral for~$(Y,\omega_Y)$, where $\omega_Y$ is an anticanonical form, equals $\widetilde{I}^Y$. On the other hand, by
Remark~\ref{remark: Picard--Fuchs} it is a constant terms series of $f_Y$,
i.\,e. a main period of~$Y$.
\end{proof}

\section{Hyperplane sections}
\label{section:hyperplane-sections}

In this section we
apply Theorem~\ref{theorem: main}
to obtain explicit formulas
for Laurent polynomials corresponding
to Fano varieties that are sections of Grassmannians of planes
by several hyperplanes.
We will use notation introduced in
Theorem~\ref{theorem: main}.
Keeping in mind Remark~\ref{remark:shift} and Proposition~\ref{proposition: periods},
we will be more interested in the shifted Laurent polynomials
$F_{\QQ_l,V_l,R_l}-l$ than $F_{\QQ_l,V_l,R_l}$ themselves.

\begin{lemma}
\label{lemma:hyperplane-sections}
Suppose that $n=2$, $l\le k$ and $d_1=\ldots=d_l=1$.
Consider the triplet~\mbox{$(\QQ_0,V_0,R_0)$}.
Let $B_i$, $i\in [1,l]$, be the $(i-1)$-th basic horizontal block.
Then there is a sequence of
triplets $(\QQ_i,V_i,R_i)$, $i\in [1,l]$,
and a sequence of changes of variables
$$\psi_i\colon \TT(V_i)\dasharrow \TT(V_{i-1}),\quad i\in [1,l],$$
such that the change of variables
$\psi_i$ agrees with the triplet $(\QQ_{i-1},V_{i-1},R_{i-1})$
and the block~$B_i$, the triplet $(\QQ_i,V_i,R_i)$ is a transformation
of the triplet
$(\QQ_{i-1},V_{i-1},R_{i-1})$ associated to $\psi_i$,
one has
$$
V_l=\{a_{i,1}\mid i\in [1,k]\}\cup
\{a_{i,2}\mid i\in [l,k-1]\},
$$
and the following
conditions hold:
\begin{itemize}
\item[(i)]
the quiver $\QQ_l$ does not
contain vertical arrows $\alpha$ such that $h(\alpha)=(i,j)$ for
$i\in [1,l]$, $j\in [1,2]$;
\item[(ii)]
one has $R(k,2)=1$;
\item[(iii)]
for $(i,j)$ with $i\in [l, k]$,
$j\in [1,2]$, $(i,j)\neq (k,2)$,
one has $R_l(i,j)=a_{i,j}$;
\item[(iv)] for any $i\in [1,l-1]$ one has
$$R_l(i,1)=a_{l,1}+a_{l-1,1}+\ldots+a_{i,1};$$
\item[(v)] for any $i\in [1,l-1]$ one has
$$R_l(i,2)=\frac{a_{l,2}\cdot
(a_{l,1}+a_{l-1,1})\cdot
(a_{l,1}+a_{l-1,1}+a_{l-2,1})\cdot\ldots\cdot
(a_{l,1}+a_{l-1,1}+\ldots+a_{i,1})}
{a_{l-1,1}\cdot a_{l-2,1}\cdot\ldots\cdot a_{i,1}}$$
if $l<k$, and
$$R_k(i,2)=\frac{
(a_{k,1}+a_{k-1,1})\cdot
(a_{k,1}+a_{k-1,1}+a_{k-2,1})\cdot\ldots\cdot
(a_{k,1}+a_{k-1,1}+\ldots+a_{i,1})}
{a_{k-1,1}\cdot a_{k-2,1}\cdot\ldots\cdot a_{i,1}}$$
if $l=k$.
\item[(vi)] one has
$$R_l(k,3)=R(1,1)=a_{l,1}+a_{l-1,1}+\ldots+a_{1,1}.$$
\end{itemize}
\end{lemma}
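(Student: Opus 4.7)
My plan is to prove the lemma by induction on $i\in[1,l]$, with inductive claim that $(\QQ_i,V_i,R_i)$ satisfies conditions (i)--(vi) of the lemma with $l$ replaced by $i$. For the base case $i=1$, the block $B_1=\{\arrow{(0,1)}{(1,1)}\}$ has size one and contains the arrow $\arrow{(0,1)}{(1,1)}$, so I apply Lemma~\ref{lemma:horizontal-block-G2-start} in the degenerate case treated at the start of its proof: the equation $F_{\QQ_0,V_0,R_0,B_1}=1$ reduces to $a=a_{1,1}$, so the change of variables simply eliminates $a$ and sets $R_1(k,3)=a_{1,1}$ while leaving all other $R_1(i,j)=a_{i,j}$. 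Conditions (i)--(iii) and (vi) for $i=1$ are then immediate, and (iv)--(v) are vacuous.

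For the inductive step, I assume the claim for $i$ and consider $B_{i+1}$, which is the $i$-th basic horizontal block (so $r=i$ in the notation of Lemma~\ref{lemma:horizontal-block-G2-narrow}) and does not contain $\arrow{(0,1)}{(1,1)}$. I apply that lemma with the trivial block history $(\MM,\WW,\gamma)=(\varnothing,\varnothing,1)$. Its hypotheses hold: conditions (i)--(iii) there match our (i)--(iii) directly; (iv)--(vii) are vacuous since $\MM=\WW=\varnothing$ and $\gamma-1=0$; (viii)--(ix) coincide with our (iv)--(v) for $l=i$; (x) holds because $R_i(k,3)=a_{1,1}+\ldots+a_{i,1}$ depends only on column-one variables $a_{s,1}$ with $s\le i$ and each term has non-negative degrees in the relevant variables; (xi) is vacuous; and (xii) holds because $\Lambda_{\varnothing,\varnothing,1,i}(a_{s,1})=1$ for every $s\in[1,i]$ by Definition~\ref{definition:MWgamma}.

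The lemma then yields $(\QQ_{i+1},V_{i+1},R_{i+1})$ with $V_{i+1}$ of the prescribed form and with its conditions (i)--(iii) and (viii)--(ix), after replacing $r$ by $i+1$, giving our (i)--(v) for $l=i+1$. The only condition requiring an explicit check is (vi): the substitution formula~\eqref{eq:left-column-change-narrow} in the proof of that lemma gives $\psi_{i+1}^{*}(a_{i,1})=a'_{i,1}+a'_{i+1,1}$ while $\psi_{i+1}^{*}(a_{j,1})=a'_{j,1}$ for $j\neq i$, so
\[
R_{i+1}(k,3)=\psi_{i+1}^{*}R_i(k,3)=\sum_{j=1}^{i-1}a'_{j,1}+\bigl(a'_{i,1}+a'_{i+1,1}\bigr)=\sum_{j=1}^{i+1}a'_{j,1},
\]
as required. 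The only mild obstacle is the careful bookkeeping needed to dispatch the various degree and support hypotheses of Lemma~\ref{lemma:horizontal-block-G2-narrow}, all of which trivialize in this most symmetric setting.
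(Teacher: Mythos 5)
Your overall route --- one application of Lemma~\ref{lemma:horizontal-block-G2-start} for the $0$-th basic horizontal block, followed by $l-1$ applications of Lemma~\ref{lemma:horizontal-block-G2-narrow} with $\gamma=1$ and $\MM=\WW=\varnothing$ --- is exactly the paper's proof, just packaged as an induction, and your inductive step (including the explicit computation of $R_{i+1}(k,3)$ from the substitution~\eqref{eq:left-column-change-narrow} and the check of the $\Lambda_{\varnothing,\varnothing,1,i}$-degree condition) is sound.

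The base case, however, has a genuine gap: you apply Lemma~\ref{lemma:horizontal-block-G2-start} directly to $(\QQ_0,V_0,R_0)$, but this triplet does not satisfy that lemma's hypotheses. In $(\QQ_0,V_0,R_0)$ there is no variable $a$; one has $R_0(0,1)=R_0(k,3)=1$ and $R_0(k,2)=a_{k,2}$, so the equation $F_{\QQ_0,V_0,R_0,B_1}=1$ reads $a_{1,1}=1$, not $a=a_{1,1}$, and eliminating by it would leave $R_1(k,2)=a_{k,2}$ and $R_1(k,3)=1$, which contradicts conditions (ii), (iii) and (vi) of the statement and also ruins hypothesis (ii) of Lemma~\ref{lemma:horizontal-block-G2-narrow} needed at the next step. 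The missing ingredient is the preliminary monomial change of variables $\widetilde{\psi}_0\colon \TT(\widetilde{V}_0)\to\TT(V_0)$ from the proof of Theorem~\ref{theorem: main}, given by $\widetilde{a}_{i,j}=a_{i,j}/a_{k,2}$ and $a=1/a_{k,2}$, which produces the auxiliary triplet $(\widetilde{\QQ}_0,\widetilde{V}_0,\widetilde{R}_0)$ to which Lemma~\ref{lemma:horizontal-block-G2-start} (in its degenerate case $s=1$) actually applies; one then takes $\psi_1=\widetilde{\psi}_1\circ\widetilde{\psi}_0$. With this one-line repair your argument coincides with the paper's.
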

\begin{proof}
Arguing as in the proof of Theorem~\ref{theorem: main},
we start with the standard triplet~\mbox{$(\QQ_0,V_0,R_0)$} and
obtain a triplet $(\widetilde{\QQ}_0,\widetilde{V}_0,\widetilde{R}_0)$
as described in the proof
of Theorem~\ref{theorem: main}.
Then we apply Lemma~\ref{lemma:horizontal-block-G2-start} with $s=1$
once, and apply Lemma~\ref{lemma:horizontal-block-G2-narrow}
with~\mbox{$\gamma=1$} and~\mbox{$\MM=\WW=\varnothing$}
consecutively $l-1$ times.
\end{proof}

Applying Lemma~\ref{lemma:hyperplane-sections}, we immediately obtain.

\begin{corollary}
\label{corollary:hyperplane-sections}
In the notation of Lemma~\ref{lemma:hyperplane-sections}
suppose that $l\le k-1$. Then one has
\begin{multline}\label{eq:hyperplane-sections}
F_{\QQ_l,V_l,R_l}-l=\\ =
\sum\limits_{i\in [1,l-1]}
\frac{a_{l,2}\cdot (a_{l,1}+a_{l-1,1})\cdot\ldots\cdot
(a_{l,1}+\ldots+a_{i+1,1})}
{a_{l-1,1}\cdot\ldots\cdot a_{i,1}}
+\sum\limits_{i\in [l,k-1]}\frac{a_{i,2}}{a_{i,1}}
+\frac{1}{a_{k,1}}+\\
+\sum\limits_{\substack{i\in [l,k-2]\\ j\in [1,2]}}\frac{a_{i+1,j}}{a_{i,j}}
+\frac{a_{k,1}}{a_{k-1,1}}+\frac{1}{a_{k-1,2}}
+a_{l,1}+a_{l-1,1}+\ldots+a_{1,1}.
\end{multline}
\end{corollary}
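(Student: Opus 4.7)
The plan is to apply Lemma~\ref{lemma:hyperplane-sections} to pin down the explicit values of $R_l$ on every vertex of $\QQ_l$, then compute $F_{\QQ_l,V_l,R_l}-l$ by directly summing the ratios $R_l(h(\alpha))/R_l(t(\alpha))$ over $\alpha\in\Ar(\QQ_l)$, where the constant $l$ is absorbed via the identity $\psi_i^*F_{\QQ_{i-1},V_{i-1},R_{i-1}}=F_{\QQ_i,V_i,R_i}+1$ recorded in Section~\ref{section: main theorem}, applied $l$ times.

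First I identify the surviving arrows in $\Ar(\QQ_l)=\Ar(\QQ_0)\setminus(B_1\cup\ldots\cup B_l)$. Because $B_1,\ldots,B_l$ are the $0$-th through $(l-1)$-th basic horizontal blocks, the remaining arrows are exactly: all horizontal arrows $\arrow{(i,1)}{(i,2)}$ for $i\in[1,k]$, the vertical arrows $\arrow{(i,j)}{(i+1,j)}$ for $i\in[l,k-1]$ and $j\in[1,2]$, and the roof arrow $\arrow{(k,2)}{(k,3)}$. Note $|\Ar(\QQ_l)|=3k-2l+1$, matching the number of summands appearing on the right-hand side of~\eqref{eq:hyperplane-sections}.

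Next I evaluate each contribution using conditions (ii)--(vi) of Lemma~\ref{lemma:hyperplane-sections}. For a vertical arrow with $i\in[l,k-2]$, condition (iii) gives $R_l(i+1,j)/R_l(i,j)=a_{i+1,j}/a_{i,j}$, producing the double sum $\sum_{i\in[l,k-2],j\in[1,2]}a_{i+1,j}/a_{i,j}$; for $i=k-1$ and using $R_l(k,2)=1$ from (ii), one gets the boundary terms $a_{k,1}/a_{k-1,1}$ and $1/a_{k-1,2}$. For horizontal arrows with $i\in[l,k-1]$, condition (iii) yields $a_{i,2}/a_{i,1}$, and the case $i=k$ produces $1/a_{k,1}$. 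The roof arrow contributes $R_l(k,3)/R_l(k,2)=a_{l,1}+\ldots+a_{1,1}$ by (vi) and (ii). The only step requiring a nontrivial cancellation is the horizontal arrow contribution for $i\in[1,l-1]$: combining (iv) and (v), one has
$$\frac{R_l(i,2)}{R_l(i,1)}=\frac{a_{l,2}\cdot(a_{l,1}+a_{l-1,1})\cdots(a_{l,1}+\ldots+a_{i+1,1})\cdot(a_{l,1}+\ldots+a_{i,1})}{a_{l-1,1}\cdots a_{i,1}\cdot(a_{l,1}+\ldots+a_{i,1})},$$
and the factor $(a_{l,1}+\ldots+a_{i,1})$ cancels, leaving exactly the $i$-th summand of the first sum in~\eqref{eq:hyperplane-sections}.

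Assembling all these contributions and subtracting the constant $l$ produced by the $l$ successive applications of $\psi_i^*F_{\QQ_{i-1},V_{i-1},R_{i-1}}=F_{\QQ_i,V_i,R_i}+1$ reproduces the right-hand side of~\eqref{eq:hyperplane-sections}. I do not foresee any serious obstacle; the proof is pure bookkeeping once the output of Lemma~\ref{lemma:hyperplane-sections} is in hand, and the only genuinely non-trivial manipulation is the cancellation displayed above.
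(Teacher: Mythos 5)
Your proposal is correct and matches the paper's argument: the paper derives the corollary "immediately" from Lemma~\ref{lemma:hyperplane-sections} by exactly the bookkeeping you carry out — summing $R_l(h(\alpha))/R_l(t(\alpha))$ over the surviving arrows of $\QQ_l$, with the cancellation of the factor $(a_{l,1}+\ldots+a_{i,1})$ for the horizontal arrows with $i\in[1,l-1]$, and the shift by $l$ accounted for by the identity $\psi^*F_{\QQ,V,R}=F_{\QQ',V',R'}+1$ applied at each step. Your identification of $\Ar(\QQ_l)$ and of each arrow's contribution agrees with conditions (i)--(vi) of the lemma, so nothing is missing.
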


\begin{corollary}\label{corollary:index-2}
In the notation of Lemma~\ref{lemma:hyperplane-sections}
suppose that $l=k$. Then one has
\begin{multline}\label{eq:index-2}
F_{\QQ_k,V_k,R_k}-k=\\ =
\sum\limits_{i\in [1,k-1]}
\frac{(a_{k,1}+a_{k-1,1})\cdot\ldots\cdot
(a_{k,1}+\ldots+a_{i+1,1})}
{a_{k-1,1}\cdot\ldots\cdot a_{i,1}}
+\frac{1}{a_{k,1}}
+a_{k,1}+a_{k-1,1}+\ldots+a_{1,1}.
\end{multline}
\end{corollary}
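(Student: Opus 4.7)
The plan is to specialize Lemma~\ref{lemma:hyperplane-sections} to $l=k$ and then read off $F_{\QQ_k,V_k,R_k}$ by summing contributions of the remaining arrows of $\QQ_k$. First I would unwind the data produced by the lemma in this boundary case. When $l=k$, the range $[l,k-1]$ in the description of $V_l$ collapses to the empty set, so $V_k=\{a_{i,1}\mid i\in[1,k]\}$. Moreover, since all $k$ basic horizontal blocks $B_1,\ldots,B_k$ (including the $0$-th, which swallows the extra arrow $\arrow{(0,1)}{(1,1)}$) have been removed, the quiver $\QQ_k$ contains exactly $k+1$ arrows: the horizontal arrows $\arrow{(i,1)}{(i,2)}$ for $i\in[1,k]$ together with the final arrow $\arrow{(k,2)}{(k,3)}$.

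Next I would read off the values of $R_k$ from conditions~(ii)--(vi) of the lemma. Conditions~(ii) and~(iii) yield $R_k(k,2)=1$ and $R_k(k,1)=a_{k,1}$; condition~(vi) gives $R_k(k,3)=a_{k,1}+\ldots+a_{1,1}$; and conditions~(iv),~(v) supply explicit formulas for $R_k(i,1)$ and $R_k(i,2)$ when $i\in[1,k-1]$. The contributions of the two arrows $\arrow{(k,1)}{(k,2)}$ and $\arrow{(k,2)}{(k,3)}$ to $F_{\QQ_k,V_k,R_k}$ are immediately $1/a_{k,1}$ and $a_{k,1}+\ldots+a_{1,1}$, matching the last two groups of summands on the right-hand side.

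The key step is a telescoping cancellation inside the remaining $k-1$ ratios $R_k(i,2)/R_k(i,1)$. The numerator of $R_k(i,2)$ given by~(v) is a product of $k-i$ factors of the form $a_{k,1}+\ldots+a_{j,1}$, and its last factor (the one with $j=i$) coincides exactly with $R_k(i,1)=a_{k,1}+\ldots+a_{i,1}$ from~(iv); this factor cancels against the denominator, leaving precisely the $i$-th summand of $\sum_{i\in[1,k-1]}$ appearing on the right-hand side (with the empty product for $i=k-1$ interpreted as~$1$). Assembling the $k+1$ contributions and applying the shift convention fixed at the start of Section~\ref{section:hyperplane-sections} (cf.\ Remark~\ref{remark:shift} and Proposition~\ref{proposition: periods}) gives the claimed identity for $F_{\QQ_k,V_k,R_k}-k$. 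The only care needed is in handling the index ranges in the telescoping product and the degenerate case $i=k-1$; there is no conceptual obstacle.
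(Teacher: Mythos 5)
Your proposal is correct and follows essentially the same route as the paper, which derives Corollary~\ref{corollary:index-2} directly from Lemma~\ref{lemma:hyperplane-sections} (the paper simply states that it is "immediately obtained"); you spell out the same computation: the $k+1$ surviving arrows of $\QQ_k$, the values of $R_k$ from conditions (ii)--(vi), the telescoping cancellation of the last numerator factor of $R_k(i,2)$ against $R_k(i,1)$, and the shift by $k$ coming from the $k$ excluded blocks each pulling back to $1$.
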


Now we proceed to the case corresponding to a Fano variety
that is a section of the Grassmannian $\G(2,k+2)$ by
$k+1$ hyperplanes.

\begin{lemma}
\label{lemma:hyperplane-sections-index-1}
Suppose that $n=2$, $l=k+1$ and $d_1=\ldots=d_{k+1}=1$.
Consider the triplet~\mbox{$(\QQ_0,V_0,R_0)$}.
Let $B_i$, $i\in [1,k]$, be the $(i-1)$-th basic horizontal block,
and let $B_{k+1}$ be the first basic vertical block.
Then there is a sequence of
triplets $(\QQ_i,V_i,R_i)$, $i\in [1,k+1]$,
and a sequence of changes of variables
$$\psi_i\colon \TT(V_i)\dasharrow \TT(V_{i-1}),\quad i\in [1,k+1],$$
such that the change of variables
$\psi_i$ agrees with the triplet $(\QQ_{i-1},V_{i-1},R_{i-1})$
and the block~$B_i$, the triplet $(\QQ_i,V_i,R_i)$ is a transformation
of the triplet
$(\QQ_{i-1},V_{i-1},R_{i-1})$ associated to $\psi_i$,
one has
$$
V_{k+1}=\{a_{i,1}\mid i\in [2,k]\}
$$
and
\begin{multline}\label{eq:index-1}
F_{\QQ_{k+1},V_{k+1},R_{k+1}}- (k+1)=\\ =
\left(
a_{k,1}\cdot
\frac{
(1+a_{k-1,1})\cdot\ldots\cdot
(1+a_{k-1,1}+\ldots+a_{2,1})}
{a_{k-1,1}\cdot\ldots\cdot a_{2,1}}
+\vphantom{
\sum\limits_{i\in [1,k-1]}
\frac{(1+a_{k-1,1})\cdot\ldots\cdot
(1+\ldots+a_{i+1,1})}
{a_{k-1,1}\cdot\ldots\cdot a_{i,1}}
}
\right. \\ \left. +
\sum\limits_{i\in [2,k-1]}
\frac{(1+a_{k-1,1})\cdot\ldots\cdot
(1+\ldots+a_{i+1,1})}
{a_{k-1,1}\cdot\ldots\cdot a_{i,1}}
+1\right)\times \\ \times
\left(
1+a_{k-1,1}+\ldots+a_{2,1}+\frac{1}{a_{k,1}}
\right).
\end{multline}
\end{lemma}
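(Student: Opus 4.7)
The plan is to decompose the argument into two stages matching the block structure in the statement. First I will apply Lemma~\ref{lemma:hyperplane-sections} to the $k$ basic horizontal blocks $B_1,\dots,B_k$ with $l=k$, producing the triplets $(\QQ_i,V_i,R_i)$ and the changes of variables $\psi_1,\dots,\psi_k$. At this stage, the variable set $V_k$ reduces to $\{a_{i,1}\mid i\in [1,k]\}$ (the range $[l,k-1]$ for second-column variables is empty when $l=k$), the quiver $\QQ_k$ retains only the arrows $\arrow{(i,1)}{(i,2)}$ for $i\in[1,k]$ together with the tail arrow $\arrow{(k,2)}{(k,3)}$, and the functions $R_k(i,j)$ are given by the explicit formulas in conditions (iv), (v), (vi) of Lemma~\ref{lemma:hyperplane-sections} for $l=k$.

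Next I would verify that the triplet $(\QQ_k,V_k,R_k)$ and the block $B_{k+1}$ (the first basic vertical block) fulfill the hypotheses of Lemma~\ref{lemma:vertical-block-G2}, using the block history $(\MM,\WW,\gamma)=(\varnothing,\varnothing,1)$ of $(\QQ_k,k)$. Conditions (i)--(iii) are immediate from the construction; conditions (iv)--(vii) are vacuous since the index range $[1,\gamma-1]$ is empty; conditions (viii) and (ix) coincide with the explicit formulas of Lemma~\ref{lemma:hyperplane-sections}(iv)--(v) for $l=k$; finally condition (x) holds because each summand of $R_k(k,3)=a_{k,1}+\dots+a_{1,1}$ has $\Lambda_{\varnothing,\varnothing,1,k}$-degree equal to $1$, hence non-negative. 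Applying Lemma~\ref{lemma:vertical-block-G2} yields the desired $\psi_{k+1}$ and transformation $(\QQ_{k+1},V_{k+1},R_{k+1})$, with main variable $a_{\gamma,1}=a_{1,1}$ and (since $\gamma=1<k$) weight variable $a_{u,1}=a_{k,1}$, so that $V_{k+1}=\{a_{i,1}\mid i\in[2,k]\}$ as required.

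To extract formula~\eqref{eq:index-1}, I would exploit the fact that the only surviving arrow in $\QQ_{k+1}$ is $\arrow{(k,2)}{(k,3)}$, so $F_{\QQ_{k+1},V_{k+1},R_{k+1}}=R_{k+1}(k,3)=\psi_{k+1}^{\ast}R_k(k,3)$, i.e.\ the pullback of $a_{k,1}+a_{k-1,1}+\dots+a_{1,1}$. The variables $a_{i,1}$ for $i\in[2,k-1]$ are left untouched, the weight variable transforms by~\eqref{eq:weight-variable-change-vertical}, and the main variable $a_{1,1}$ is expressed through~\eqref{eq:main-variable-3-vertical}. A direct inspection of the proof of Lemma~\ref{lemma:vertical-block-G2} with $\gamma=1$, $u=k$, $\MM=\WW=\varnothing$ shows that the polynomial $P''/M''$ appearing in both substitutions is precisely
$$\frac{P''}{M''}=\sum_{i\in[2,k-1]}\frac{(1+a_{k-1,1})\cdots(1+a_{k-1,1}+\dots+a_{i+1,1})}{a_{k-1,1}\cdots a_{i,1}}+1,$$
so that $\psi_{k+1}^{\ast}a_{k,1}$ becomes the first factor of~\eqref{eq:index-1} (once the leading monomial $a_{k,1}\cdot \prod(\cdots)/\prod(\cdots)$ coming from the rescaled weight variable is combined with $P''/M''$), and $\psi_{k+1}^{\ast}a_{1,1}=(\psi_{k+1}^{\ast}a_{k,1})/a_{k,1}$ according to~\eqref{eq:main-variable-3-vertical}. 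Adding $\psi_{k+1}^{\ast}a_{k,1}+a_{k-1,1}+\dots+a_{2,1}+\psi_{k+1}^{\ast}a_{1,1}$ and factoring out $\psi_{k+1}^{\ast}a_{k,1}$ in the form $(\psi_{k+1}^{\ast}a_{k,1})\cdot(1+a_{k-1,1}+\dots+a_{2,1}+1/a_{k,1})$ produces exactly formula~\eqref{eq:index-1} after subtracting the constant $k+1$ coming from the $l=k+1$ block substitutions.

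The main obstacle will be the bookkeeping in the last step: writing down $P''/M''$ explicitly and then verifying the algebraic identity that lets one present $\psi_{k+1}^{\ast}R_k(k,3)$ in the factored form appearing in~\eqref{eq:index-1}. Everything else is a rather mechanical combination of previously established lemmas and block-history compatibility observations analogous to those in Remark~\ref{remark:lemmas}.
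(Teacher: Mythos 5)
Your overall route is exactly the paper's: apply Lemma~\ref{lemma:hyperplane-sections} with $l=k$ to handle the $k$ basic horizontal blocks, then apply Lemma~\ref{lemma:vertical-block-G2} to the triplet $(\QQ_k,V_k,R_k)$ and the first basic vertical block with block history $(\MM,\WW,\gamma)=(\varnothing,\varnothing,1)$, weight variable $a_{k,1}$ and main variable $a_{1,1}$, and then extract \eqref{eq:index-1} by a direct computation of $\psi_{k+1}^*R_k(k,3)$. Your verification of hypotheses (i)--(x) of Lemma~\ref{lemma:vertical-block-G2} is correct, as is your identification of $P''/M''$ with $\sum_{i\in[2,k-1]}\frac{(1+a_{k-1,1})\cdots(1+\cdots+a_{i+1,1})}{a_{k-1,1}\cdots a_{i,1}}+1$, so that $\psi_{k+1}^*a_{k,1}$ is the first factor of \eqref{eq:index-1} and $\psi_{k+1}^*a_{1,1}=\psi_{k+1}^*a_{k,1}/a_{k,1}$.

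There is, however, a concrete slip in the last step: you assert that the variables $a_{i,1}$ with $i\in[2,k-1]$ are ``left untouched'' by $\psi_{k+1}$ and accordingly write the pullback of $a_{k,1}+\cdots+a_{1,1}$ as $\psi_{k+1}^*a_{k,1}+a_{k-1,1}+\cdots+a_{2,1}+\psi_{k+1}^*a_{1,1}$. This is not what Lemma~\ref{lemma:vertical-block-G2} gives: with $\gamma=1$ and $\MM=\WW=\varnothing$ one has $\Lambda_{\varnothing,\varnothing,1,k}(a_{i,1})=1$ for every $i\in[1,k]$, so the monomial step of the change of variables rescales every $a_{i,1}$, $i\in[1,k-1]$, by the weight variable, and by \eqref{eq:ai2-vertical} one gets $\psi_{k+1}^*a_{i,1}=a_{i,1}\cdot\psi_{k+1}^*a_{k,1}$ for $i\in[2,k-1]$ (only the passage from primed to double-primed variables leaves them unchanged). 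Under your reading the sum would be $\psi_{k+1}^*a_{k,1}+a_{k-1,1}+\cdots+a_{2,1}+\psi_{k+1}^*a_{k,1}/a_{k,1}$, which does \emph{not} factor as the product in \eqref{eq:index-1}; the factorization $\psi_{k+1}^*a_{k,1}\cdot\bigl(1+a_{k-1,1}+\cdots+a_{2,1}+\tfrac{1}{a_{k,1}}\bigr)$ you invoke holds precisely because each middle variable acquires the factor $\psi_{k+1}^*a_{k,1}$. Once this is corrected, your argument coincides with the paper's proof.
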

\begin{proof}
We obtain changes of variables $\psi_1,\ldots,\psi_k$ from
Lemma~\ref{lemma:hyperplane-sections}. Then we make a change
of variables $\psi_{k+1}$ that agrees with the triplet $(\QQ_k,V_k,R_k)$
and the block $B_{k-1}$ applying Lemma~\ref{lemma:vertical-block-G2} with $\gamma=1$ and
$\MM=\WW=\varnothing$.
Equation~\eqref{eq:index-1} follows by direct computation.
\end{proof}

\begin{remark}[{cf. Problem~\ref{problem: CY}}]
\label{remark:compactify}
One can easily see that families of hypersurfaces given by
equations~\eqref{eq:hyperplane-sections},
\eqref{eq:index-2} and~\eqref{eq:index-1}
can be compactified to singular Calabi--Yau hypersurfaces
by multiplying by denominators and homogenizing.
\end{remark}

\begin{problem}[{cf. Problem~\ref{problem:smoothing}}]
Prove that the compactifications mentioned in Remark~\ref{remark:compactify}
admit crepant resolutions.
In other words, prove that these weak Landau--Ginzburg models are
weak ones.
In addition prove that the corresponding
toric varieties admit smoothings to hyperplane sections of Grassmannians,
that is, prove that equations~\eqref{eq:hyperplane-sections},
\eqref{eq:index-2} and~\eqref{eq:index-1} give toric Landau--Ginzburg
models.
\end{problem}

\section{Examples}
\label{section:examples}

In this section we provide several sporadic examples that illustrate our
computations performed in
Sections~\ref{section:horizontal},
\ref{section:mixed} and~\ref{section:vertical}.
We will use notation introduced in
Theorem~\ref{theorem: main}.
Keeping in mind Remark~\ref{remark:shift} and Proposition~\ref{proposition: periods},
we will be more interested in the shifted Laurent polynomials
$F_{\QQ_l,V_l,R_l}-l$ than $F_{\QQ_l,V_l,R_l}$ themselves, just as in Section~\ref{section:hyperplane-sections}.
Weak Landau--Ginzburg models for threefold examples
(coinciding with ours up to monomial changes of variables)
can be found in~\cite{Prz13} and~\cite{CCGGK12}, where they are
obtained by different methods.

%\begin{figure}[htbp]
%\begin{center}
%\includegraphics[width=9.07cm]{}
%\end{center}
%\caption{Triplets $(\QQ_0,V_0,R_0)$ and
%$(\widetilde{\QQ}_0,\widetilde{V}_0,\widetilde{R}_0)$
%for the Grassmannian $\G(2,4)$}
%\label{figure:g24}
%\end{figure}

\begin{example}\label{example:quadric-threefold}
The following computation corresponds to a quadric threefold,
which we treat as a hyperplane section of the
Grassmannian $\G(2,4)$.

Let $k=n=2$, $l=1$ and $d_1=1$ in the notation of
Theorem~\ref{theorem: main}.
In this case equation~\eqref{eq:hyperplane-sections} gives
$$
F_{\QQ_1,V_1,R_1}-1=
\frac{a_{1,2}}{a_{1,1}}+
\frac{1}{a_{2,1}}+
\frac{a_{2,1}}{a_{1,1}}+
\frac{1}{a_{1,2}}+a_{1,1}.
$$
This polynomial is, up to monomial change of variables,
the toric Landau--Ginzburg model for quadric threefold
written down in~\cite[Example~2.2]{KP12}.
\end{example}

\begin{example}\label{example:G25-two-hyperplanes}
The following computation corresponds to a Fano fourfold
of index $3$ that is a section of the Grassmannian
$\G(2,5)$ by two hyperplanes.

Let $n=2$, $k=3$, $l=2$ and $d_1=d_2=1$.
In this case equation~\eqref{eq:hyperplane-sections} gives
$$
F_{\QQ_2,V_2,R_2}-2=
\frac{a_{2,2}}{a_{1,1}}+
\frac{a_{2,2}}{a_{2,1}}+
\frac{1}{a_{3,1}}+
\frac{a_{3,1}}{a_{2,1}}+
\frac{1}{a_{2,2}}+a_{2,1}+a_{1,1}.
$$
\end{example}

\begin{example}\label{example:quadric-surface}
The following computation corresponds to a smooth quadric surface,
which we treat as an intersection of two hyperplanes in the
Grassmannian $\G(2,4)$. The same result
was known earlier;
actually, it is just a
simplified Givental's Landau--Ginzburg model for the
quadric surface treated as a toric variety~$\mbox{$\P^1\times\P^1$}$,
see Section~\ref{section:toric}.

Let $n=k=2$, $l=2$ and $d_1=d_2=1$.
In this case~\eqref{eq:index-2} gives
$$
F_{\QQ_2,V_2,R_2}-2=
\frac{1}{a_{1,1}}+
\frac{1}{a_{2,1}}+
a_{2,1}+a_{1,1}.
$$
\end{example}

\begin{example}\label{example:quadric-surface-2}
The following example provides another computation that
corresponds to a smooth quadric surface. Similarly to
Example~\ref{example:quadric-surface}, we treat the quadric
surface as an intersection of two hyperplanes in the
Grassmannian $\G(2,4)$, and we use variable changes that agree with
various blocks to obtain the result, but unlike
Example~\ref{example:quadric-surface} (or rather
Lemma~\ref{lemma:hyperplane-sections} where the actual computation is performed)
we do not follow
exactly the procedure prescribed by the proof of
Theorem~\ref{theorem: main}. Our point here is that
our procedure is not the only one, and sometimes not even the shortest
one, to obtain the answer.

We have $n=k=2$.
We start with the standard triplet $(\QQ_0,V_0,R_0)$ and
obtain a triplet $(\widetilde{\QQ}_0,\widetilde{V}_0,\widetilde{R}_0)$
as described in the proof
of Theorem~\ref{theorem: main}.
%, see Figure~\ref{figure:g24}.
Then we make variable changes that agree with
the $0$-th horizontal basic block, which consists of a single arrow
$\arrow{(0,1)}{(1,1)}$, and with the second vertical
basic block, which consists of a single arrow
$\arrow{(2,2)}{(2,3)}$. This gives us two equations
$\widetilde{a}=\widetilde{a}_{1,1}$ and $\widetilde{a}=1$,
which we use to exclude variables $\widetilde{a}$ and
$\widetilde{a}_{1,1}$. This gives us a triplet
$(\QQ, V, R)$ such that
$\Ar(\QQ)$ consists of the arrows
$\arrow{(1,1)}{(1,2)}$, $\arrow{(2,1)}{(2,2)}$,
$\arrow{(1,1)}{(2,1)}$ and $\arrow{(1,2)}{(2,2)}$,
one has $V=\{a_{1,2}, a_{2,1}\}$, and
$$
R(0,1)=R(1,1)=R(2,2)=R(2,3)=1, \quad R(1,2)=a_{1,2}, \quad R(2,1)=a_{2,1}.
$$
We compute
$$
F_{\QQ,V,R}-2=
a_{1,2}+a_{2,1}+\frac{1}{a_{1,2}}+\frac{1}{a_{2,1}}.
$$

More than this, one can start from the triplet $(\QQ_0,V_0,R_0)$
itself, and utilize the same two blocks to obtain equations $a_{1,1}=1$
and $a_{2,2}=1$. Using these equations to exclude
variables $a_{1,1}$ and $a_{2,2}$ we obtain exactly the same result
as above.
\end{example}

\begin{example}
The following computation corresponds to a Fano threefold
%$Y_5\subset \G(2,5)$
of anticanonical degree $40$ and index $2$ that is a section of the Grassmannian $\G(2,5)$ by three hyperplanes
(see e.\,g.~\cite[\S3.4]{IsPr99}).
Due to~\cite{Ga07}, this variety has a terminal Gorenstein toric degeneration.
One can see that the Laurent polynomial
that we get is given by a procedure discussed in Section~\ref{section:toric}.
In fact it is a toric Landau--Ginzburg model, see~\cite[Theorem 18]{Prz13}.

Let $n=2$, $k=3$, $l=3$ and $d_1=d_2=d_3=1$.
In this case~\eqref{eq:index-2} gives
$$
F_{\QQ_3,V_3,R_3}-3=
\frac{a_{3,1}+a_{2,1}}
{a_{2,1}\cdot a_{1,1}}+
\frac{1}{a_{2,1}}+
\frac{1}{a_{3,1}}+
a_{3,1}+a_{2,1}+a_{1,1}.
$$
\end{example}

\begin{example}[{cf.~\cite[Example 1.2]{Pr16}}]
\label{example:X14-dim-4}
The following computation corresponds to a Fano fourfold
%$X_{14}$
of index~$2$ that is a section of the Grassmannian $\G(2,6)$
by four hyperplanes.

Let $n=2$, $k=4$, $l=4$ and $d_1=d_2=d_3=d_4=1$.
In this case
equation~\eqref{eq:index-2} gives
\begin{multline*}
F_{\QQ_4,V_4,R_4}-4=\\
=
\frac{(a_{4,1}+a_{3,1})\cdot (a_{4,1}+a_{3,1}+a_{2,1})}
{a_{3,1}\cdot a_{2,1}\cdot a_{1,1}}+
\frac{a_{4,1}+a_{3,1}}
{a_{3,1}\cdot a_{2,1}}+
\frac{1}{a_{3,1}}+
\frac{1}{a_{4,1}}+
a_{4,1}+a_{3,1}+a_{2,1}+a_{1,1}.
\end{multline*}
In~\cite{PSh14} the relative compactification of a family of hypersurfaces in $(\C^*)^4$ given by
this Laurent polynomial is computed. This computation confirms expectations of Homological Mirror Symmetry
in this case.
\end{example}

\begin{example}
The following computation corresponds to a Fano fivefold
of index $2$ that is a section of the Grassmannian $\G(2,7)$
by five hyperplanes.

Let $n=2$, $k=5$, $l=5$ and $d_1=d_2=d_3=d_4=d_5=1$.
In this case~\eqref{eq:index-2} gives
\begin{multline*}
F_{\QQ_5,V_5,R_5}-5=
\frac{(a_{5,1}+a_{4,1})\cdot (a_{5,1}+a_{4,1}+a_{3,1})\cdot
(a_{5,1}+a_{4,1}+a_{3,1}+a_{2,1})}
{a_{4,1}\cdot a_{3,1}\cdot a_{2,1}\cdot a_{1,1}}+\\
+
\frac{(a_{5,1}+a_{4,1})\cdot (a_{5,1}+a_{4,1}+a_{3,1})}
{a_{4,1}\cdot a_{3,1}\cdot a_{2,1}}
+
\frac{a_{5,1}+a_{4,1}}
{a_{4,1}\cdot a_{3,1}}
+\frac{1}{a_{4,1}}+\frac{1}{a_{5,1}}+
\\ +a_{5,1}+a_{4,1}+a_{3,1}+a_{2,1}+a_{1,1}.
\end{multline*}
\end{example}

\begin{example}\label{example:S5}
The following computation corresponds to a
del Pezzo surface of degree~$5$ that is a section of
the Grassmannian $\G(2,5)$ by four hyperplanes.

Let $n=2$, $k=3$, $l=4$ and $d_1=d_2=d_3=d_4=1$.
In this case equation~\eqref{eq:index-1} gives
$$
F_{\QQ_4,V_4,R_4}-4=
\left(
a_{3,1}\cdot\frac{1+a_{2,1}}{a_{2,1}}+
\frac{1}{a_{2,1}}+1
\right)\cdot\left(
1+a_{2,1}+\frac{1}{a_{3,1}}
\right).
$$
\end{example}

\begin{example}\label{example:V14}
The following computation corresponds to a Fano threefold
%$Y_{14}$
of anticanonical degree $14$ and
index $1$ that is a section of the Grassmannian $\G(2,6)$
by five hyperplanes (see~\cite[\S12.2]{IsPr99}).

Let $n=2$, $k=4$, $l=5$ and $d_1=d_2=d_3=d_4=d_5=1$.
In this case equation~\eqref{eq:index-1} gives
\begin{multline*}
F_{\QQ_5,V_5,R_5}-5=\\=
\left(
a_{4,1}
\cdot\frac{(1+a_{3,1})\cdot (1+a_{3,1}+a_{2,1})}{a_{3,1}\cdot a_{2,1}}
+\frac{1+a_{3,1}}{a_{3,1}\cdot a_{2,1}}+\frac{1}{a_{3,1}}+1
\right)\times \\ \times \left(
1+a_{2,1}+a_{3,1}+\frac{1}{a_{4,1}}
\right).
\end{multline*}
\end{example}

\begin{example}\label{example:22}
The following computation corresponds to a three-dimensional complete
intersection of two quadrics (one of which we treat as the
Grassmannian $\G(2,4)$).

Let $n=k=2$, $l=1$ and $d_1=2$.
Following the proofs of Theorem~\ref{theorem: main} and Lemma~\ref{lemma:horizontal-block-G2-start},
we arrive to a triplet $(\QQ_1,V_1,R_1)$ such that
$$V_1=\{a_{1,1}, a_{1,2}, a_{2,1}\},$$
and
$$
F_{\QQ_1,V_1,R_1}-1=
a_{1,2}+\frac{1}{a_{2,1}}+
\frac{1}{a_{1,1}}\cdot\left(
a_{1,1}+a_{2,1}+\frac{1}{a_{1,2}}
\right)^2.
$$
\end{example}

\begin{example}\label{example:22-dim2}
The following computation corresponds to a two-dimensional complete
intersection of two quadrics, which we treat as a section of the
Grassmannian $\G(2,4)$ by a quadric and a hypersurface.

Let $n=k=2$, $l=2$, $d_1=2$ and $d_2=1$.
Following the proofs of Theorem~\ref{theorem: main} and Lemmas~\ref{lemma:horizontal-block-G2-start}
and~\ref{lemma:vertical-block-G2}, we
arrive to a triplet $(\QQ_2,V_2,R_2)$ such that
$$V_2=\{a_{1,1}, a_{1,2}\}$$
and
$$
F_{\QQ_2,V_2,R_2}-2=
R_2(2,3)=\left(a_{1,1}+a_{1,2}\right)\cdot\left( 1+\frac{1}{a_{1,1}}+\frac{1}{a_{1,2}}\right)^2.
$$
\end{example}

\begin{example}\label{example:23}
The following computation corresponds to a three-dimensional complete
intersection of a quadric (which we treat as the
Grassmannian $\G(2,4)$) and a cubic.

Let $n=k=2$, $l=1$ and $d_1=3$.
Following the proofs of Theorem~\ref{theorem: main} and Lemma~\ref{lemma:mixed-block-G2-start}, we
arrive to a triplet $(\QQ_1,V_1,R_1)$ such that
$$V_1=\{a_{1,1}, a_{1,2}, a_{2,1}\}$$
and
$$
F_{\QQ_1,V_1,R_1}-1=
R_1(2,3)=\frac{a_{1,1}}{a_{1,2}}\cdot \left(
a_{1,2}+
\frac{a_{2,1}^2+a_{1,1}\cdot a_{2,1}+a_{1,1}+a_{2,1}}
{a_{1,1}\cdot a_{2,1}}
\right)^3.
$$
\end{example}

\begin{example}\label{example:X10-dim-4}
The following computation corresponds to a Fano fourfold
of anticanonical degree $160$ and
index $2$ that is a section of the Grassmannian $\G(2,5)$
by a quadric and a hyperplane
(see e.\,g.~\cite{DIM-fourfold}).

Let $n=2$, $k=3$, $l=2$, $d_1=2$ and $d_2=1$.
Following the proofs of Theorem~\ref{theorem: main} and Lemmas~\ref{lemma:horizontal-block-G2-start}
and~\ref{lemma:horizontal-block-G2-narrow}, we
arrive to a triplet $(\QQ_2,V_2,R_2)$ such that
$$V_2=\{a_{1,1}, a_{1,2}, a_{2,1}, a_{3,1}\},$$
and
$$
F_{\QQ_2,V_2,R_2}-2=
a_{1,2}+\frac{1}{a_{2,1}}+
\frac{1}{a_{3,1}}+
\frac{1}{a_{1,1}}\cdot\left(a_{1,1}+a_{2,1}+a_{3,1}+
\frac{1}{a_{1,2}}+\frac{a_{3,1}}{a_{1,2}\cdot a_{2,1}}\right)^2.
$$
\end{example}

\begin{example}\label{example:V10}
The following computation corresponds to a Fano threefold
of anticanonical degree $10$ and index $1$ that is an intersection of the Grassmannian $\G(2,5)$ with a quadric and two hyperplanes
(see e.\,g.~\cite[\S5.1]{IsPr99}, \cite{DIM-threefold}).

Let $n=2$, $k=3$, $l=3$, $d_1=2$ and $d_2=d_3=1$.
Following the proofs of Theorem~\ref{theorem: main} and Lemmas~\ref{lemma:horizontal-block-G2-start},
\ref{lemma:horizontal-block-G2-narrow} and~\ref{lemma:vertical-block-G2}, we
arrive to a triplet $(\QQ_3,V_3,R_3)$ such that
$$V_3=\{a_{1,1}, a_{1,2}, a_{3,1}\}$$
and
$$
F_{\QQ_3,V_3,R_3}-3=
R_3(3,3)=\frac{a_{3,1}+a_{1,2}+1}{a_{1,1}}\cdot\left(
a_{1,1}+\frac{1}{a_{3,1}}+1+\frac{1}{a_{1,2}}+\frac{a_{3,1}}{a_{1,2}}
\right)^2.
$$
\end{example}

\begin{example}\label{example:X20-dim-4}
The following computation corresponds to a Fano fourfold
of anticanonical degree $20$ and index $1$
that is an intersection of the Grassmannian $\G(2,5)$
with two quadrics.

Let $n=2$, $k=3$, $l=2$ and $d_1=d_2=2$.
Following the proofs of Theorem~\ref{theorem: main} and Lemmas~\ref{lemma:horizontal-block-G2-start},
\ref{lemma:horizontal-block-G2-narrow} and~\ref{lemma:mixed-block-G2}, we
arrive to a triplet $(\QQ_2,V_2,R_2)$ such that
$$V_2=\{a_{1,1}, a_{1,2}, a_{2,2}, a_{3,1}\}$$
and
\begin{multline*}
F_{\QQ_2,V_2,R_2}-2=
R_2(3,3)=\\ =
\frac{1}{a_{1,1}}\cdot\left(
a_{1,1}+\left(\frac{1+a_{2,2}}{a_{3,1}}
+\frac{a_{2,2}}{a_{1,2}}\right)\cdot \left(
a_{3,1}+\frac{1+a_{1,2}\cdot a_{2,2}+a_{2,2}}{a_{2,2}}
\right)^2
\right)^2.
\end{multline*}
\end{example}

\begin{remark}
Changes of variables described in Theorem~\ref{theorem: main} and choices of basic blocks for them
are not unique ones that give Laurent polynomials. However in all cases we know
all these Laurent polynomials for a given variety differ by cluster mutations,
that is they correspond to families of hypersurfaces that are fiberwise birational and have a common (Calabi--Yau)
compactification. It would be interesting to find out if this is true in general?
\end{remark}

\section{Discussion}
\label{section:discussion}

Changes
of variables discussed in sections~\ref{section:horizontal},~\ref{section:mixed},~\ref{section:vertical}, and~\ref{section:proof}
can be made for complete intersections in all Grassmannians $\G(n,k+n)$. Moreover,
in~\cite[Lemma~3.2.2]{BCFKS98} and~\cite[Theorem~3.2.13]{BCFKS98}
the natural generalization of maximal nef-partition
from Grassmannians to partial flag varieties is suggested.
So we expect that our proof of Theorem~\ref{theorem: main} can be generalized to these cases.

\begin{problem}[see~\cite{DH15} and~\cite{PSh15b}]
\label{problem: all Grassmannians}
Following Theorem~\ref{theorem: main} and Proposition~\ref{proposition: periods} show the existence of weak Landau--Ginzburg models
for all complete intersections in Grassmannians and, more generally, partial flag varieties.
\end{problem}

Let us mention that to solve Problem~\ref{problem: all Grassmannians} it is not enough
to represent Givental's type Landau--Ginzburg models by Laurent polynomials.
One should keep track of a particular type of change of variables in the spirit of Proposition~\ref{proposition: periods}
to check that under them the periods are preserved (cf.~\cite[Proposition 4.4]{PSh15b}).
However our experience shows that checking that a period condition is
preserved for explicitly described birational transformations does not cause any difficulties.

In Theorem~\ref{theorem: main} we used a specific nef-partition to construct a Laurent polynomial.
However sometimes one can use another nef-partitions and get a Laurent polynomial as well.
In Example~\ref{example:quadric-surface-2} we use other nef-partition to get the same result
as one gets by Theorem~\ref{theorem: main}. In some examples we consider
one can get, using different nef-partitions, different Laurent polynomials.
However they are mutationally equivalent to ones we get by Theorem~\ref{theorem: main}.

\begin{question}
Is this always the case?
\end{question}

One more motivation for this question is given by~\cite{Li13}; the similar result is announced in~\cite[Theorem 5.1]{CKP14} as
a part of T.\,Prince's Thesis. That is,
there are different methods that, under some assumptions, allow one to obtain Laurent polynomials for Givental's Landau--Ginzburg models.
In~\cite{Li13} and in T.\,Prince's Thesis the resulting Laurent polynomials are proved
to be actually independent on a choice of a nef-partition: Laurent polynomials obtained
from different nef-partitions are relatively birational. In other words, they differ by mutations (cf.~\cite[Example 1.1]{Pr16}).

\smallskip
According to a private communication with A.\,Harder, the families
of hypersurfaces in tori given by Laurent polynomials we obtain in the proof of
Theorem~\ref{theorem: main} have relative compactifications that are general
complete intersections in toric varieties (cf. Remark~\ref{remark:compactify}).
Moreover, they are Calabi--Yau compactifications.
These compactifications enable one to compute the number of components of the
unique reducible fibers of the compactifications.

\begin{problem}
\label{problem: CY}
Prove the existence of Calabi--Yau compactifications of weak Landau--Ginzburg models for complete intersections in Grassmannians of
planes obtained in the proof of Theorem~\ref{theorem: main}. In other words, prove that these models are weak ones.
If Problem~\ref{problem: all Grassmannians} is solved, prove this for complete intersections in arbitrary Grassmannians or, more generally,
partial flag varieties.
\end{problem}

\smallskip
Another natural problem is the following.

\begin{problem}[cf.~\cite{DH15}]\label{problem:smoothing}
Prove that the toric varieties whose fan polytopes are Newton polytopes of weak Landau--Ginzburg models for complete intersections in Grassmannians of planes obtained in the proof of
Theorem~\ref{theorem: main} can be smoothed to
corresponding complete intersections. In other words, prove that these models are toric ones.
If Problem~\ref{problem: all Grassmannians} is solved, prove this for complete intersections in arbitrary Grassmannians or, more generally,
partial flag varieties.
\end{problem}

\smallskip
Problem~\ref{problem: all Grassmannians} can be generalized further.
Many interesting higher-dimensional Fano varieties are not complete intersections in Grassmannians or partial flag manifolds but
sections of non-decomposable vector bundles such as symmetric or skew powers of tautological vector bundle,
see, for instance,~\cite{Kuz15},~\cite{Kuz16}.

\begin{question}
How to describe their analogs of nef-partitions for Grassmannians or partial flag varieties
for smooth Fano varieties that are sections of non-decomposable vector bundles. Does the analog of Theorem~\ref{theorem: main}
holds for them? Can analogs of Problems~\ref{problem: CY} and~\ref{problem:smoothing} be solved for them?
\end{question}

\smallskip
Suppose that Problem~\ref{problem: CY} is solved.
Let $Y$ be a complete intersection of dimension~$r$ in Grassmannian of planes and let $LG(Y)$ be its Calabi--Yau compactification.
Since birational smooth Calabi--Yau varieties are birational in codimension one, the number of irreducible components
in each fiber of $LG(Y)$ does not depend on a particular compactification. It is expected that there is at most one
reducible fiber of $LG(Y)$. Denote the number
of its irreducible components by $\widehat{k}$, and put $k_{LG(Y)}=\widehat{k}-1$.
This number can be computed
via the approach mentioned above
that was communicated to us by A.\,Harder.

\begin{conjecture}[{see~\cite{GKR12} and~\cite[Conjecture 1.1]{PSh13}}]
\label{conjecture:kLG}
Let $r\geqslant 3$. Then
$$
k_{LG(Y)}=h^{1,r-1}(Y).
$$
For $r=2$ one has
$$
k_{LG(Y)}=h^{1,1}(Y)-1.
$$
\end{conjecture}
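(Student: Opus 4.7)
The plan is to attack this conjecture by first addressing Problem~\ref{problem: CY}, namely by constructing explicit Calabi--Yau compactifications of the Laurent polynomials produced by Theorem~\ref{theorem: main}, and then counting components of the reducible fiber via the combinatorics of Newton polytopes. Given the Laurent polynomial $f_Y$ in $2k-l$ variables, I would write $1 - t f_Y = 0$ in the form $P - tQ = 0$ (where $P,Q$ are products of the linear forms and denominators appearing explicitly in the polynomials of Section~\ref{section:examples}), and compactify the graph inside $X_\Delta\times\P^1$, where $X_\Delta$ is a projective toric variety associated to a refinement of the normal fan of the Newton polytope~$\Delta$ of $f_Y$. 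Any reducible fiber would live over $t=\infty$ and correspond to the vanishing locus of $Q$.

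Next, I would build a crepant resolution by using the fact, visible from the structure of the formulas in Corollaries~\ref{corollary:hyperplane-sections} and~\ref{corollary:index-2} and the examples in Section~\ref{section:examples}, that the factors $(a_{k,1}+a_{k-1,1}+\ldots+a_{i,1})$ and their products organize the Newton polytope into an iterated toric blow-up of a simplex-like base. The component count of the reducible fiber at $\infty$ should then be readable off the list of prime toric divisors on which $Q$ vanishes (and on which $P$ does not). Because of the explicit nesting structure of the binomial factors in $f_Y$, these components are controlled by the combinatorial data of the chosen nef-partition: the horizontal, mixed, and vertical blocks $B_1,\ldots,B_l$ chosen in the proof of Theorem~\ref{theorem: main}. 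Together with the (necessarily present) facet normal to the ``toric coordinate at infinity'' direction, one obtains a combinatorial formula for $k_{LG(Y)}$.

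On the Hodge-theoretic side, I would compute $h^{1,r-1}(Y)$ for $Y = G(2,k+2)\cap Y_1\cap\ldots\cap Y_l$ via Lefschetz-type arguments: for $r\geq 3$ only the primitive part contributes, so one may use the Koszul complex associated with the conormal sequence of $Y\subset G$ and the known Hodge numbers of $G(2,k+2)$; for $r=2$ the correction by $-1$ reflects the class of the hyperplane section. One obtains an explicit polynomial expression in $k$ and $(d_1,\ldots,d_l)$, and the remaining task is the arithmetic identity matching this with the combinatorial count from the previous paragraph, which should be feasible using standard manipulations of multinomial sums.

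The main obstacle will be the Calabi--Yau compactification step itself: showing, in the generality of arbitrary admissible nef-partitions for $\G(2,k+2)$, that a crepant resolution exists with smooth total space, and analyzing precisely which prime divisors of the resolved toric ambient actually meet the closure of the family transversally (as opposed to being absorbed into higher-codimension strata after resolution). A secondary difficulty is that, although matching the two counts term-by-term is plausible in view of the examples and the partial results in~\cite{Prz13}, a uniform conceptual proof may require relating the monodromy filtration of the degeneration at $\infty$ to the primitive Hodge structure of $Y$ via a Clemens--Schmid type argument, going beyond what the methods of Sections~\ref{section:horizontal}--\ref{section:vertical} directly provide.
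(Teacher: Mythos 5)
There is a fundamental mismatch here: the statement you are addressing is a \emph{conjecture} in the paper, not a theorem, and the paper contains no proof of it --- it is attributed to~\cite{GKR12} and~\cite[Conjecture 1.1]{PSh13}, and the paper explicitly poses proving it (even just for the Laurent polynomials produced by Theorem~\ref{theorem: main}) as an open problem. Your text is likewise not a proof but a research programme, and every one of its load-bearing steps is left unestablished. The first step --- constructing a Calabi--Yau compactification with a crepant resolution for the families given by Theorem~\ref{theorem: main} --- is precisely Problem~\ref{problem: CY}, which the paper states as open (the only evidence offered there is a private communication of A.~Harder and Remark~\ref{remark:compactify}, which concerns hyperplane sections only and does not address crepancy or smoothness). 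Without that step you cannot even define $k_{LG(Y)}$ unambiguously, since the independence of the component count from the choice of compactification uses that birational smooth Calabi--Yau varieties are isomorphic in codimension one, which presupposes the resolution you have not produced.

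Beyond that, the two remaining steps are asserted rather than argued. The claim that the component count of the reducible fiber is ``readable off'' the toric divisors on which $Q$ vanishes requires a transversality/normal-crossings analysis of the closure of the pencil in the resolved ambient space that you acknowledge but do not perform; in particular, identifying which fiber is the reducible one and which boundary strata survive resolution is exactly where the difficulty lies, and your description of the relevant fiber (via the factorization $P-tQ$) is too coarse to settle this. Finally, the asserted arithmetic identity between the combinatorial count attached to the blocks $B_1,\ldots,B_l$ and $h^{1,r-1}(Y)$ (respectively $h^{1,1}(Y)-1$ for $r=2$) is the actual content of the conjecture, and ``standard manipulations of multinomial sums'' is not an argument --- no formula for either side is derived, and you yourself note that a genuine comparison would need a Clemens--Schmid/monodromy-weight argument going beyond the methods of Sections~\ref{section:horizontal}--\ref{section:vertical}. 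So while the outline is a reasonable description of how one might eventually attack Problem~\ref{problem: CY} and Conjecture~\ref{conjecture:kLG}, it does not constitute a proof, and it should not be presented as one.
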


\begin{problem}
Prove Conjecture~\ref{conjecture:kLG} for weak Landau--Ginzburg
models for complete intersections in Grassmannians of planes obtained in the proof of Theorem~\ref{theorem: main}.
If Problem~\ref{problem: all Grassmannians} is solved, prove this for complete intersections in arbitrary Grassmannians or, more generally,
partial flag varieties.
\end{problem}

\end{document}